 \numberwithin{equation}{section}
\newtheorem{Theorem}{Theorem}[section]
\newtheorem{Lemma}[Theorem]{Lemma}
\newtheorem{Proposition}[Theorem]{Proposition}
\newtheorem{Remark}[Theorem]{Remark}
\def\Re{\mathop{\mathrm{Re}}}
\title{ Large amplitude radially symmetric spots and gaps in a dryland ecosystem model}
\author{Eleanor Byrnes\thanks{Department of Applied Mathematics, University of Washington, Seattle, WA, USA}\and Paul Carter\thanks{Department of Mathematics, University of California, Irvine, CA, USA}\and Arjen Doelman\thanks{Mathematisch Instituut, Universiteit Leiden, Leiden, NL} \and Lily Liu\thanks{University of Chicago, Chicago, IL, USA }}
\begin{document}
\maketitle

\begin{abstract}
We construct far-from-onset radially symmetric spot and gap solutions in a two-component dryland ecosystem model of vegetation pattern formation on flat terrain, using spatial dynamics and geometric singular perturbation theory. We draw connections between the geometry of the spot and gap solutions with that of traveling and stationary front solutions in the same model. In particular, we demonstrate the instability of spots of large radius by deriving an asymptotic relationship between a critical eigenvalue associated with the spot and a coefficient which encodes the sideband instability of a nearby stationary front. Furthermore, we demonstrate that spots are unstable to a range of perturbations of intermediate wavelength in the angular direction, provided the spot radius is not too small. Our results are accompanied by numerical simulations and spectral computations.
\end{abstract}
%\setcounter{tocdepth}{2}
%\tableofcontents

\section{Introduction}
The phenomenon of vegetation pattern formation in dryland ecosystems has attracted attention in the last several decades as a mechanism of ecological resilience. While frequently considered to be an early warning sign for desertification~\cite{gowda2018signatures,gowda2014transitions,may1977thresholds, noy1975stability, rietkerk2004self, rietkerk2008regular, rietkerk1997site}, the formation of localized vegetation patches or patterns has also been viewed as a means of evading such critical transitions~\cite{rietkerk2021evasion}. The interaction of infiltration feedback mechanisms and competition for water resources results in the formation of vegetation patches~\cite{macfadyen1950vegetation,rietkerk2002self, schlesinger1990biological,von2001diversity,wilcox2003ecohydrology}. On sloped terrain, one observes vegetation stripes, or bands, aligned perpendicular to the slope, while on flat ground spots, gaps, or disorganized labyrinth patterns are prevalent~\cite{barbier2014case,deblauwe2012determinants, deblauwe2011environmental, gandhi2018influence, ludwig2005vegetation,valentin1999soil}. Spots, gaps, rings or other radially symmetric patterns (sometimes called ``fairy circles") have been observed extensively in drylands in Australia and Africa~\cite{getzin2016discovery, meron2018patterns, ravi2017ecohydrological} (and other ecosystems, such as submarine seascapes~\cite{ruiz2017fairy}), and have served as the focus of many studies of self-organization in ecosystems.

Vegetation pattern formation is frequently modeled by multi-component reaction diffusion systems. In such models, there is a well-developed theory of spot and stripe pattern formation near the onset of Turing instabilities~\cite{gowda2014transitions, gowda2016assessing, siero2015striped}. However, far less is known analytically concerning large amplitude or far-from-onset planar vegetation patterns. A number of studies have considered existence and stability properties of banded vegetation~\cite{BCD,CD,doelman2002homoclinic, sewalt2017spatially}, as well as desertification fronts~\cite{CDLOR,fernandez2019front}, but less is known concerning far-from-onset radially symmetric vegetation patches. Prior work has considered small amplitude radial solutions~\cite{hill2021existence} and $1$D simplifications~\cite{jaibi2020existence}. However, to our knowledge, no rigorous studies exist concerning large amplitude radially symmetric vegetation patches in a dryland ecosystem model. 

We consider the model introduced in~\cite{BCD} 
\begin{align}
\begin{split}\label{eq:modifiedKlausmeier}
U_t &= \Delta U+a-U-UV^2\\
V_t &= \delta^2\Delta V -mV+UV^2(1-bV),
\end{split}
\end{align}
which is a modification of the dryland ecosystem model originally proposed by Klausmeier~\cite{klausmeier1999regular}. This model also coincides with that studied in~\cite{eigentler2021species}, in the case of a single species. Here $U$ represents water availability, $V$ represents vegetation density, and the parameters $a,m,b$ are positive and represent rainfall, mortality, and inverse of soil carrying capacity, respectively. The diffusion coefficient $\delta^2$ represents the ratio of timescales of diffusion of water vs. vegetation. Here, we assume that water diffuses much faster than vegetation, so that $0<\delta\ll1$ is a small parameter \cite{rietkerk2008regular}.  {We note that Klausmeier's model originally had $b=0$. While a singular perturbation analysis of this case is possible (see, e.g.~\cite{CD}), this limit is highly singular, requiring several rescalings and blow up techniques to account for passage near a non-hyperbolic slow manifold. Here, we therefore focus on the case $b>0$.}

We are interested in the formation of radially symmetric vegetation spots (localized vegetation patches surrounded by bare soil) and gaps (localized regions of bare soil in an otherwise uniformly vegetated state) in~\eqref{eq:modifiedKlausmeier}. Exploiting the small parameter $\delta\ll1$, we will use geometric singular perturbation methods to construct radially symmetric solutions through a spatial dynamics approach in the radial coordinate. Our approach is similar to that in~\cite{vHS}, in which the authors construct far-from-onset spot solutions in a 3-component FitzHugh--Nagumo model. However, the nonlinearities in~\eqref{eq:modifiedKlausmeier} introduce complications in the analysis due to the fact that the reduced flow on the resulting slow manifolds is no longer linear as in the case in~\cite{vHS}. However, we will show that radial solutions can still be constructed in~\eqref{eq:modifiedKlausmeier}. 

\begin{Remark}
\label{r:spikesandmore}
In the setting of classical models like Gray-Scott, Gierer-Meinhardt and Schnakenberg, the existence, stability and interactions of radially symmetric localized `spikes' has been studied -- see for instance ~\cite{chen2011stability,kolokolnikov2009spot,wei2013mathematical} and the references therein. However, these patterns differ from the ones considered here since they have a homoclinic nature: unlike the present gaps and spots, the regions in which these spikes are not close to the background states are asymptotically small. The patterns considered here can be seen as two-dimensional (radially symmetric) versions of the one-dimensional `mesa patterns' studied in \cite{kolokolnikov2007self} and in ~\cite{jaibi2020existence} in the setting of vegetation patterns. Like in the present model, and unlike in \cite{vHS,vHS2}, the `slow flows' (for the spatial dynamics) considered in these papers are nonlinear -- which is typically the case for ecosystem models \cite{doelman2022slow}.   
\end{Remark}

The system~\eqref{eq:modifiedKlausmeier} admits (up to) three homogeneous steady states: the desert state $(U,V)=(U_0,V_0):=(a,0)$ and if $\frac{a}{m}>2(b+\sqrt{1+b^2})$, there are two additional vegetated steady states $(U,V)=(U_{1,2},V_{1,2})$ where
\begin{align}\label{eq:steadystates}
U_{1,2}&=m\left(\frac{a}{m}-\frac{V_{1,2}}{1-bV_{1,2}}\right), \qquad V_{1,2}=\frac{\frac{a}{m}\mp \sqrt{\left(\frac{a}{m}\right)^2-4\left(1+\frac{a}{m}b\right)}}{2\left(1+\frac{a}{m}b\right)}
\end{align}
which coincide at the critical value $\frac{a}{m}=2(b+\sqrt{1+b^2})$.

We search for stationary solutions, which are radially symmetric, so that $(U,V)(x,y,t) = (u,v)(r), r\in[0,\infty)$, and $\Delta = \partial_r^2+r^{-1}\partial_r$ under planar radial symmetry, where $r=(x^2+y^2)^{1/2}$. Such solutions satisfy the ordinary differential equation
\begin{align}
\begin{split}\label{eq:klaus_stationary}
0&= u_{rr}+\frac{1}{r}u_r+a-u-uv^2\\
0&= \delta^2\left(v_{rr}+\frac{1}{r}v_r\right) -mv+uv^2(1-bv),
\end{split}
\end{align}
which can be rewritten as a first order non-autonomous system
\begin{align}
\begin{split}\label{eq:slow}
 u_r&=  p\\
  p_r&= -\frac{p}{r}-a+u+uv^2\\
\delta v_r&= q\\
\delta q_r&= -\frac{\delta q}{r} +mv-uv^2(1-bv),
\end{split}
\end{align}
This system admits up to three equilibria given by the steady states above: the desert state $P_0=(a,0,0,0)$, and if $\frac{a}{m}>2(b+\sqrt{1+b^2})$, there are two additional equilibria $P_{1,2}:=(U_{1,2},0,V_{1,2},0)$ corresponding to uniform vegetation.

Spots, gaps, and other localized radially symmetric solutions are constructed as orbits of~\eqref{eq:slow} which are asymptotic as $r\to \infty$ to one of these steady states, and which are bounded as $r\to 0$. In order to find solutions of this system, we construct candidate solution orbits in different subsets of the spatial domain $r\in[0,\infty)$ and glue these together to build a solution on the entire domain. In particular, we construct the solution over three primary regions: the core $r\in[0,r_\mathrm{c}]$, where $r_\mathrm{c} = \mathcal{O}(\delta)$, the far field $r\in[r_I,\infty)$, where $r_I=\mathcal{O}(1)$, and the transition region(s) in between; see Figure~\ref{fig:spot_schem}.

 {
\begin{figure}
\hspace{.03\textwidth}
\begin{subfigure}{.45 \textwidth}
\centering
\includegraphics[width=1\linewidth]{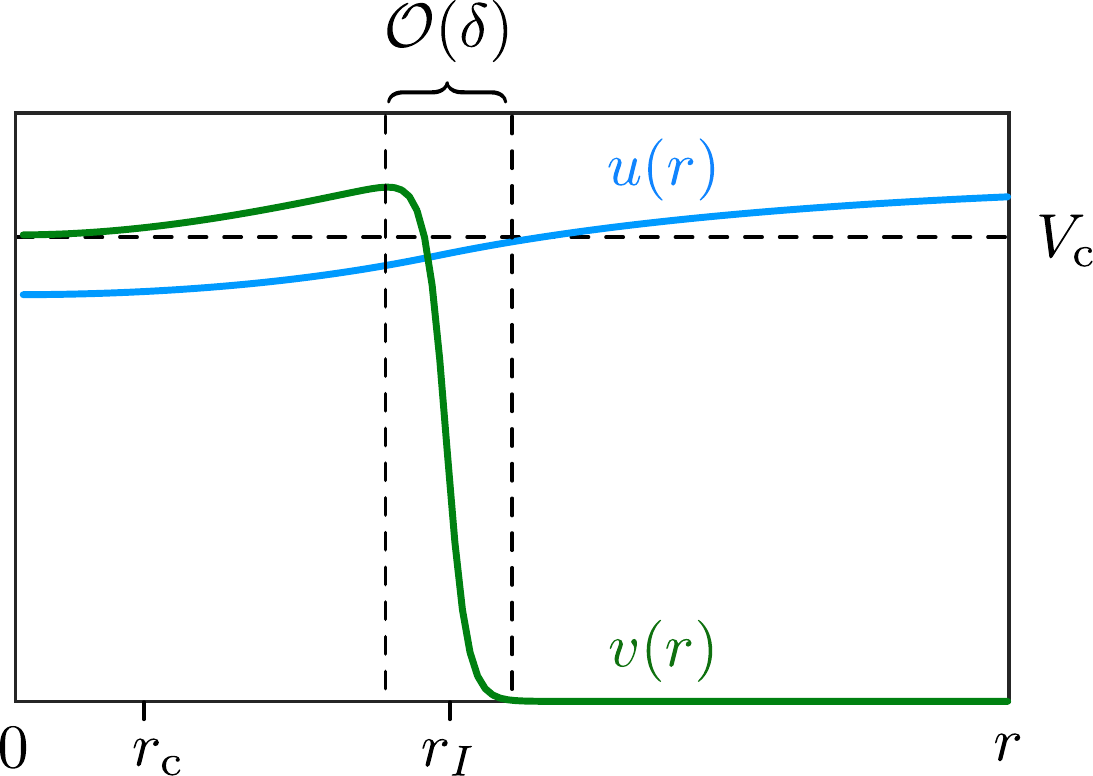}
\end{subfigure}
\hspace{.03\textwidth}
\begin{subfigure}{.45 \textwidth}
\centering
\includegraphics[width=1\linewidth]{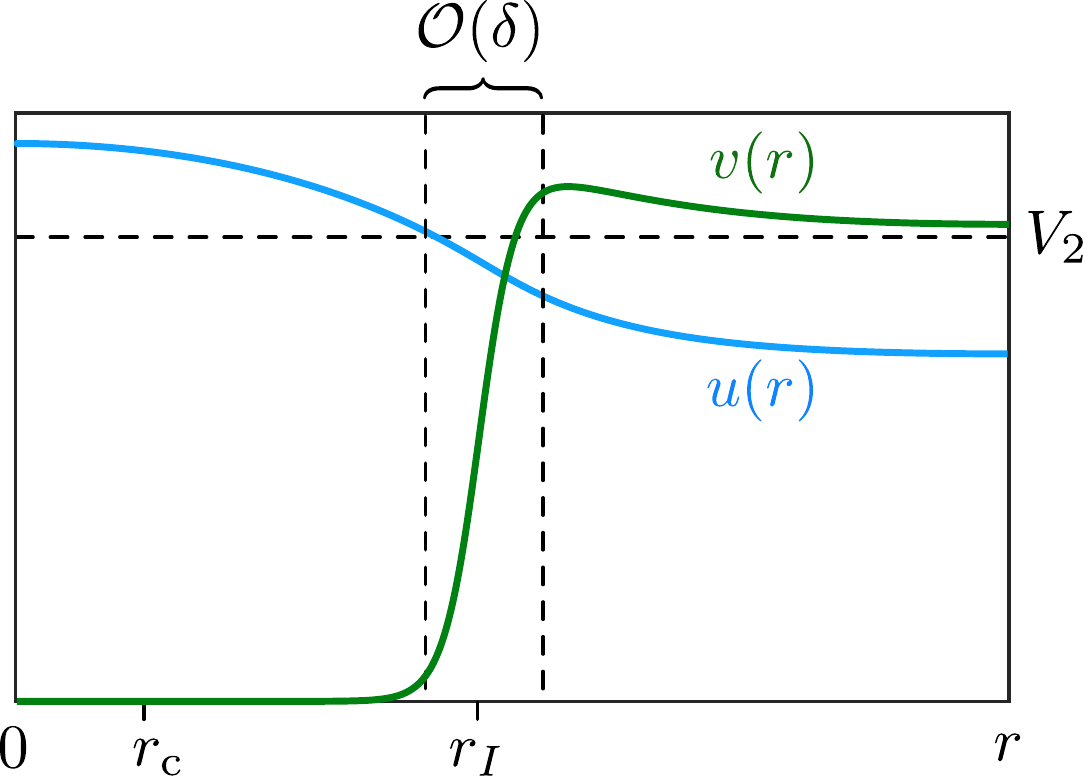}
\end{subfigure}
\caption{Shown is a schematic of a radial profile for a vegetation spot solution (left) and gap solution (right) of~\eqref{eq:klaus_stationary} as in Theorem~\ref{thm:spot_existence} and~\ref{thm:gap_existence}, respectively. The profile contains a sharp transition from the vegetated state to the bare soil state in an interval of width $\mathcal{O}(\delta)$ of the critical radius $r_I=\mathcal{O}(1)$.  }
\label{fig:spot_schem}
\end{figure}}

Our main result concerning the existence of spots is the following.
\begin{Theorem}\label{thm:spot_existence}(Existence of spots)
Fix $a,m,b>0$ satisfying
\begin{align}\label{eq:spot_parameter_restriction}
    \max\left\{\frac{9b}{2}, 4b+\frac{1}{b}\right\}<\frac{a}{m}<\frac{9b}{2}+\frac{2}{b}.
\end{align} Suppose
\begin{align}
\label{eq:spot_condition}
\int_{U_2}^{\frac{9bm}{2}} \frac{u - 2mb + \sqrt{u^2 - 4umb}}{2b^2} \mathrm{d}u&> \frac{1}{2}(a-U_2)^2,    
\end{align}
 {or equivalently,
\begin{align}\label{eq:spot_condition_exp}
    \frac{3}{2} -\log(2)+\frac{U_2}{2m^2}\left(a-U_2\right)+\log\left( \frac{b}{m}\left(a-U_2 \right) \right) - \frac{b}{m}\left(a-U_2 \right) > \frac{2b^2+1}{2m^2}\left(a-U_2 \right)^2
\end{align}}
where $U_2$ is defined as in~\eqref{eq:steadystates}. Then there exists $V_\mathrm{c}(a,b,m)>0$ and $r_I(a,b,m)>0$ such that for all sufficiently small $\delta>0$,~\eqref{eq:modifiedKlausmeier} admits a stationary, bounded, radially symmetric vegetation spot solution $(U,V) = (u_\mathrm{sp},v_\mathrm{sp})(r;a,b,m,\delta)$ satisfying
\begin{align*}
\lim_{r\to 0} v_\mathrm{sp}(r;a,b,m,\delta)&=V_\mathrm{c}(a,b,m)+\mathcal{O}(\delta), \qquad \lim_{r\to \infty} v_\mathrm{sp}(r;a,b,m,\delta)=0
\end{align*}
with a single interface between the vegetated and desert states occurring at the radius $r=r_I(a,b,m)+\mathcal{O}(\delta)$.
\end{Theorem}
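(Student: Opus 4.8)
The plan is to construct the spot solution as an orbit of the first-order system~\eqref{eq:slow} using geometric singular perturbation theory, gluing together pieces defined on the three regions identified in the text --- the core $r\in[0,r_\mathrm{c}]$ with $r_\mathrm{c}=\mathcal{O}(\delta)$, the far field $r\in[r_I,\infty)$, and the transition layer near $r=r_I$. In the singular limit $\delta\to0$, the system~\eqref{eq:slow} has the slow-fast structure: away from layers, $v$ is slaved to the slow manifolds $\{q=0,\ mv-uv^2(1-bv)=0\}$, which contain the branch $v=0$ (bare soil) and a vegetated branch $v=V_+(u)$ obtained by solving the cubic; on these manifolds the reduced slow flow for $(u,p)$ is the planar non-autonomous system $u_r=p$, $p_r=-p/r-a+u+uv^2$, which (unlike in~\cite{vHS}) is genuinely nonlinear in $u$ through the term $uV_+(u)^2$. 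The fast layer at $r=r_I$ is governed, to leading order, by the autonomous fast subsystem $v'=q$, $q'=mv-uv^2(1-bv)$ with $u$ frozen; a heteroclinic connection between $v=0$ and $v=V_+(u)$ exists precisely when $u$ takes the Maxwell value determined by equality of a certain integral, and this is where the quantity $\frac{9bm}{2}$ and the left-hand side of~\eqref{eq:spot_condition} enter --- I expect $u=\tfrac{9bm}{2}$ at the interface and the integral $\int_{U_2}^{9bm/2}\frac{u-2mb+\sqrt{u^2-4umb}}{2b^2}\,\mathrm{d}u$ to be exactly the ``area'' swept by the slow flow's energy along the vegetated manifold between the core value and the interface.

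The steps, in order: (i) Analyze the core region by rescaling $r=\delta\rho$; for $\rho$ bounded the equation for $(v,q)$ becomes a regular perturbation of a planar Hamiltonian-type system (with the $1/\rho$ term active), and one shows there is a one-parameter family of solutions bounded as $\rho\to0$ with $v(0)=V_\mathrm{c}+\mathcal{O}(\delta)$, $v_r(0)=0$, matching $u\approx U_\mathrm{c}$, $p\approx 0$; this determines $V_\mathrm{c}(a,b,m)$ as the value for which the core orbit connects to (a neighborhood of) the vegetated slow manifold. (ii) On the vegetated slow manifold, solve the reduced slow flow forward in $r$ from the core, tracking the scalar ``energy'' $E(r)=\tfrac12 p^2 - \int(-a+u+uV_+(u)^2)\,\mathrm{d}u$ whose derivative is $-p^2/r\le0$; condition~\eqref{eq:spot_condition} is precisely the inequality guaranteeing that the slow orbit reaches the interface value $u=\tfrac{9bm}{2}$ (where the fast jump-off is possible) with $p<0$, i.e. that enough energy remains after the $1/r$ dissipation. (iii) At $r=r_I$, invoke the existence of the heteroclinic in the fast layer and use Fenichel theory / an exchange-lemma-type estimate to track the orbit across the transition region from the vegetated manifold to the bare-soil manifold $v=0$, picking up only $\mathcal{O}(\delta)$ corrections to $u,p$ and fixing $r_I(a,b,m)$. (iv) On the bare-soil manifold $v\equiv0$, the reduced flow is the linear system $u_r=p$, $p_r=-p/r-a+u$, i.e. a modified-Bessel equation for $u-a$; show the orbit emanating from the interface with the matched $(u,p)$ data decays to $P_0=(a,0,0,0)$ as $r\to\infty$ (this is automatic since the decaying modified-Bessel solution is selected by boundedness, and the transversality of the far-field stable manifold to the transition orbit must be checked). (v) Finally, assemble: use $\delta$ as the gluing parameter, show the core-to-transition and transition-to-far-field matching conditions define a transversal intersection persisting for small $\delta>0$, and verify that~\eqref{eq:spot_condition_exp} is the explicit evaluation of the integral in~\eqref{eq:spot_condition} together with the bookkeeping of $U_2$, while~\eqref{eq:spot_parameter_restriction} is exactly the parameter window in which $\tfrac{9bm}{2}$ lies between $U_2$ and $U_1$ (so the interface value is admissible) and the relevant slow manifolds are normally hyperbolic.

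The main obstacle will be step (ii): because the slow flow is nonlinear, one cannot write the slow orbit in closed form, so establishing that it actually reaches $u=\tfrac{9bm}{2}$ with the correct sign of $p$ --- rather than turning around or leaving the normally hyperbolic region --- requires a careful monotonicity/energy argument, and it is here that the quantitative hypothesis~\eqref{eq:spot_condition} is used in an essential (not merely technical) way; getting the constants to line up so that~\eqref{eq:spot_condition} is both necessary and sufficient for the slow connection is the crux. A secondary difficulty is the core analysis in step (i): the $1/\rho$ singularity at $\rho=0$ means one is solving a singular BVP, and one must show the bounded core solutions form a smooth family transverse to the vegetated slow manifold's stable foliation, which requires a blow-up or series expansion near $\rho=0$ analogous to the standard treatment of radial spots but complicated here by the nonlinear reaction term.
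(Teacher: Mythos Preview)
Your overall architecture --- core manifold, slow flow on the vegetated branch, fast heteroclinic layer at $u=u_\mathrm{f}=\tfrac{9bm}{2}$, linear Bessel flow on the desert branch, transversal gluing --- matches the paper's. However, step~(ii) contains a genuine conceptual error that would prevent the argument from closing.

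First, a sign: on the vegetated slow manifold the orbit runs from $u\approx U_2$ near the core \emph{up} to $u=u_\mathrm{f}$ at the interface, so $p=u_r>0$ there, not $p<0$. More importantly, your interpretation of condition~\eqref{eq:spot_condition} as ``enough energy remains after the $1/r$ dissipation'' is backwards. The dissipative term $-p^2/r$ works \emph{against} you: it lowers $p_f$ relative to the conservative prediction, whereas what you need is $p_f$ to be \emph{large} --- specifically larger than the limiting value $a-u_\mathrm{f}$ of the far-field Bessel curve $p_\mathrm{out}(r)=(a-u_\mathrm{f})K_1(r)/K_0(r)$. The paper's argument does not estimate the dissipation along a single orbit. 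Instead it tracks the \emph{entire one-parameter family} $\Gamma_\mathrm{in}$ of core orbits, parameterized by $u_\mathrm{in}\in(U_2,u_\mathrm{f})$, to the hyperplane $\{u=u_\mathrm{f}\}$, obtaining a curve $(p_\mathrm{f},r_\mathrm{f})(u_\mathrm{in})$. Two monotonicity lemmas show both $p_\mathrm{f}$ and $r_\mathrm{f}$ are strictly decreasing in $u_\mathrm{in}$, so $p_\mathrm{f}$ is an \emph{increasing} function of $r$; since $p_\mathrm{out}$ is strictly decreasing in $r$, the two curves intersect (transversely, automatically) if and only if their $r\to\infty$ limits are correctly ordered. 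The key limit is $\lim_{u_\mathrm{in}\to U_2}(p_\mathrm{f},r_\mathrm{f})=(p_{\mathrm{f},\infty},\infty)$, obtained by observing that as $u_\mathrm{in}\to U_2$ the orbit hugs the invariant line $\{u=U_2,p=0\}$ out to large $r$, where the $-p/r$ term becomes negligible and the flow is governed by the \emph{conservative} system $u_\xi=p$, $p_\xi=f(u)$ with integral $E(u,p)=-\tfrac12 p^2+\int_{U_2}^u f$. Thus $p_{\mathrm{f},\infty}$ is the $p$-value on the unstable manifold of $(U_2,0)$ in that Hamiltonian limit, and condition~\eqref{eq:spot_condition} is exactly $E(u_\mathrm{f},a-u_\mathrm{f})>0$, i.e.\ $p_{\mathrm{f},\infty}>a-u_\mathrm{f}$. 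So the inequality is a statement about the \emph{undissipated} limit, not about surviving dissipation; and $r_I$ is \emph{selected} by the intersection, not fixed in advance.

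Your step~(i) is also thinner than what is needed: the paper requires an intermediate scaling (splitting the core into $s\in[0,s_\mathrm{c}]$ with a Bessel-function expansion of $\mathcal{W}^\mathrm{cu}_\delta(\mathcal{C})$, followed by a transition region $r\in[\delta s_\mathrm{c},r_0]$ treated as a second slow--fast system with small parameter $1/s_\mathrm{c}$) in order to track the core manifold out to $r=\mathcal{O}(1)$ and show it aligns along the unstable fibers of the curve $\Gamma_\mathrm{in}$. A single rescaling $r=\delta\rho$ will not get you there.
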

Similarly, we have the following theorem concerning the existence of gaps.
\begin{Theorem}\label{thm:gap_existence}(Existence of gaps)
Fix $a,m,b>0$ satisfying~\eqref{eq:spot_parameter_restriction}. Suppose
\begin{align}\label{eq:gap_condition}
\int_{U_2}^{\frac{9bm}{2}} \frac{u - 2mb + \sqrt{u^2 - 4umb}}{2b^2} \mathrm{d}u< \frac{1}{2}(a-U_2)^2    
\end{align}
 {or equivalently,
\begin{align}
    \frac{3}{2} -\log(2)+\frac{U_2}{2m^2}\left(a-U_2\right)+\log\left( \frac{b}{m}\left(a-U_2 \right) \right) - \frac{b}{m}\left(a-U_2 \right) < \frac{2b^2+1}{2m^2}\left(a-U_2 \right)^2
\end{align}
where $U_2$ is defined as in~\eqref{eq:steadystates}. Then there exists $r_I(a,b,m)>0$ and $\nu>0$ such that for all sufficiently small $\delta>0$,~\eqref{eq:modifiedKlausmeier} admits a stationary, bounded, radially symmetric vegetation gap solution $(U,V) = (u_\mathrm{g},v_\mathrm{g})(r;a,b,m,\delta)$ satisfying
\begin{align*}
\lim_{r\to 0} v_\mathrm{g}(r;a,b,m,\delta)&=\mathcal{O}(e^{-\nu/\delta}), \qquad \lim_{r\to \infty} v_\mathrm{g}(r;a,b,m,\delta)=V_2+\mathcal{O}(\delta),
\end{align*}
where $V_2$ is defined as in~\eqref{eq:steadystates}, 
with a single interface between the vegetated and desert states occurring at the radius $r=r_I(a,b,m)+\mathcal{O}(\delta)$.}
\end{Theorem}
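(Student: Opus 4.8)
The plan is to follow the same geometric singular perturbation / spatial dynamics scheme used for the spot solution in Theorem~\ref{thm:spot_existence}, but with the roles of the two background states interchanged: now the orbit must be asymptotic to the uniformly vegetated equilibrium $P_2$ as $r\to\infty$ and lie near the desert state $(u,v)\approx(a,0)$ in the core region $r\in[0,r_\mathrm{c}]$ with $r_\mathrm{c}=\mathcal{O}(\delta)$. First I would set up the core analysis: near $r=0$ we need a solution of~\eqref{eq:slow} bounded as $r\to 0$, and because $v$ is exponentially small there the $(v,q)$ equations are, to leading order, the linearized fast system $\delta v_r=q$, $\delta q_r=mv$, whose bounded-at-the-origin solution is governed by a modified Bessel function $I_0(\sqrt{m}\,r/\delta)$; this produces the stated $v_\mathrm{g}(0)=\mathcal{O}(e^{-\nu/\delta})$ with $\nu$ essentially $\sqrt m\, r_I$. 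Meanwhile $u$ sits near the slow manifold with $u(0)\approx a$ up to $\mathcal{O}(\delta)$ corrections, tracked by the reduced (nonlinear) slow flow $u_{rr}+r^{-1}u_r = u-a$ on $\{v=q=0\}$.

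Next I would analyze the far-field region $r\in[r_I,\infty)$. Here the orbit lies on the vegetated slow manifold $\mathcal{M}_2^\delta$, a perturbation of the normally hyperbolic critical manifold on which $q=0$ and $mv=uv^2(1-bv)$ solves for $v=v_+(u)$ near $V_2$. On this manifold the reduced slow flow for $u$ is a planar nonlinear non-autonomous ODE (the $r^{-1}u_r$ term), and I would show it possesses a solution converging to $(U_2,0)$ as $r\to\infty$ and matching prescribed data at $r=r_I$; convergence to the equilibrium $P_2$ follows from its hyperbolicity in the slow subsystem together with a standard stable-manifold / asymptotic argument for the non-autonomous term, which decays like $1/r$.

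The heart of the proof is the transition region around $r=r_I$, where the solution jumps from the lower branch $v\approx 0$ to the upper branch $v\approx V_2$ through a fast layer of width $\mathcal{O}(\delta)$. Rescaling $r = r_I + \delta\xi$ reduces~\eqref{eq:slow} at leading order to the autonomous fast Hamiltonian system $v_\xi=q$, $q_\xi = mv - u_*v^2(1-bv)$ with $u$ frozen at its interface value $u=u_*$; a heteroclinic connection between the two critical-manifold branches exists precisely when the frozen value $u_*$ equals a Maxwell-type value determined by equal-area (Melnikov) balance of the potential. Tracking $u$ and $p$ through the layer and imposing continuity of $(u,p)$ with the core and far-field pieces yields a matching condition: the change in the slow energy $\tfrac12 p^2$ across the layer must equal the area swept under the fast heteroclinic, which is exactly the content of~\eqref{eq:gap_condition} — the reversed inequality selects the gap (vegetation-invading-desert) orientation rather than the spot orientation. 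I expect the main obstacle to be precisely this matching: because the slow flow is nonlinear (unlike~\cite{vHS}), $u_*$ and $p$ at the interface are not given in closed form but are themselves determined implicitly by the core and far-field reduced flows, so one must show that the scalar matching equation obtained by gluing all three regions has a solution $r_I(a,b,m)>0$, using the parameter restriction~\eqref{eq:spot_parameter_restriction} to guarantee the relevant quantities are well-defined (e.g. $9bm/2$ lies in the admissible $u$-range and the square roots are real) and that the integral condition~\eqref{eq:gap_condition} has the correct sign. Finally, persistence for $0<\delta\ll1$ of the concatenated singular orbit follows from the Exchange Lemma / Fenichel theory applied at each junction, giving the $\mathcal{O}(\delta)$ error terms in the statement.
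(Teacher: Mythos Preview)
Your overall architecture matches the paper's: swap the roles of the two states relative to the spot construction, build the core manifold on $\mathcal{M}^0_\delta$ (where the reduced flow is linear, so modified Bessel functions $I_0,I_1$ give the bounded solutions explicitly), build the far-field manifold on $\mathcal{M}^+_\delta$ (where the reduced flow is nonlinear and one argues via the conserved energy $E(u,p)$ of the $r\to\infty$ limit), insert a single fast heteroclinic layer, and invoke Fenichel/Exchange Lemma for persistence.

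However, your account of the matching step contains a genuine misconception that would need to be fixed before the argument closes. First, the frozen $u$-value at the jump is \emph{not} determined implicitly by the slow flows: it is exactly $u_\mathrm{f}=\tfrac{9bm}{2}$, fixed by the Maxwell condition in the fast subsystem, and since $(u,p)$ are slow they are constant to leading order across the layer --- there is no ``change in the slow energy $\tfrac12 p^2$ across the layer''. Second, the inequality~\eqref{eq:gap_condition} is not an ``area swept under the fast heteroclinic''; both sides live entirely in the slow dynamics. The left-hand integral is $\int_{U_2}^{u_\mathrm{f}}\bigl(\tilde u - a + \tilde u\, v_+(\tilde u)^2\bigr)\,\mathrm{d}\tilde u$, i.e.\ the slow energy $E(u_\mathrm{f},0)$ on $\mathcal{M}^+_0$, which determines the limiting $p$-value $-p_{\mathrm{f},\infty}$ of the far-field bounded set at $u=u_\mathrm{f}$; the right-hand side encodes the limiting $p$-value $u_\mathrm{f}-a$ of the core curve $p_\mathrm{in}(r)=(u_\mathrm{f}-a)\,I_1(r)/I_0(r)$ coming from the linear flow on $\mathcal{M}^0_0$. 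The actual matching is then: both curves, parametrized by the candidate interface radius $r$, are monotone in $r$ with these respective limits as $r\to\infty$, so they intersect at some $r=r_I$ precisely when $|u_\mathrm{f}-a|<p_{\mathrm{f},\infty}$, which after rearrangement is~\eqref{eq:gap_condition}. Once you replace your layer-energy picture with this slow-variable intersection argument, the rest of your outline goes through as in the paper.
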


 {See Figure~\ref{fig:parameter_condition_abm} for a visualization of the condition~\eqref{eq:spot_parameter_restriction} necessary for the existence of spots/gaps in Theorems~\ref{thm:spot_existence}--\ref{thm:gap_existence}.

\begin{figure}
\centering
\includegraphics[width=0.45\linewidth]{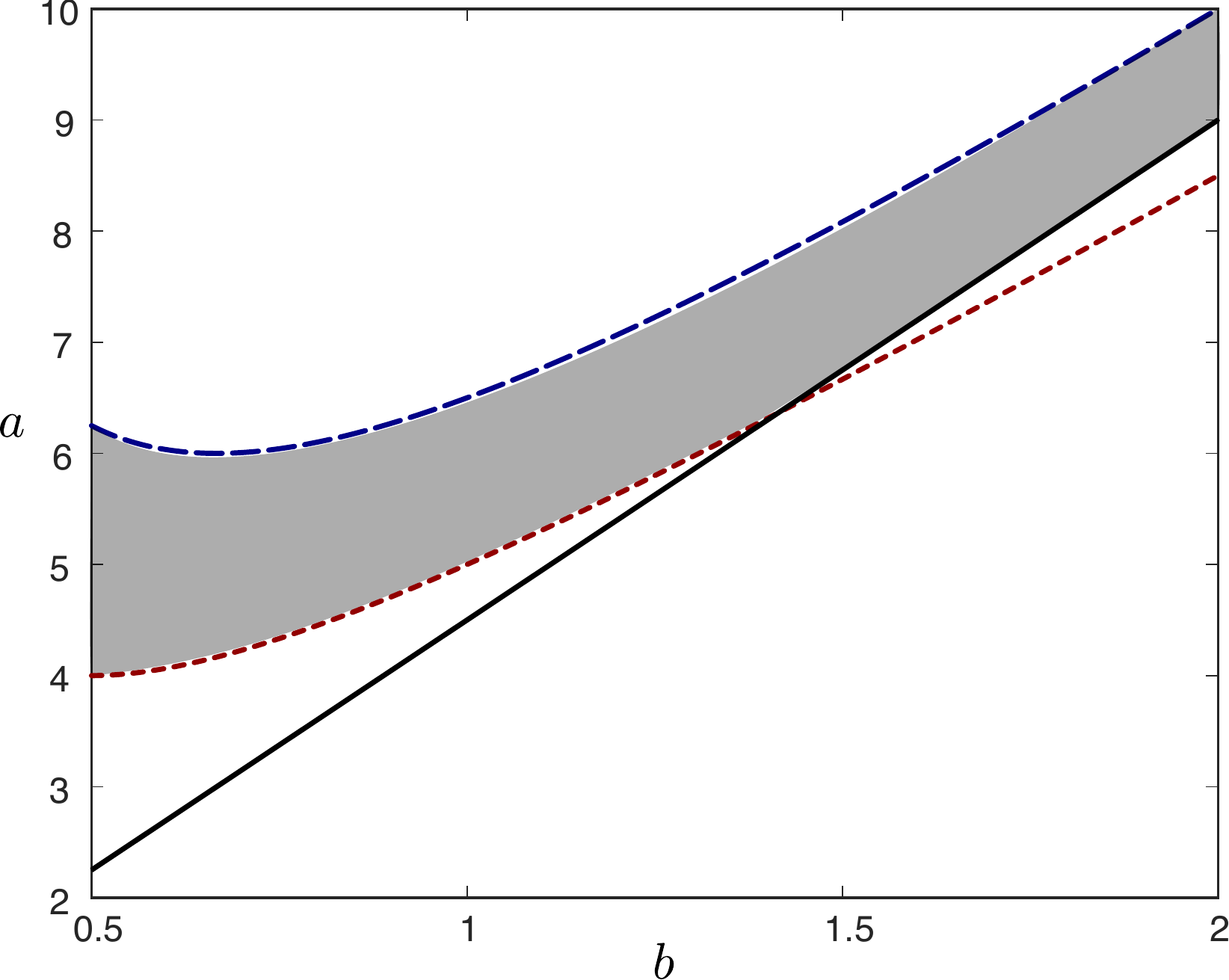}
\caption{ Plotted are the curves $\frac{a}{m} = \frac{9b}{2}$ (black), $\frac{a}{m}  = 4b+\frac{1}{b}$ (dotted red), and $\frac{a}{m}=\frac{9b}{2}+\frac{2}{b}$ (dashed blue). The shaded region corresponds to the region in parameter space where the condition~\eqref{eq:spot_parameter_restriction} is satisfied.  }
\label{fig:parameter_condition_abm}
\end{figure}

}

\begin{Remark}
The methods used in this paper to construct radially symmetric spot and gap solutions can be applied in a similar manner for the construction of solutions such as rings, targets, or other radially symmetric profiles, with different, perhaps more complex, conditions on parameters which ensure their existence. We provide some numerical evidence for the existence of such solutions in~\S\ref{sec:discussion} and describe how one might go about constructing these orbits, but do not provide the lengthy technical details here.
\end{Remark}

The spot and gap solutions of Theorems~\ref{thm:spot_existence}--\ref{thm:gap_existence} will be constructed as heteroclinic orbits for $r\in[0,\infty)$ using geometric singular perturbation theory. The main idea, following~\cite{vHS}, is to find the orbits as intersections of a core manifold of solutions which remain bounded at $r\to0$, and a far-field manifold of solutions which decay to one of the states $P_0$ or $P_2$ as $r\to\infty$. These manifolds are each three-dimensional, and, unlike in~\cite{vHS}, it is not possible to obtain an explicit description of these manifolds due to the fact that the flow on (one of) the slow manifolds of~\eqref{eq:slow} is nonlinear. This introduces complications which are handled through the use of an intermediate scaling and careful qualitative analysis of the nonlinear non-autonomous reduced flow on this slow manifold. The non-autonomous nature of the equation~\eqref{eq:slow} makes this a somewhat challenging construction. To demonstrate how these orbits are constructed using the slow/fast geometry of~\eqref{eq:slow}, it is helpful to first consider the simpler construction of traveling or stationary planar front solutions of~\eqref{eq:modifiedKlausmeier}, which manifest as heteroclinic orbits in an appropriate traveling equation with a similar geometry to that of~\eqref{eq:slow}.

It is shown in companion paper \cite{CDLOR} that the invasion fronts in (\ref{eq:modifiedKlausmeier}) -- that can be seen as spots or gaps with radius $r_I \to \infty$ -- are unstable with respect to a sideband/finger instability. Therefore, we next consider the stability of spots. Without going into the details of a fully rigorous analysis, we first consider the spectral stability problem for large spots. By a careful asymptotic analysis we recover the sideband instability mechanism and conclude that large spots are unstable and will typically form finger-like patterns -- like the planar invasion fronts. By similarity, we conclude that the same is true for large gaps. These observations are confirmed by direct simulations: in Fig. \ref{fig:spot_fingering} in \S\ref{sec:discussion} we show snapshots of a spot and a gap that both evolve towards labyrinthine patterns after undergoing such an instability. Moreover, we subsequently conclude by considering perturbations within a range of intermediate wave numbers that spots (and gaps) with $\mathcal{O}(1)$ radius $r_I$ must also be unstable: only sufficiently small spots and gaps may possibly be stable -- see again \S\ref{sec:discussion}  and especially Fig. \ref{fig:stable_radial_solutions} for a brief numerical study. These small gaps correspond to fairy circles \cite{getzin2016discovery} and also appear as stable vegetation patterns in numerical simulations of the dryland ecosystem model of  \cite{zelnik2015gradual} -- a model that can be seen as a slightly more extended version of (\ref{eq:modifiedKlausmeier}) \cite{CDLOR} (and that has been deduced from more extended models to study fairy circles).

The set-up of the paper is as follows. In~\S\ref{sec:fronts}, we consider the construction of traveling fronts in~\eqref{eq:modifiedKlausmeier}, while in~\S\ref{sec:existence}, we treat the radially symmetric case and provide the proofs of Theorems~\ref{thm:spot_existence}--\ref{thm:gap_existence}. The spectral stability of the radial spot and gap solutions is considered in~\S\ref{sec:stability} using formal asymptotic methods, and we include numerical simulations and a brief discussion in~\S\ref{sec:discussion}.

\section{Stationary and traveling planar fronts}\label{sec:fronts}
To motivate the approach for the existence analysis of radially symmetric  solutions of~\eqref{eq:modifiedKlausmeier}, we first consider the (simpler) case of constructing stationary and traveling front solutions. We pose the traveling wave ansatz $(u,v)(x,y,t) = (u,v)(\xi)$, where $\xi=x-\delta ct$ is a traveling wave coordinate; here $c=0$ corresponds to stationary front solutions, while $c\neq0$ corresponds to solutions which propagate with wave speed $\delta c$.

Using a geometric singular perturbation approach, we can construct bistable front solutions as perturbations from slow/fast heteroclinic orbits between the desert state $(U,V)=(a,0)$ and the vegetated state $(U,V)=(U_{2},V_{2})$ in the traveling wave ODE
\begin{align}
\begin{split}\label{eq:klaus_twode}
0&= u_{\xi\xi}+\delta cu_\xi+a-u-uv^2\\
0&= \delta^2v_{\xi\xi}+\delta cv_\xi -mv+uv^2(1-bv).
\end{split}
\end{align}
We note that in the case $c=0$, the system~\eqref{eq:klaus_twode} corresponds to~\eqref{eq:klaus_stationary} in the far field limit $r\to\infty$. Thus, it is natural to first consider the geometry of~\eqref{eq:klaus_twode}, as solutions with radial symmetry are constructed by matching a solution which is bounded near the core $r=0$, with a stationary solution which (approximately) satisfies~\eqref{eq:klaus_twode} in the far field limit $r\to \infty$.

\subsection{Slow/fast analysis}
We can write~\eqref{eq:klaus_twode} as a first order system
\begin{align}
\begin{split}\label{eq:1d_slow}
 u_\xi&=  p\\
  p_\xi&= -\delta cp-a+u+uv^2\\
\delta v_\xi&= q\\
\delta q_\xi&= -cq +mv-uv^2(1-bv),
\end{split}
\end{align}
which we refer to as the slow system, and upon rescaling $\xi = \delta \zeta$, we obtain the equivalent fast system
\begin{align}
\begin{split}\label{eq:1d_fast}
 u_\zeta&=  \delta p\\
  p_\zeta&= \delta\left(-\delta cp-a+u+uv^2\right)\\
v_\zeta&= q\\
q_\zeta&= -cq +mv-uv^2(1-bv),
\end{split}
\end{align}
Setting $\delta=0$ in~\eqref{eq:1d_fast} yields the layer problem
\begin{align}
\begin{split}\label{eq:1d_layer}
v_\zeta&= q\\
q_\zeta&= -cq +mv-uv^2(1-bv).
\end{split}
\end{align}
This system admits up to three equilibria depending on $u$, given by $M_0(u):=(0,0)$, and if $u>4bm$, $M_\pm(u):=(v_\pm(u),0)$, where
\begin{align}
v_\pm(u) := \frac{1\pm\sqrt{1-\frac{4bm}{u}}}{2b}.
\end{align}
When $u=4bm$, the two equilibria $M_+(4bm)=M_-(4bm)$ coincide. Computing the linearization
\begin{align}\label{eq:layer_lin}
J_\mathrm{layer} = \begin{pmatrix}  0 &1\\ m-2uv+3buv^2 & -c \end{pmatrix},
\end{align}
a short computation shows that for $m,b>0$ the fixed points $M_0$ and $M_+(u),u>4bm$ are always of saddle type, while $M_-(u)$ is focus or node for $c\neq0$, and a center for $c=0$. The equilibrium $M_+(4bm)=M_-(4bm)$ is nonhyperbolic. Taken together, the set of equilibria of the layer equation~\eqref{eq:1d_layer} corresponds to the critical manifold
\begin{align}
\mathcal{M}_0 = \left\{(u,p,v,q)\in \mathbb{R}^4:q=0, mv=uv^2(1-bv)\right\},
\end{align}
obtained by setting $\delta=0$ in~\eqref{eq:1d_slow}. We therefore decompose $\mathcal{M}_0$ into three branches $\mathcal{M}_0 = \mathcal{M}^0_0\cup \mathcal{M}^-_0 \cup \mathcal{F}\cup \mathcal{M}^+_0$, where 
\begin{align}\label{eq:M0branches}
\mathcal{M}^0_0 = \{v=q=0\}, \qquad \mathcal{M}^-_0 = \{q=0, v=v_-(u), u>4bm\}, \qquad \mathcal{M}^+_0 = \{q=0, v=v_+(u), u>4bm\},
\end{align}
and the latter two manifolds $\mathcal{M}^\pm_0$ meet along the nonhyperbolic fold curve $\mathcal{F}:=\{q=0, v =1/2b, u=4bm\}$. The manifolds $\mathcal{M}^0_0$ and $\mathcal{M}^+_0$ are normally hyperbolic and of saddle-type.

The reduced flow on each branch of the critical manifold is obtained by setting $\delta=0$ in~\eqref{eq:1d_slow}, and is given by
\begin{align}
\begin{split}\label{eq:1d_red}
 u_\xi&=  p\\
  p_\xi&= -a+u+uv_*(u)^2,
\end{split}
\end{align}
where $v_*(u)=0, v_\pm(u)$ on the branches $\mathcal{M}^0_0$ and $\mathcal{M}^\pm_0$, respectively.

\subsection{Singular orbits}~\label{sec:fronts_singular}
The desert equilibrium state $P_0=(a,0,0,0)$ lies on the branch $\mathcal{M}^0_0$, while the vegetated state $P_1 = (U_1,0,V_1,0)$ always lies on the branch $\mathcal{M}^-_0$. The state $P_2 = (U_2,0,V_2,0)$ can lie on either $\mathcal{M}^-_0$ or $\mathcal{M}^+_0$; the latter occurs provided $a>4mb+m/b$. In this case, a linear stability analysis shows that both $(U_0,V_0)$ and $(U_2,V_2)$ are temporally stable as homogeneous equilibria of the PDE~\eqref{eq:modifiedKlausmeier}~\cite{CDLOR}, and we can proceed to construct singular bistable fronts connecting $P_0$ and $P_2$ (or vice versa); see Figure~\ref{fig:critical_manifold}.

\begin{figure}
\centering
\includegraphics[width=0.55\linewidth]{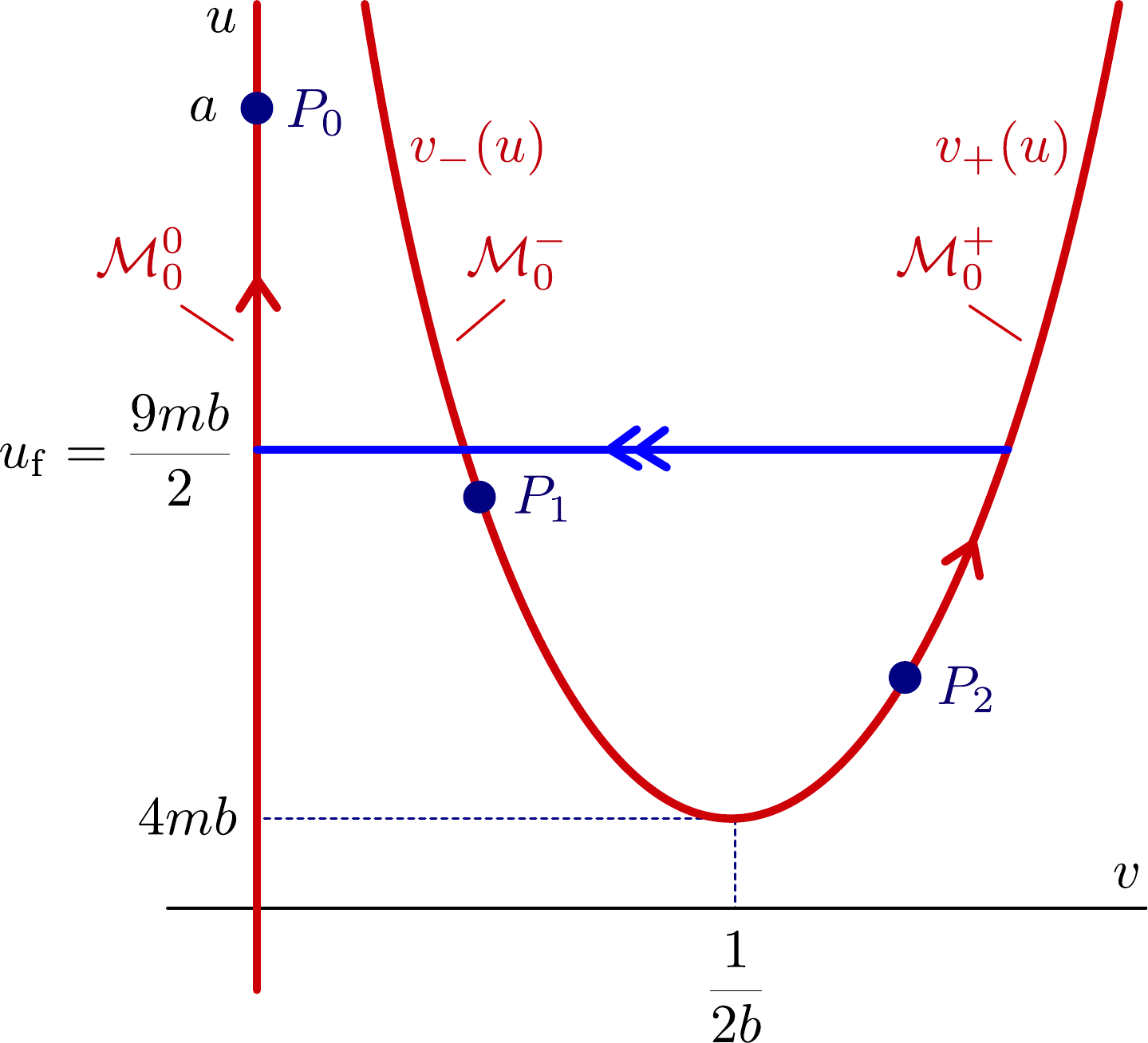}
\caption{A singular heteroclinic orbit between $P_2$ and $P_0$ is formed by concatenating slow orbits on the critical manifolds $\mathcal{M}^0_0$ and $\mathcal{M}_0^+$ with a fast orbit $\phi_\mathrm{vd}$ of the layer problem~\eqref{eq:1d_layer}.}
\label{fig:critical_manifold}
\end{figure}

To do this we form a singular slow/fast/slow heteroclinic orbit between $P_0, P_2$, by concatenating a slow orbit on $\mathcal{M}^0_0$ with another slow orbit on $\mathcal{M}^+_0$ via a fast heteroclinic orbit in the layer problem. Within the layer problem~\eqref{eq:1d_layer}, for any value of $(u,p)$ satisfying $u>4mb$, by adjusting the speed $c$ appropriately it is possible to construct fast heteroclinic orbits $\phi_\mathrm{dv}(\zeta)=(v_\mathrm{dv},q_\mathrm{dv})(\zeta)$ from $\mathcal{M}^0_0$ to $\mathcal{M}^+_0$ and $\phi_\mathrm{vd}(\zeta)=(v_\mathrm{vd},q_\mathrm{vd})(\zeta)$ from $\mathcal{M}^+_0$ to $\mathcal{M}^0_0$, corresponding to fast desert-to-vegetation and vegetation-to-desert fronts, respectively. These orbits can be computed explicitly as follows. Considering~\eqref{eq:1d_layer}, we search for solutions satisfying the ansatz $q=Cv(v-v_+(u))$ for some value of $C\neq 0$.  {Substituting into~\eqref{eq:1d_layer}, we obtain the algebraic equation
\begin{align}
C^2(v-v_+(u))+C^2v&= -cC +ub(v-v_-(u)),
\end{align}
which can be solved to find $C=\pm \sqrt{bu/2}$ and wave speeds
\begin{align*}
    c=c_\mathrm{vd}(u)&=\sqrt{\frac{bu}{2}}\left(v_+(u)-2v_-(u)\right)\\
    c=c_\mathrm{dv}(u)&=-\sqrt{\frac{bu}{2}}\left(v_+(u)-2v_-(u)\right).
\end{align*}}
We then obtain the explicit profiles (up to translation)
\begin{align}
v_\mathrm{vd}(\zeta)&:=\frac{v_+(u)}{2}\left(1-\tanh\left(\frac{v_+(u)\sqrt{bu}}{2\sqrt{2}} \zeta \right)\right),\qquad q_\mathrm{vd}(\zeta) = v_\mathrm{vd}'(\zeta)\\
v_\mathrm{dv}(\zeta)&:=\frac{v_+(u)}{2}\left(1+\tanh\left(\frac{v_+(u)\sqrt{bu}}{2\sqrt{2}} \zeta \right)\right), \qquad q_\mathrm{dv}(\zeta) = v_\mathrm{dv}'(\zeta).
\end{align}

Examining the reduced flow on $\mathcal{M}^0_0$
\begin{align}
\begin{split}\label{eq:1d_red0}
 u_\xi&=  p\\
  p_\xi&= -a+u,
\end{split}
\end{align}
we see that within $\mathcal{M}^0_0$, the equilibrium $(a,0)$, corresponding to $P_0$, is a saddle-type equilibrium with (un)stable manifolds $W^{\mathrm{u}/\mathrm{s}}(a,0)$ given by the lines $p = \pm(u-a)$. On $\mathcal{M}^+_0$, the equilibrium $(U_2,0)$, corresponding to $P_2$, is also saddle-type equilibrium, with (un)stable manifolds $W^{\mathrm{u}/\mathrm{s}}(U_2,0)$ given by the level set $E(u,p)=0$ of the conserved quantity
\begin{align}
    E(u,p) &= -\frac{1}{2}p^2 + \int_{U_2}^{u} \tilde{u}-a+\tilde{u}(v_+(\tilde{u}))^2 d\tilde{u},
\end{align}
noting that $E(U_2,0)=0$. 

To construct a singular heteroclinic orbit from say $P_2$ to $P_0$, we follow the slow unstable manifold $W^{\mathrm{u}}(U_2,0)$ of $P_2$, then a fast jump $\phi_\mathrm{vd}$, then the slow stable manifold $W^{\mathrm{s}}(a,0)$, given by the line $p=-(u-a)$. Since the slow variables $(u,p)$ are constant across the fast jump, this is only possible if there is an intersection of $W^{\mathrm{u}}(U_2,0)$ and $W^{\mathrm{s}}(a,0)$ when $W^{\mathrm{u}}(U_2,0)$ is projected onto $\mathcal{M}^0_0$, which occurs if there exists $u=u_*$ such that $E(u_*,a-u_*)=0$, or equivalently
\begin{align}\label{eq:energy_formula_front}
    \frac{1}{2}(a-u_*)^2 &= \int_{U_2}^{u_*} \tilde{u}-a+\tilde{u}(v_+(\tilde{u}))^2 d\tilde{u},
\end{align}
For an open region in $(a,b,m)$ parameter space, there is a critical value $U_2<u_*<a$, depending on $a,b,m$, which satisfies this criterion, and therefore the speed $c_*$ is determined so that the fast layer jump $\phi_\mathrm{vd}$ exists for $(u,p)=(u_*,a-u_*)$. We denote by $(u_{+,\infty},p_{+,\infty})(\xi)$ the solution of the reduced flow on $\mathcal{M}^+_0$ corresponding to the unstable manifold $W^{\mathrm{u}}(U_2,0)$, which satisfies $(u_{+,\infty},p_{+,\infty})(0)=(u_*,a-u_*)$, and we denote by $(u_{0,\infty},p_{0,\infty})(\xi)$ the solution of the reduced flow on $\mathcal{M}^0_0$ corresponding to the stable manifold $W^{\mathrm{s}}(a,0)$, which satisfies $(u_{0,\infty},p_{0,\infty})(0)=(u_*,a-u_*)$. 

\begin{figure}
\centering
\includegraphics[width=0.5\linewidth]{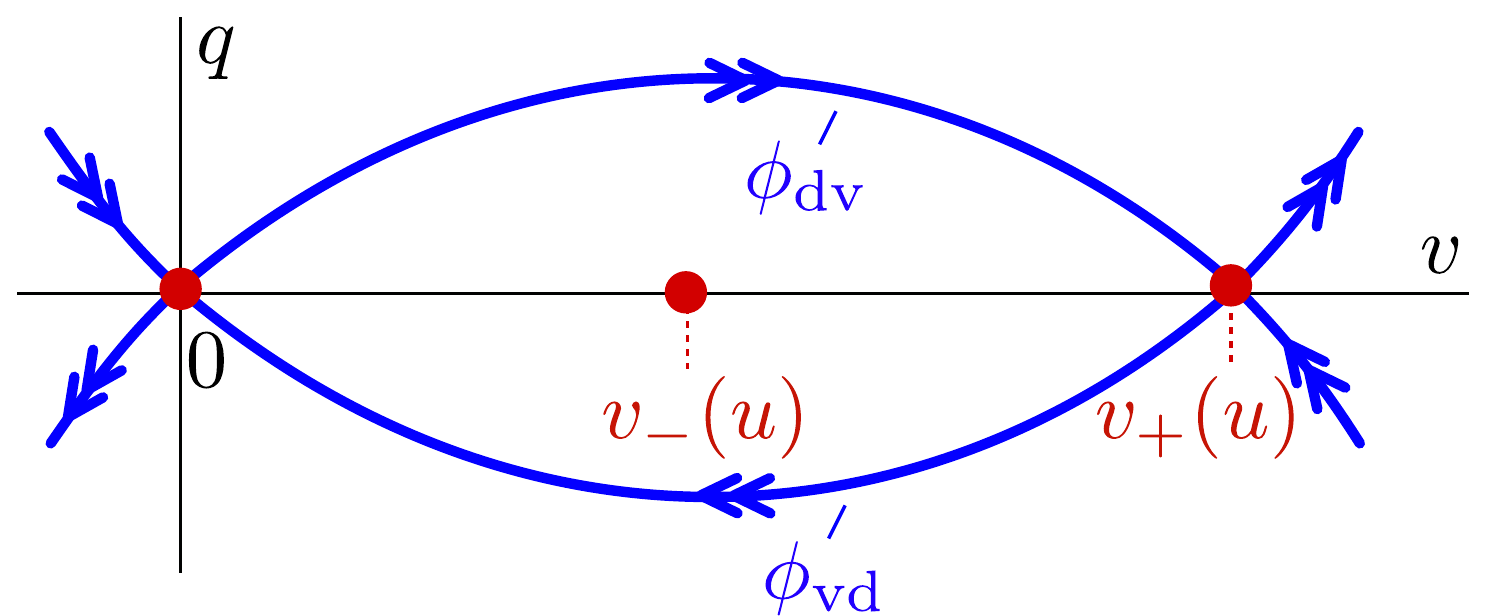}
\caption{The stationary fronts $\phi_\mathrm{vd}$ and $\phi_\mathrm{dv}$ of~\eqref{eq:1d_layer}.}
\label{fig:singular_front}
\end{figure}

Thus the singular front solution follows the solution $(u_{+,\infty},p_{+,\infty})(\xi)$ of the reduced flow on $\mathcal{M}^+_0$, followed by the fast front $\phi_\mathrm{vd}(\zeta)$, at finally the solution $(u_{0,\infty},p_{0,\infty})(\xi)$ of the reduced flow on $\mathcal{M}^0_0$. For $0<\delta\ll1$, these singular fronts can be shown to perturb to front solutions of~\eqref{eq:klaus_twode} using standard methods of geometric singular perturbation theory, using the wave speed $c$ as a free bifurcation parameter. The construction of fronts from $P_0$ to $P_2$ follows similarly. 

\begin{figure}
\centering
\includegraphics[width=0.6\linewidth]{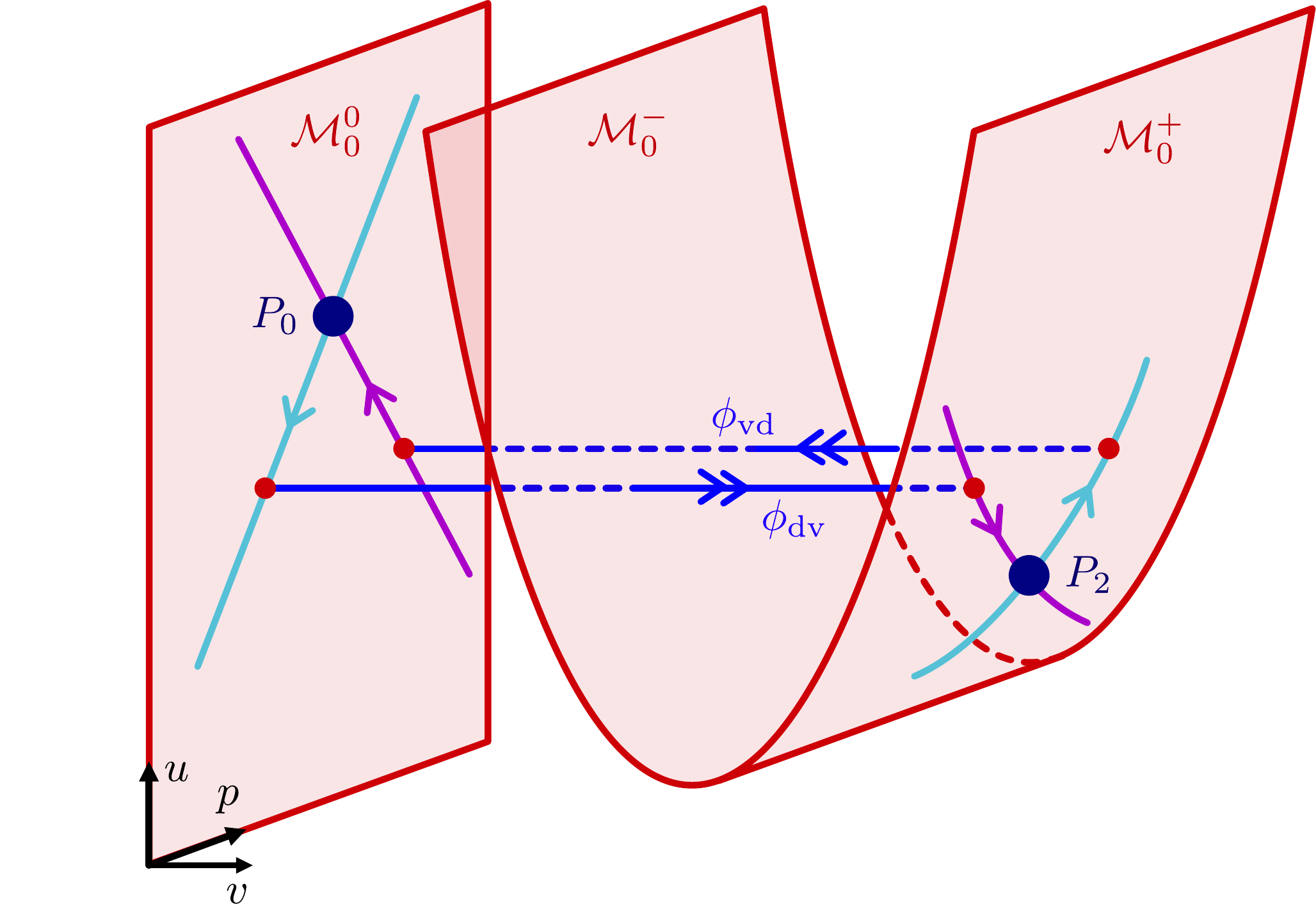}
\caption{Shown are singular orbits representing stationary fronts of~\eqref{eq:klaus_twode} obtained by concatenating slow orbits of~\eqref{eq:1d_red} on the critical manifolds $\mathcal{M}^0_0$ and $\mathcal{M}_0^+$ with fast orbits $\phi_\mathrm{vd}, \phi_\mathrm{dv}$ of the layer problem~\eqref{eq:1d_layer} for $c=0, u=u_\mathrm{f}$.}
\label{fig:singular_front_3D}
\end{figure}

\subsection{Stationary fronts}\label{sec:fronts_stationary}
Stationary fronts correspond to the case $c=0$. We describe the geometry of the singular orbit(s) in this case, as it will be useful in the forthcoming construction of spot and gap solutions. Proceeding as in~\S\ref{sec:fronts_singular}, we find that~\eqref{eq:1d_layer} admits a pair of heteroclinic orbits $\phi_\mathrm{dv}(\zeta)=(v_\mathrm{dv}, q_\mathrm{dv})(\zeta)$ and $\phi_\mathrm{vd}(\zeta)=(v_\mathrm{vd}, q_\mathrm{vd})(\zeta)$ when
\begin{align}
u=u_\mathrm{f}:=\frac{9bm}{2},
\end{align}
where
\begin{align}\begin{split}
v_\mathrm{dv}(\zeta)&:=\frac{1}{3b}\left(1+\tanh\left(\frac{\sqrt{m}}{2} \zeta \right)\right), \qquad q_\mathrm{dv}(\zeta) = v_\mathrm{dv}'(\zeta)\\
v_\mathrm{vd}(\zeta)&:=\frac{1}{3b}\left(1-\tanh\left(\frac{\sqrt{m}}{2} \zeta \right)\right), \qquad q_\mathrm{vd}(\zeta) = v_\mathrm{vd}'(\zeta).
\end{split}
\end{align}
where again $\phi_\mathrm{dv}(\zeta)$ represents the desert-to-vegetation state transition front which jumps from $\mathcal{M}^0_0$ and $\mathcal{M}^+_0$, while $\phi_\mathrm{vd}(\zeta)$ represents the vegetation-to-desert state transition front which jumps from $\mathcal{M}^+_0$ and $\mathcal{M}^0_0$; see Figures~\ref{fig:singular_front}--\ref{fig:singular_front_3D}. Note that we assume $a>u_\mathrm{f}$, so that the equilibrium $P_0$ lies `above' the critical fronts $\phi_\mathrm{dv}, \phi_\mathrm{vd}$. A lengthy but straightforward computation shows that $U_2<u_\mathrm{f}$ provided $\frac{a}{m}<\frac{9b}{2}+\frac{2}{b}$, so that the equilibrium $P_2$ lies `below' the fronts $\phi_\mathrm{dv}, \phi_\mathrm{vd}$.

While this pair of heteroclinic orbits exists for any $a,b,m>0$, in order to construct a singular slow/fast heteroclinic orbit, we still require an intersection of $W^{\mathrm{u}}(U_2,0)$ and $W^{\mathrm{s}}(a,0)$ in the reduced flow when $W^{\mathrm{u}}(U_2,0)$ is projected onto $\mathcal{M}^0_0$. Since the jump height $u=u_\mathrm{f}$ is fixed by the condition $c=0$, this only occurs if $E(u_\mathrm{f},a-u_\mathrm{f})=0$, or equivalently
\begin{align}\label{eq:energy_formula}
    \frac{1}{2}(a-u_\mathrm{f})^2 &= \int_{U_2}^{u_\mathrm{f}} \tilde{u}-a+\tilde{u}(v_+(\tilde{u}))^2 d\tilde{u},
\end{align}
which gives an implicit condition on the parameters $a,b,m$ for which a singular stationary front exists. We will show in~\S\ref{sec:existence} that this condition describes the boundary (in parameter space) which separates the existence region of radial spots versus gaps, which will be constructed as slow/fast fronts in the non-autonomous system~\eqref{eq:slow}. The radius of the spot/gap will serve as a free parameter which allows for the construction of a solution for parameters which satisfy~\eqref{eq:spot_condition} in the case of spots, or~\eqref{eq:gap_condition} in the case of gaps.

\subsection{Sideband (in)stability of planar fronts}\label{sec:fronts_stability}
Given a front solution constructed as in~\S\ref{sec:fronts_singular}, we briefly examine the stability of the front as a planar interface, which will help motivate our formal stability results in the case of radial spot and gap solutions in~\S\ref{sec:stability}. We assume that a heteroclinic vegetation-to-desert front solution $(u_\mathrm{h},v_\mathrm{h})(\xi;\delta)$ exists which connects the state $P_2$ to $P_0$ (the case of a desert-to-vegetation front is similar), and has been constructed as a perturbation from one of the singular slow/fast fronts in~\S\ref{sec:fronts_singular}, with speed $c=c_\mathrm{h}(\delta) = c_\mathrm{vd}(u_*)+\mathcal{O}(\delta)$.

We linearize~\eqref{eq:modifiedKlausmeier} about this front solution in a comoving frame using an ansatz  $(U,V) = (u_\mathrm{h},v_\mathrm{h})(\xi;\delta)+e^{\lambda t+ i\ell y}(u,v)(\xi)$ for $\ell\in\mathbb{R}$, which results in the eigenvalue problem 
\begin{align}
\begin{split}\label{eq:front_stability_problem}
\lambda u&= u_{\xi\xi}+\delta c_\mathrm{h}u_\xi-\ell^2u-\left(1+v_\mathrm{h}(\xi)^2\right)u-2u_\mathrm{h}(\xi)v_\mathrm{h}(\xi)v\\
\lambda v&= \delta^2v_{\xi\xi}+\delta c_\mathrm{h}v_\xi-\delta^2\ell^2v -mv+v_\mathrm{h}(\xi)^2\left(1-bv_\mathrm{h}(\xi)\right)u+u_\mathrm{h}(\xi)\left(2v_\mathrm{h}(\xi)-3bv_\mathrm{h}(\xi)^2\right)v.
\end{split}
\end{align}
Due to translation invariance, this eigenvalue problem has a solution when $\lambda=\ell=0$, with eigenfunction given by the derivative $(u_\mathrm{h}',v_\mathrm{h}')(\xi;\delta)$. For the purposes of this discussion, we focus only on this critical, marginal eigenvalue, and we assume that all other spectrum of the front for $\ell=0$ (that is, the spectrum corresponding to 1D longitudinal perturbations in the direction of propagation) is bounded away from the imaginary axis in the left half plane. 

Focusing on this critical eigenvalue, we consider its continuation for small $|\ell|$: as the eigenvalue problem only depends on $\ell$ through terms of $\mathcal{O}(\ell^2)$, we anticipate that this critical translation eigenvalue expands as
\begin{align}\label{eq:lambda_crit_front}
    \lambda_\mathrm{c}(\ell) =\lambda_{\mathrm{c},2}\ell^2+\mathcal{O}(\ell^4).
\end{align}
This eigenvalue describes the stability of the front to long wavelength perturbations transverse to the front, and the stability is thus determined by the sign of the coefficient $\lambda_{\mathrm{c},2}$. In a companion paper~\cite{CDLOR}, we have developed a procedure to compute this coefficient in a general class of two-component singularly perturbed reaction diffusion systems, which includes~\eqref{eq:modifiedKlausmeier} as an example, and we briefly describe the results here. 

We rewrite the stability problem~\eqref{eq:front_stability_problem} in the form
\begin{align}\label{eq:front_eigenvalue}
    \mathbb{L}\begin{pmatrix}u\\v \end{pmatrix}& = \lambda \begin{pmatrix}u\\v \end{pmatrix}+ \ell^2\begin{pmatrix}u\\\delta^2v \end{pmatrix}
\end{align}
where
\begin{align}
    \mathbb{L}& := \begin{pmatrix}\partial_{\xi\xi}+\delta c_\mathrm{h}\partial_\xi-\left(1+v_\mathrm{h}(\xi)^2\right) & -2u_\mathrm{h}(\xi)v_\mathrm{h}(\xi)\\v_\mathrm{h}(\xi)^2\left(1-bv_\mathrm{h}(\xi)\right) & \delta^2\partial_{\xi\xi}+\delta c_\mathrm{h}\partial_\xi+ u_\mathrm{h}(\xi)\left(2v_\mathrm{h}(\xi)-3bv_\mathrm{h}(\xi)^2\right) \end{pmatrix}, 
\end{align}
We now expand
\begin{align*}
\begin{pmatrix}u\\v\end{pmatrix}(\xi;\ell)  &= \begin{pmatrix}u_\mathrm{h}'\\v_\mathrm{h}'\end{pmatrix} (\xi) +\ell^2\begin{pmatrix}u_{2,\mathrm{c}}\\v_{2,\mathrm{c}}\end{pmatrix}(\xi)+\mathcal{O}\left(\ell^4\right).
\end{align*}
Substituting into~\eqref{eq:front_eigenvalue}, at leading order we have the eigenvalue problem
\begin{align}\label{eq:front_eigenvalue_2}
    \mathbb{L}\begin{pmatrix}u_{2,\mathrm{c}}\\v_{2,\mathrm{c}} \end{pmatrix}& = \lambda_{2,\mathrm{c}} \begin{pmatrix}u_\mathrm{h}'\\v_\mathrm{h}' \end{pmatrix}+\begin{pmatrix}u_\mathrm{h}'\\\delta^2 v_\mathrm{h}' \end{pmatrix}.
\end{align}
This leads to the Fredholm solvability condition
\begin{align}\label{eq:front_fredholm}
    \left\langle \lambda_{2,\mathrm{c}} \begin{pmatrix}u_\mathrm{h}'\\v_\mathrm{h}' \end{pmatrix}+ \begin{pmatrix}u_\mathrm{h}'\\\delta^2 v_\mathrm{h}' \end{pmatrix}, \begin{pmatrix}u^A_\mathrm{h}\\v^A_\mathrm{h} \end{pmatrix}\right \rangle &=0,
\end{align}
where $<U,V> = \int_{-\infty}^\infty U(\xi)V(\xi)\mathrm{d}\xi $, and  $(u^A_\mathrm{h}, v^A_\mathrm{h})(\xi;\delta)$ denotes the bounded solution to the adjoint equation 
\begin{align}
\mathbb{L}^A\begin{pmatrix}u\\v \end{pmatrix} = 0,
\end{align}
where
\begin{align}
    \mathbb{L}^A& := \begin{pmatrix}\partial_{\xi\xi}-\delta c_\mathrm{h}\partial_\xi-\left(1+v_\mathrm{h}(\xi)^2\right) & v_\mathrm{h}(\xi)^2\left(1-bv_\mathrm{h}(\xi)\right) \\-2u_\mathrm{h}(\xi)v_\mathrm{h}(\xi) & \delta^2\partial_{\xi\xi}-\delta c_\mathrm{h}\partial_\xi+ u_\mathrm{h}(\xi)\left(2v_\mathrm{h}(\xi)-3bv_\mathrm{h}(\xi)^2\right) \end{pmatrix}.
\end{align}
From this we obtain an expression for $\lambda_{2,\mathrm{c}}$
\begin{align}\label{eq:front_lambda2c}
  \lambda_{2,\mathrm{c}} &= -\frac{\left\langle \begin{pmatrix}u_\mathrm{h}'\\\delta^2 v_\mathrm{h}' \end{pmatrix}, \begin{pmatrix}u^A_\mathrm{h}\\v^A_\mathrm{h} \end{pmatrix}\right \rangle}{\left\langle  \begin{pmatrix}u_\mathrm{h}'\\v_\mathrm{h}' \end{pmatrix}, \begin{pmatrix}u^A_\mathrm{h}\\v^A_\mathrm{h} \end{pmatrix}\right \rangle}=- \frac{\int_{-\infty}^\infty(u_\mathrm{h}'u^A_\mathrm{h}+\delta^2 v_\mathrm{h}'v^A_\mathrm{h})\mathrm{d}\xi}{\int_{-\infty}^\infty (u_\mathrm{h}'u^A_\mathrm{h}+ v_\mathrm{h}'v^A_\mathrm{h})\mathrm{d}\xi}.
\end{align}

In~\cite{CDLOR}, it is shown that to leading order $\lambda_{2,\mathrm{c}}$ is given by
\begin{align}\label{eq:front_lambda2c_est}
    \lambda_{2,\mathrm{c}}&=\delta \frac{\int_{-\infty}^\infty v_\mathrm{vd}(\zeta)^2(1-v_\mathrm{vd}(\zeta))e^{c_\mathrm{vd} \zeta}v_\mathrm{vd}'(\zeta)\mathrm{d}\zeta}{u_*v_+(u_*)^2\int_{-\infty}^\infty e^{c_\mathrm{vd} \zeta}v_\mathrm{vd}'(\zeta)^2\mathrm{d}\zeta}\left(\int_{-\infty}^0u_{+,\infty}'(\xi)^2\mathrm{d}\xi+\int_0^\infty u_{0,\infty}'(\xi)^2\mathrm{d}\xi\right)+o(\delta)\\
    &>0, \nonumber
\end{align}
so that the traveling planar fronts are always unstable to long wavelength transverse perturbations. This has implications for the (in)stability of spot/gap solutions, as we will show in~\S\ref{sec:stability} using formal asymptotic methods that the spectrum for radial spots/gaps of sufficiently large radius is approximated by that of a nearby stationary front solution.

\section{Existence of radially symmetric spots and gaps}\label{sec:existence}
With the construction of the front solutions in~\S\ref{sec:fronts} in mind, we now focus on the construction of a vegetation spot solution, consisting of a single vegetation patch localized near $r=0$, with a single interface at some radius $r=r_I$ (to be specified), at which the profile transitions from the vegetated state in the core to the desert state in the far-field. The case of gaps is similar, and we will briefly outline the differences in~\S\ref{sec:existence_proofs}.

Throughout the analysis we treat $0<\delta\ll1$ as a singular perturbation parameter. At times, it will also be convenient to consider~\eqref{eq:slow}, which we refer to as the `slow' system, with respect to the rescaled radial coordinate $s=r/\delta$, which results in the system
\begin{align}
\begin{split}\label{eq:fast}
 u_s&=  \delta p\\
  p_s&= -\frac{p}{s}-\delta(a-u-uv^2)\\
v_s&= q\\
q_s&= -\frac{q}{s} +mv-uv^2(1-bv),
\end{split}
\end{align}
We refer to~\eqref{eq:fast} as the `fast' system.

The spot solution will be constructed as a perturbation from the singular limit structure associated with~\eqref{eq:fast}, and consists of three pieces: The core vegetated and far-field desert states are given as slow orbits which lie near equilibria on saddle-type slow manifolds $\mathcal{M}^0_\delta,\mathcal{M}^+_\delta$ within~\eqref{eq:fast} (to be defined below), while the interface between these states is given by a fast layer orbit between these slow manifolds which is inserted at a particular radius $r=r_I$. The construction has very similar geometry as in the construction of traveling fronts in~\S\ref{sec:fronts}, though with some complications due to the nonautonmous nature of the equation and the singularity at $r=0$. Additionally, since the spot interface is stationary, the speed is not available as a free parameter; however, the jump value $r=r_I$ can be thought of as a free parameter which is chosen in such a way in order to ensure that the stable manifold of the far-field desert equilibrium and the unstable manifold associated with the core vegetated states precisely intersect transversely across the fast jump.

\subsection{Far field region and fast transition}\label{sec:farfield}
In the far-field, we consider $r\in[ \bar{r},\infty)$ for arbitrary $\bar{r}>0$ fixed independently of $\delta>0$. Here we define a new variable $k:=1/r$ and similarly consider the region $k\in [0,\bar{k}]$ where $\bar{k}:=1/\bar{r}$ for the corresponding (autonomous) system
\begin{align}
\begin{split}\label{eq:slow_far}
 u_r&=  p\\
  p_r&= -kp-a+u+uv^2\\
\delta v_r&= q\\
\delta q_r&= -\delta k q +mv-uv^2(1-bv)\\
k_r &= -k^2,
\end{split}
\end{align}
which is a slow-fast system with two fast variables and three slow variables. 

\subsubsection{Slow manifolds away from the core}
Setting $\delta=0$, we see that~\eqref{eq:slow_far} admits a three dimensional critical manifold defined by
 {
\begin{align}\label{eq:M0}
\mathcal{M}_0 = \left\{(u,p,v,q,k)\in \mathbb{R}^4\times[0,\bar{k}]:q=0, mv=uv^2(1-bv)\right\}.
\end{align}}

Considering~\eqref{eq:slow_far} on the fast scale $s=r/\delta$, we have the equivalent system
\begin{align}
\begin{split}\label{eq:fast_far}
 u_s&=  \delta p\\
  p_s&= \delta(-kp-a+u+uv^2)\\
v_s&= q\\
q_s&= -\delta k q +mv-uv^2(1-bv)\\
k_s &= -\delta k^2,
\end{split}
\end{align}
where $k=(\delta s)^{-1}$. Setting $\delta=0$ in this system yields the layer problem
\begin{align}
\begin{split}\label{eq:layer}
v_s&= q\\
q_s&= mv-uv^2(1-bv),
\end{split}
\end{align}
in which the slow variables $(u,p,k)$ act as parameters. As in this case of the layer problem~\eqref{eq:1d_layer} associated with the traveling fronts in~\S\ref{sec:fronts}, this system admits up to three equilibria depending on $u$, given by $M_0(u):=(0,0)$, and if $u>4bm$, $M_\pm(u):=(v_\pm(u),0)$, where
\begin{align}
v_\pm(u) := \frac{1\pm\sqrt{1-\frac{4bm}{u}}}{2b}.
\end{align}
When $u=4bm$, the two equilibria $M_+(4bm)=M_-(4bm)$ coincide. Computing the linearization
\begin{align}
J_\mathrm{layer} = \begin{pmatrix}  0 &1\\ m-2uv+3buv^2 & 0 \end{pmatrix},
\end{align}
we find that this corresponds to~\eqref{eq:layer_lin} in the stationary case $c=0$, so that for $m,b>0$, the fixed points $M_0$ and $M_+(u),u>4bm$ are always of saddle type, while $M_-(u)$ is a center for $u>4bm$. The equilibrium $M_+(4bm)=M_-(4bm)$ is nonhyperbolic with a double-zero eigenvalue. Taken together, the set of equilibria of the layer equation~\eqref{eq:layer} corresponds to the critical manifold $\mathcal{M}_0=\mathcal{M}^0_0\cup \mathcal{M}^-_0 \cup \mathcal{F}\cup \mathcal{M}^+_0$ as in~\eqref{eq:M0branches}, where $\mathcal{M}^\pm_0$ meet along the nonhyperbolic fold curve $\mathcal{F}:=\{q=0, v =1/2b, u=4bm\}$. The manifolds $\mathcal{M}^0_0$ and $\mathcal{M}^+_0$ are normally hyperbolic, while $\mathcal{M}^-_0$ is not. We note that compared with the analysis in~\S\ref{sec:fronts}, these manifolds are actually subsets of a $5$-dimensional ambient space due to the additional slow variable $k$, but we slightly abuse notation and continue to refer to these as $\mathcal{M}^*_0,$ for $*=0,\pm$, as they are defined by the same algebraic conditions.

Further, any compact portion of $\mathcal{M}^0_0$ or $\mathcal{M}^+_0$ admits local (un)stable manifolds $\mathcal{W}^{\mathrm{s},\mathrm{u}}(\mathcal{M}^*_0),$ for $*=0,+$, comprised of the union of the local (un)stable manifolds $\mathcal{W}^{\mathrm{s},\mathrm{u}}(M_*(u))$ of the equilibria $M_*(u),$ for $ *=0,+$.

To obtain the reduced dynamics on the critical manifolds, we consider~\eqref{eq:slow_far} for $\delta=0$, given by 
\begin{align}
\begin{split}\label{eq:reduced}
 u_r&=  p\\
  p_r&= -kp-a+u+uv^2\\
k_r &= -k^2,
\end{split}
\end{align}
where we substitute $v=0$ (in the case of $\mathcal{M}^0_0$) or $v=v_\pm(u)$ (in the case of $\mathcal{M}^\pm_0$) into the $p$-equation.

By standard results of geometric singular perturbation theory, (restricting to the region $u>4bm$ in the case of $\mathcal{M}^+_0$) for all sufficiently small $\delta>0$ any compact portions of the normally hyperbolic invariant manifolds $\mathcal{M}^0_0$ and $\mathcal{M}^+_0$ perturb to three-dimensional locally invariant manifolds $\mathcal{M}^0_\delta$ and $\mathcal{M}^+_\delta$, which are $C^1$-$\mathcal{O}(\delta)$-close to their singular counterparts. The slow flow on $\mathcal{M}^0_\delta$ and $\mathcal{M}^+_\delta$ is an $\mathcal{O}(\delta)$-perturbation of the reduced flow~\eqref{eq:reduced}. Similarly, the local (un)stable manifolds $\mathcal{W}^{\mathrm{s},\mathrm{u}}(\mathcal{M}^*_0),*=0,+$ perturb to four-dimensional locally invariant manifolds $\mathcal{W}^{\mathrm{s},\mathrm{u}}(\mathcal{M}^*_\delta),$ for $*=0,+$ which are again $\mathcal{O}(\delta)$-close to their singular counterparts, and consist of the fast (un)stable fibers associated with orbits which lie on the slow manifolds $\mathcal{M}^0_\delta$ and $\mathcal{M}^+_\delta$.

\subsubsection{Fast transition layers}\label{sec:fast_jumps}
We aim to construct fast transition layers consisting of fast jumps between the critical manifolds $\mathcal{M}^0_0$ and $\mathcal{M}^+_0$. We return to the fast system~\eqref{eq:fast_far} and the associated layer problem~\eqref{eq:layer} for values of $k\in[0,\bar{k}]$. We note that the layer problem~\eqref{eq:layer} corresponds to~\eqref{eq:1d_layer} in the stationary case $c=0$. We recall from~\S\ref{sec:fronts} that for values of $u>4mb$, this problem has three equilibria, $M_0(u)$ and $M_\pm(u)$, and at the critical value $u=u_\mathrm{f}:=\frac{9bm}{2}$, the layer problem admits a heteroclinic loop between $M_0(u_\mathrm{f})$ and $M_+(u_\mathrm{f})$, while a homoclinic orbit to either $M_0(u)$ or $M_+(u)$ exists for values of $u>u_\mathrm{f}$ or $u<u_\mathrm{f}$, respectively. As in~\S\ref{sec:fronts_stationary}, we assume that $a>u_\mathrm{f}$ or equivalently, $\frac{a}{m}>\frac{9b}{2}$, so that the equilibrium $P_0$ lies above the heteroclinic loop in the layer problem, and we further assume $4b+\frac{1}{b}<\frac{a}{m}<\frac{9b}{2}+\frac{2}{b}$, so that the equilibrium $P_2$ lies on $\mathcal{M}^+_0$ and below the heteroclinic loop.

The two heteroclinic orbits comprising the heteroclinic loop provide an opportunity for orbits to jump between the manifolds $\mathcal{M}^0_0$ and $\mathcal{M}^+_0$, and effectively transition between the desert/vegetated states. We recall from~\S\ref{sec:fronts_stationary} that these two orbits $\phi_\mathrm{dv}(s)=(v_\mathrm{dv}, q_\mathrm{dv})(s)$ and $\phi_\mathrm{vd}(s)=(v_\mathrm{vd}, q_\mathrm{vd})(s)$ are given explicitly as
\begin{align}
v_\mathrm{dv}(s)&:=\frac{1}{3b}\left(1+\tanh\left(\frac{\sqrt{m}}{2} s \right)\right)\\
v_\mathrm{vd}(s)&:=\frac{1}{3b}\left(1-\tanh\left(\frac{\sqrt{m}}{2} s \right)\right),
\end{align}
and
\begin{align}
q_\mathrm{dv}(s) &= \frac{3b\sqrt{m}}{2}v_\mathrm{dv}(s)\left(v_\mathrm{dv}(s)-\frac{2b}{3}\right)\\
q_\mathrm{vd}(s) &= -\frac{3b\sqrt{m}}{2}v_\mathrm{vd}(s)\left(v_\mathrm{vd}(s)-\frac{2b}{3}\right),
\end{align}
see Figure~\ref{fig:singular_front}. As in~\S\ref{sec:fronts_stationary}, $\phi_\mathrm{dv}(s)$ represents the desert-to-vegetation state transition front which jumps from $\mathcal{M}^0_0$ and $\mathcal{M}^+_0$, while $\phi_\mathrm{vd}(s)$ represents the vegetation-to-desert state transition front which jumps from $\mathcal{M}^+_0$ and $\mathcal{M}^0_0$. These singular orbits serve as candidate interfaces between the desert and vegetated states. 

Taking the union over values of the slow variables $(p,r)$, or equivalently values of $(p,k)$, the orbits $\phi_\mathrm{dv}$ form a two-parameter family of orbits lying in the intersection of the four-dimensional manifolds $\mathcal{W}^{\mathrm{u}}(\mathcal{M}^0_0)$ and $\mathcal{W}^{\mathrm{s}}(\mathcal{M}^+_0)$, and likewise the orbits $\phi_\mathrm{vd}$ form a two-parameter family of intersections between $\mathcal{W}^{\mathrm{u}}(\mathcal{M}^+_0)$ and $\mathcal{W}^{\mathrm{s}}(\mathcal{M}^0_0)$. In order to determine that these intersections are non-degenerate and persist in a suitable sense for $\delta>0$, we show transversality of the intersections in the remaining slow variable $u$.

\begin{Lemma} \label{lem:front_tr}
Consider~\eqref{eq:fast_far} for $\delta=0$ and $b,m>0$, and fix $\bar{p},\bar{k}>0$. The following hold:
\begin{enumerate}[(i)]
\item \label{lem:front_tri} The manifolds $\mathcal{W}^{\mathrm{u}}(\mathcal{M}^0_0)$ and $\mathcal{W}^{\mathrm{s}}(\mathcal{M}^+_0)$ intersect transversely along the three-dimensional manifold 
\begin{align}
\mathcal{H}_{\mathrm{dv}} = \bigcup_{|p|\leq \bar{p}, k\in[0,\bar{k}]}\phi_\mathrm{dv}.
\end{align}
\item \label{lem:front_trii} The manifolds $\mathcal{W}^{\mathrm{u}}(\mathcal{M}^+_0)$ and $\mathcal{W}^{\mathrm{s}}(\mathcal{M}^0_0)$ intersect transversely along the three-dimensional manifold 
\begin{align}
\mathcal{H}_{\mathrm{vd}} = \bigcup_{|p|\leq \bar{p}, k\in[0,\bar{k}]}\phi_\mathrm{vd}.
\end{align}
\end{enumerate}
\end{Lemma}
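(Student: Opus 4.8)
The plan is to reduce the claim to a single Melnikov-type nondegeneracy condition, which is then checked explicitly using the Hamiltonian structure of the layer equation~\eqref{eq:layer}. The first observation is that in~\eqref{eq:fast_far} with $\delta=0$ the slow variables $(u,p,k)$ are all frozen and the $(v,q)$-dynamics is governed by~\eqref{eq:layer}, in which $p$ and $k$ do not appear at all. Hence the heteroclinic $\phi_\mathrm{dv}$ -- which exists precisely when $u=u_\mathrm{f}=\frac{9bm}{2}$, where $v_+(u_\mathrm{f})=\frac{2}{3b}$ -- persists unchanged for every $(p,k)$, so $\mathcal{H}_\mathrm{dv}$ is automatically contained in $\mathcal{W}^{\mathrm{u}}(\mathcal{M}^0_0)\cap\mathcal{W}^{\mathrm{s}}(\mathcal{M}^+_0)$, and the whole transversality statement, uniformly over $|p|\leq\bar{p}$ and $k\in[0,\bar{k}]$, reduces to a statement about~\eqref{eq:layer} with the single extra parameter $u$.

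Fix a base point $x_0=\phi_\mathrm{dv}(s_0)$ lying over $(u,p,k)=(u_\mathrm{f},p_0,k_0)$ with $s_0$ chosen so that $\phi_\mathrm{dv}(s_0)$ is not an equilibrium, i.e.\ $q_\mathrm{dv}(s_0)\neq0$. To prove $T_{x_0}\mathcal{W}^{\mathrm{u}}(\mathcal{M}^0_0)+T_{x_0}\mathcal{W}^{\mathrm{s}}(\mathcal{M}^+_0)=\mathbb{R}^5$ I would exhibit five independent vectors in the sum: $\partial_p$ and $\partial_k$, which lie in both tangent spaces since $p,k$ are passive; the flow direction $\phi_\mathrm{dv}'(s_0)$ in the $(v,q)$-plane; and, from each manifold, a vector $\partial_u+w_\mathrm{u}$ resp.\ $\partial_u+w_\mathrm{s}$ with $w_\mathrm{u},w_\mathrm{s}$ in the $(v,q)$-plane, obtained by linearizing how the unstable fiber over $\mathcal{M}^0_0$ (resp.\ the stable fiber over $\mathcal{M}^+_0$) tilts as $u$ is varied through $u_\mathrm{f}$. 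Their span is all of $\mathbb{R}^5$ precisely when $w_\mathrm{u}-w_\mathrm{s}$ is not parallel to $\phi_\mathrm{dv}'(s_0)$ in the $(v,q)$-plane, i.e.\ when the $u$-splitting of the two manifolds along $\phi_\mathrm{dv}$ does not vanish.

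That splitting is computed via the conserved energy $E(v,q;u)=\frac12 q^2-F(v;u)$ of~\eqref{eq:layer}, where $F(v;u)=\int_0^v\bigl(mw-uw^2(1-bw)\bigr)\,\mathrm{d}w$. Since $\mathcal{W}^{\mathrm{u}}(M_0(u))$ lies on the level $E=E(M_0;u)=0$ and $\mathcal{W}^{\mathrm{s}}(M_+(u))$ on the level $E=E(M_+(u);u)=-F(v_+(u);u)$, a heteroclinic requires $F(v_+(u);u)=0$, which holds at $u=u_\mathrm{f}$, and the $u$-splitting along a section transverse to $\phi_\mathrm{dv}$ is, to leading order, a nonzero multiple of
\begin{align*}
\frac{\mathrm{d}}{\mathrm{d}u}F(v_+(u);u)\Big|_{u=u_\mathrm{f}}=\int_0^{2/(3b)}\bigl(-w^2+bw^3\bigr)\,\mathrm{d}w=-\frac{4}{81 b^3}\neq 0,
\end{align*}
where the critical-point identity $F_v(v_+(u);u)=0$ was used to drop the $v_+'(u)$ term. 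This quantity equals, up to a positive factor, the Melnikov integral $\int_{-\infty}^\infty\bigl(-v_\mathrm{dv}(s)^2+bv_\mathrm{dv}(s)^3\bigr)q_\mathrm{dv}(s)\,\mathrm{d}s$ pairing $w_\mathrm{u}-w_\mathrm{s}$ against the normal $\nabla_{(v,q)}E$ of $\phi_\mathrm{dv}$. Thus the five vectors above are independent, giving (i); and since a transverse intersection of a $4$-manifold and a $4$-manifold in $\mathbb{R}^5$ is a $3$-manifold containing the $3$-manifold $\mathcal{H}_\mathrm{dv}$, the two coincide in a neighbourhood of $\mathcal{H}_\mathrm{dv}$. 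Part (ii) then follows from (i) by the reversibility $(s,v,q)\mapsto(-s,v,-q)$ of~\eqref{eq:layer}, which fixes $M_0$ and $M_+$, interchanges $\mathcal{W}^{\mathrm{u}}\leftrightarrow\mathcal{W}^{\mathrm{s}}$ at each, acts trivially on $(u,p,k)$, and maps $\phi_\mathrm{dv}$ onto $\phi_\mathrm{vd}$.

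The main obstacle is not the energy computation but the bookkeeping that identifies the tangent vectors $\partial_u+w_\mathrm{u}$ and $\partial_u+w_\mathrm{s}$ with that Melnikov integral: one linearizes the layer flow about $\phi_\mathrm{dv}$, notes that the bounded solution of the variational equation is spanned by $\phi_\mathrm{dv}'$, produces the complementary growing solution by reduction of order, and uses variation of parameters to read off the leading-order $u$-dependence of the fibers; the Hamiltonian structure is exactly what makes the relevant adjoint bounded solution proportional to $q_\mathrm{dv}$ and collapses the integral to $\int_0^{v_+(u_\mathrm{f})}\partial_u\!\bigl(mw-uw^2(1-bw)\bigr)\,\mathrm{d}w$. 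Since none of this involves $p$ or $k$, all estimates are automatically uniform on $|p|\leq\bar{p}$, $k\in[0,\bar{k}]$, and an alternative, entirely explicit route is available because $v_\mathrm{dv}$ and $q_\mathrm{dv}$ are known in closed form.
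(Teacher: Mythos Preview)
Your proposal is correct and follows essentially the same strategy as the paper: reduce transversality to the nonvanishing of a Melnikov-type $u$-splitting of the heteroclinic connection in the layer problem~\eqref{eq:layer}, then verify that splitting is nonzero. The paper carries this out via the adjoint bounded solution and the standard Melnikov integral $\int D_uF\cdot\psi_\mathrm{dv}$, arriving at a sign-definite integrand; you instead exploit the Hamiltonian structure directly, observing that the heteroclinic condition is $F(v_+(u);u)=0$ and computing $\partial_u F(v_+(u);u)\big|_{u=u_\mathrm{f}}=-\tfrac{4}{81b^3}$ in closed form, which is a clean shortcut yielding the same conclusion. Your use of the reversibility $(s,v,q)\mapsto(-s,v,-q)$ to deduce (ii) from (i) is also a nice streamlining over the paper's ``nearly identical'' repetition.
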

\begin{proof}
We focus on the statement~\ref{lem:front_tri}, as the proof of~\ref{lem:front_trii} is nearly identical. To prove transversality, it remains to show that the intersection breaks transversely when varying the remaining slow variable $u$ near $u=u_\mathrm{f}=\frac{9bm}{2}$. We accomplish this by computing the splitting of the manifolds $\mathcal{W}^{\mathrm{u}}(\mathcal{M}^0_0)$ and $\mathcal{W}^{\mathrm{s}}(\mathcal{M}^+_0)$ to leading order in $|u- u_\mathrm{f}|$ via a Melnikov-type computation.

Given $(p,k)\in [-\bar{p}, \bar{p}]\times [0,\bar{k}]$, the front $\phi_\mathrm{dv}(s)=(v_\mathrm{dv}, q_\mathrm{dv})(s)$ which lies in the intersection $\mathcal{W}^{\mathrm{u}}(\mathcal{M}^0_0)\cap \mathcal{W}^{\mathrm{s}}(\mathcal{M}^+_0)$ is a solution of the fast layer equation~\eqref{eq:layer} at $u=u_\mathrm{f}$. The adjoint equation associated with the linearization of~\eqref{eq:layer} about $\phi_\mathrm{dv}$ at $u=u_\mathrm{f}$ is given by
\begin{align}
\begin{split}\label{eq:layer_adj}
\begin{pmatrix} v_s \\ q_s\end{pmatrix} = \begin{pmatrix}0 &  -m+u_\mathrm{f}\left(2v_\mathrm{dv}(s)-3bv_\mathrm{dv}(s)^2\right)\\ -1 & 0 \end{pmatrix}\begin{pmatrix} v \\ q\end{pmatrix},
\end{split}
\end{align}
and admits a unique bounded solution (up to multiplication by a constant) $\psi_\mathrm{dv}(s):= \left(-q_\mathrm{dv}(s), v_\mathrm{dv}(s)\right)$. To leading order, the splitting distance of the manifolds $\mathcal{W}^{\mathrm{u}}(\mathcal{M}^0_0)\cap \mathcal{W}^{\mathrm{s}}(\mathcal{M}^+_0)$ is determined to leading order in $|u-u_\mathrm{f}|$ by the Melnikov integral
\begin{align}
M_\mathrm{dv}^u := \int_{-\infty}^\infty D_uF(\phi_\mathrm{dv}(s);u_\mathrm{f})\cdot\psi_\mathrm{dv}(s)\mathrm{d}s,
\end{align}
where $F(v,q;u)$ denotes the right-hand-side of~\eqref{eq:layer}. We compute
\begin{align}
M_\mathrm{dv}^u := \int_{-\infty}^\infty -v_\mathrm{dv}(s)^3(1-bv_\mathrm{dv}(s))\mathrm{d}s<0,
\end{align}
from which we determine that the intersection $\mathcal{W}^{\mathrm{u}}(\mathcal{M}^0_0)\cap \mathcal{W}^{\mathrm{s}}(\mathcal{M}^+_0)$ is transverse in varying the slow variable $u\approx u_\mathrm{f}$.

The proof of~\ref{lem:front_trii} proceeds similarly, and transversality is then determined by the Melnikov coefficient
\begin{align}
M_\mathrm{vd}^u := \int_{-\infty}^\infty D_uF(\phi_\mathrm{vd}(s);u_\mathrm{f})\cdot\psi_\mathrm{vd}(s)\mathrm{d}s,
\end{align}
where $\psi_\mathrm{vd}(s):= \left(-q_\mathrm{vd}(s), v_\mathrm{vd}(s)\right)$. In that case, we similarly find that
\begin{align}
M_\mathrm{vd}^u := \int_{-\infty}^\infty -v_\mathrm{vd}(s)^3(1-bv_\mathrm{vd}(s))\mathrm{d}s<0.
\end{align}

\end{proof}

\subsubsection{Far-field stable manifold $\mathcal{W}^{\mathrm{s}}(\mathcal{B}_\delta^\mathrm{far})$}\label{sec:farfieldmanifold}

We now construct the set of orbits which remain bounded in the far-field, and in particular those which converge to the desert state $v=0$; these orbits will be asymptotic to the manifold $\mathcal{M}^0_\delta$.

We first examine the reduced flow~\eqref{eq:reduced} on $\mathcal{M}^0_0$, given by
\begin{align}
\begin{split}\label{eq:reduced_0}
 u_r&=  p\\
  p_r&= -kp-a+u\\
k_r &= -k^2.
\end{split}
\end{align}
This system admits a single equilibrium $p^\mathrm{far}_0=(a,0,0)$ corresponding to the desert steady state of~\eqref{eq:modifiedKlausmeier} in the far-field limit $r\to \infty$. This fixed point is nonhyperbolic when considered as a fixed point of~\eqref{eq:reduced_0}, but is of saddle-type when restricted to the invariant plane $k=0$. Within $\mathcal{M}^0_0$, this fixed point admits a two-dimensional center-stable manifold $\mathcal{B}_0^\mathrm{far}$ representing the set of solutions $(u,p)(r)$ of~\eqref{eq:reduced_0} which remain bounded as $r\to \infty$.

\begin{figure}
\centering
\includegraphics[width=0.45\linewidth]{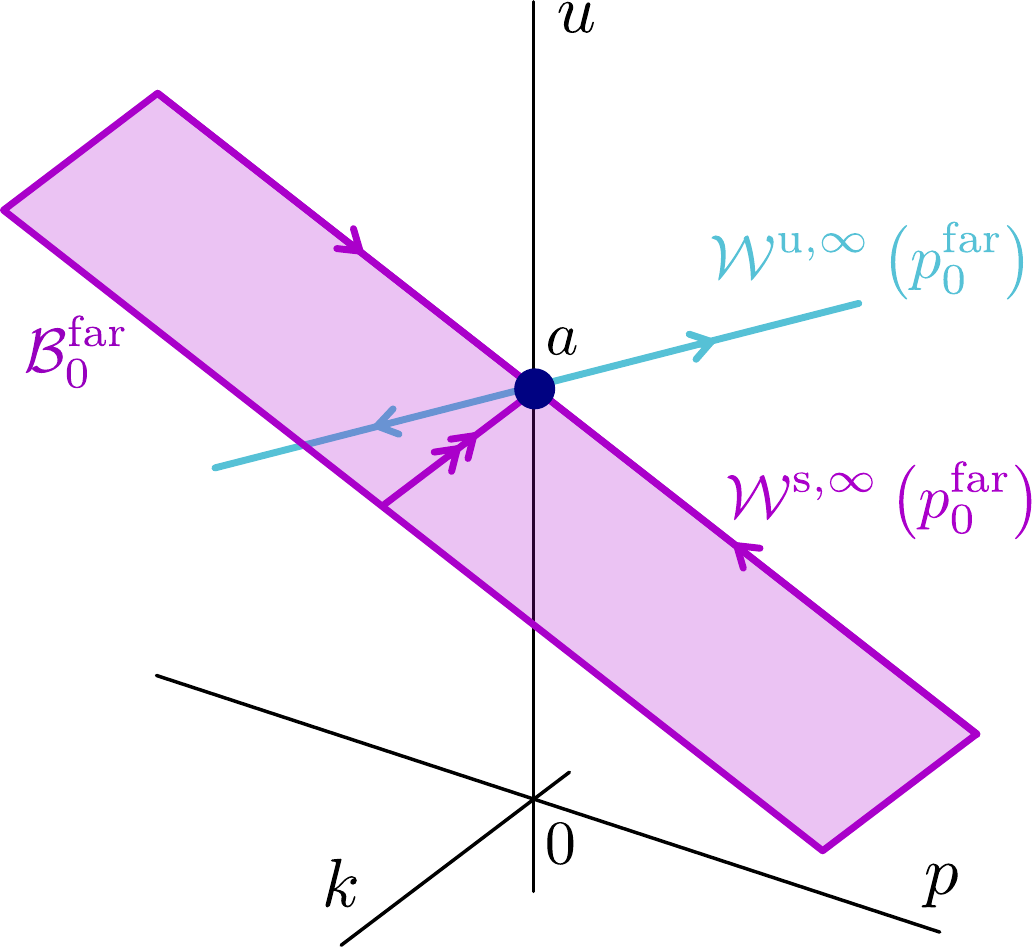}
\caption{Shown are the dynamics of~\eqref{eq:reduced_0} for $\delta=0$ on the far-field manifold $\mathcal{M}^0_0$. The manifold $\mathcal{B}^\mathrm{far}_0$ is the set of all solutions which converge to the equilibrium $p^\mathrm{far}_0=(a,0,0)$ and remain bounded in the far field as $k\to0$. }
\label{fig:far_field}
\end{figure}

To obtain a more explicit description of this set, we instead express~\eqref{eq:reduced_0} as the linear equation
\begin{align}
\begin{split}\label{eq:reduced_0_bessel}
 u_{rr}+\frac{u_r}{r}-u+a&=  0,\\
\end{split}
\end{align}
the solutions of which are given in terms of the modified Bessel functions $I_0,K_0$ of the first and second kind. In particular, the unique solution of~\eqref{eq:reduced_0_bessel} which is bounded as $r\to \infty$ and satisfies $u(\bar{r})=\bar{u}$ for given $\bar{u}\in\mathbb{R}$ and $\bar{r}>0$ is given by
\begin{align}
u^\mathrm{far}_0(r; \bar{u},\bar{r}) = a+\frac{\bar{u}-a}{K_0(\bar{r})}K_0(r).
\end{align}

For any $\bar{r}>0$, we can therefore express $\mathcal{B}_0^\mathrm{far}$ as
\begin{align}\label{eq:bfar}
\begin{split}
\mathcal{B}_0^\mathrm{far} &= \left\{(u,p,k)\in\mathbb{R}^2\times [0,\bar{k}]: \begin{pmatrix}u\\p   \end{pmatrix} = \begin{pmatrix}a+\frac{\bar{u}-a}{K_0(\bar{r})}K_0(1/k)\\ \frac{a-\bar{u}}{K_0(\bar{r})}K_1(1/k)\end{pmatrix},  \bar{u}\in\mathbb{R} \right\}\\
&=\left\{(u,p,k)\in\mathbb{R}^2\times [0,\bar{k}]: \begin{pmatrix}u\\p   \end{pmatrix} = \begin{pmatrix}a+cK_0(1/k)\\ -cK_1(1/k)\end{pmatrix},  c\in\mathbb{R} \right\},
\end{split}
\end{align}
where $K_1(z)=-K_0'(z)$, and we note that since $K_0(z), K_1(z)\to0$ exponentially as $|z|\to\infty$, the quantities $K_0(1/k), K_1(1/k)$ are well defined (and converge to zero exponentially) as $k\to0$.

Within the stable manifold $\mathcal{W}^{\mathrm{s}}(\mathcal{M}^0_0)$ of $\mathcal{M}^0_0$, we can construct the stable manifold of $\mathcal{B}_0^\mathrm{far}$ as the set $\mathcal{W}^{\mathrm{s}}(\mathcal{B}_0^\mathrm{far})$ of stable fibers over trajectories in $\mathcal{B}_0^\mathrm{far}$. This set comprises the singular limit of all solutions which are bounded in the far-field. The set of orbits $\mathcal{B}_0^\mathrm{far}$ perturbs within $\mathcal{M}^0_0$ for sufficiently small $\delta>0$ to a two-dimensional manifold $\mathcal{B}_\delta^\mathrm{far}$ consisting of orbits within $\mathcal{M}^0_\delta$ which are bounded as $r\to \infty$ and in particular converge to the equilibrium $p^\mathrm{far}_0=(a,0,0)$. Likewise, as a subset of $\mathcal{W}^{\mathrm{s}}(\mathcal{M}^0_0)$, the manifold $\mathcal{W}^{\mathrm{s}}(\mathcal{B}_0^\mathrm{far})$ perturbs to a three-dimensional invariant manifold $\mathcal{W}^{\mathrm{s}}(\mathcal{B}_\delta^\mathrm{far})\subset\mathcal{W}^{\mathrm{s}}(\mathcal{M}^0_\delta) $ consisting of stable fibers lying over trajectories within $\mathcal{B}_\delta^\mathrm{far}\subset \mathcal{M}^0_\delta$.

The manifold $\mathcal{W}^{\mathrm{s}}(\mathcal{B}_\delta^\mathrm{far})$ thus describes the set of solutions which remain bounded as $r\to \infty$ and in particular those converging to the homogeneous equilibrium $P_2$ of~\eqref{eq:fast}. In light of the results of Lemma~\ref{lem:front_tr} in~\S\ref{sec:fast_jumps}, the perturbed manifolds $\mathcal{W}^{\mathrm{u}}(\mathcal{M}^+_\delta)\cap \mathcal{W}^{\mathrm{s}}(\mathcal{M}^0_\delta)$ intersect transversely in a three-dimensional manifold which lies within $\mathcal{O}(\delta)$ of the subspace $u=u_\mathrm{f}$. Viewed as a three-dimensional submanifold of $\mathcal{W}^{\mathrm{s}}(\mathcal{M}^0_\delta)$, the manifold $\mathcal{W}^{\mathrm{s}}(\mathcal{B}_\delta^\mathrm{far})$ is an $\mathcal{O}(\delta)$-perturbation of the singular manifold $\mathcal{W}^{\mathrm{s}}(\mathcal{B}_0^\mathrm{far})$ consisting of stable fibers over trajectories within $\mathcal{B}_0^\mathrm{far}\subset \mathcal{M}^0_0$ defined as in~\eqref{eq:bfar}. The manifold $\mathcal{W}^{\mathrm{s}}(\mathcal{B}_0^\mathrm{far})$ therefore transversely intersects $\mathcal{W}^{\mathrm{u}}(\mathcal{M}^+_0)$ in a two-dimensional manifold $\mathcal{H}^\mathrm{far}_0 \subset \mathcal{H}_{\mathrm{vd}} $ consisting of the orbits
\begin{align}
\mathcal{H}^\mathrm{far}_0 = \mathcal{H}_{\mathrm{vd}}\cap\left\{ p = \frac{a-u_\mathrm{f}}{K_0(1/k)}K_1(1/k), k\in[0,\bar{k}]\right\}
\end{align}
within the subspace $\{u=u_\mathrm{f}\}$. This transverse intersection persists for all sufficiently small $\delta>0$, with $\mathcal{W}^{\mathrm{s}}(\mathcal{B}_\delta^\mathrm{far})$ transversely intersecting $\mathcal{W}^{\mathrm{u}}(\mathcal{M}^+_\delta)$ in a two-dimensional manifold $\mathcal{H}^\mathrm{far}_\delta$ which lies within $\mathcal{O}(\delta)$ of $\mathcal{H}^\mathrm{far}_0 $, and likewise $\mathcal{O}(\delta)$-close to the subspace $\{u=u_\mathrm{f}\}$.

\subsection{The core region}

In this section, we construct a three-dimensional manifold of orbits which remain bounded and converge to a set of uniformly vegetated states as $r\to 0$.

\subsubsection{The center-unstable core manifold $\mathcal{W}^\mathrm{cu}_\delta(\mathcal{C})$}
For the core region, we consider $r\in[0,r_\mathrm{c}]$, where $r_\mathrm{c}=\delta s_\mathrm{c}$, or equivalently $s\in[0,s_\mathrm{c}]$, for some $s_\mathrm{c}>0$ fixed independently of $\delta>0$. We use a blow-up rescaling $z=\log s$ to obtain the dynamics in the core region as
\begin{align}
\begin{split}\label{eq:core}
 u_z&=  \delta sp\\
  p_z&= -p-\delta s(a-u-uv^2)\\
v_z&= sq\\
q_z&= -q +s\left(mv-uv^2(1-bv)\right)\\
s_z&= s.
\end{split}
\end{align}
Note that this system admits a two-dimensional manifold of equilibria at $s=0$ defined by $\mathcal{E}=\{p=q=s=0\}$. When $\delta=0$,~\eqref{eq:core} becomes
\begin{align}
\begin{split}\label{eq:core0}
 u_z&=  0\\
  p_z&= -p\\
v_z&= sq\\
q_z&= -q +s\left(mv-uv^2(1-bv)\right)\\
s_z&= s,
\end{split}
\end{align}
and near values of $(u,v)$ satisfying $mv-uv^2(1-bv)=0$, solutions of~\eqref{eq:core0} which are bounded as $z\to -\infty$ can be expanded in terms of modified Bessel functions as follows. For given $u>4bm$, there are three solutions $v=v_*(u)$ of $mv-uv^2(1-bv)=0$, namely $v_*(u)=0$ or $v_*(u)=v_\pm(u)$. Fixing one of these choices, we consider $v\approx v_*(u)$, so that $v=v_*(u)+\bar{v}$, where $|\bar{v}|\ll1$ is assumed small.

In the case of spots, we are interested in solutions near the vegetated state in the core, hence we take $v_*(u)=v_+(u)$. Substituting into~\eqref{eq:core0}, we obtain
\begin{align}
\begin{split}\label{eq:core0_loc}
 u_z&=  0\\
  p_z&= -p\\
\bar{v}_z&= sq\\
q_z&= -q +s\left(\kappa\bar{v}+\mathcal{O}(\bar{v}^2)\right)\\
s_z&= s,
\end{split}
\end{align}
where $\kappa:= ubv_+(u)(v_+(u)-v_-(u))$. Our aim is to construct solutions of~\eqref{eq:core} which remain bounded as $z\to -\infty$ ($s\to 0$). In view of~\eqref{eq:core0}, when $\delta=0$, any such solution must satisfy $p\equiv 0$, while the equation for $(v,q)$ can be re-expressed in terms of the fast variable $s$ as
\begin{align}
\label{eq:core_bessel}
\bar{v}_{ss}+\frac{\bar{v}_s}{s}-\kappa\bar{v}+\mathcal{O}(\bar{v}^2)= 0,
\end{align}
which, at the linear level, is a zero-order Bessel-type equation whose solutions can be expressed as linear combinations of the modified Bessel functions $I_0(\sqrt{\kappa} s), K_0(\sqrt{\kappa} s)$ of the first and second kind. The function $I_0(\zeta)$ is bounded at $\zeta=0$, while $K_0$ diverges logarithmically. 

Therefore, given $u_0>4mb$ and $s_\mathrm{c}>0$, we linearize~\eqref{eq:core} about the solution  {$(u,p,v,q,s)=(u_0,0,v_+(u_0),0,e^{z})$}, integrate, and solve the resulting fixed-point equation in terms of the Bessel functions $I_0(\cdot), K_0(\cdot)$. Defining the subset $\mathcal{C}:=\{p=q=s=0, v=v_+(u), u>4bm\}\subset \mathcal{E}$, this allows us to construct a local three-dimensional center-unstable manifold $\mathcal{W}^\mathrm{cu}_\delta(\mathcal{C})$ of solutions which are bounded as $s\to0$ ($z\to -\infty$), and in particular converge to $\mathcal{C}$ as $s\to0$. This manifold admits the expansion
\begin{align}\label{eq:wcu_expansion}
\mathcal{W}^\mathrm{cu}_\delta(\mathcal{C})=\left\{ (u,p,v,q,s)\in \mathbb{R}^5: \begin{pmatrix} u \\ p \\ v \\q \end{pmatrix} = \begin{pmatrix} u_0+\mathcal{O}(\delta) \\ \mathcal{O}(\delta)\\ v_+(u_0)+cI_0\left(\sqrt{\kappa}s\right)+\mathcal{O}(\delta+c^2)\\ c\sqrt{\kappa}I_1\left(\sqrt{\kappa}s\right)+\mathcal{O}(\delta+c^2)\end{pmatrix}, u_0>4bm, 0\leq s \leq s_\mathrm{c}, |c|\leq c_0\right\}
 \end{align}
for sufficiently small $|c_0|$ and $\delta\ll 1$. Here $I_1=I_0'$ is the first-order modified Bessel function of the first kind.  {The manifold $\mathcal{W}^\mathrm{cu}_\delta(\mathcal{C})$ is parameterized by $u_0$ and $c$, both of which will be selected by intersecting with the three-dimensional far-field manifold $\mathcal{W}^{\mathrm{s}}(\mathcal{B}_\delta^\mathrm{far})$ to obtain a spot solution in the full five-dimensional phase space.}

\subsubsection{Core transition region}
In the previous subsection, we constructed the core center-unstable manifold $\mathcal{W}^\mathrm{cu}_\delta(\mathcal{C})$ of solutions which remain bounded as $s\to0$. Given $s_\mathrm{c}>0$, we obtained expansions for the manifold $\mathcal{W}^\mathrm{cu}_\delta(\mathcal{C})$ valid for $0\leq s\leq s_\mathrm{c}$, provided $\delta$ is taken sufficiently small. We now aim to track this manifold into the region $r=\delta s=\mathcal{O}(1)$.

Hence we consider $r\in[\delta s_\mathrm{c}, r_0]$ where $s_\mathrm{c}$ is as above, fixed large independently of $\delta$, and $r_0>0$ is fixed independently of $\delta$, and $\delta$ is taken sufficiently small. We define the quantity $\tilde{\delta}:=\delta s_\mathrm{c}$, and we note that since $s_\mathrm{c}\gg1$ will be fixed independently of $\delta$, in the limit $\delta \to 0$ we have that $\tilde{\delta}=\mathcal{O}(\delta)$ so that $\tilde{\delta}$ can be bounded as small as desired. We return to the fast system~\eqref{eq:fast}, appending an equation for $r$, which results in the following system.
\begin{align}
\begin{split}\label{eq:fast_sc}
 u_s&=  \frac{\tilde{\delta}}{s_\mathrm{c}} p\\
  p_s&= -\frac{\tilde{\delta} p}{s_\mathrm{c}r}- \frac{\tilde{\delta}}{s_\mathrm{c}}(a-u-uv^2)\\
v_s&= q\\
q_s&= -\frac{\tilde{\delta} q}{s_\mathrm{c}r} +mv-uv^2(1-bv)\\
r_s&=  \frac{\tilde{\delta}}{s_\mathrm{c}}.
\end{split}
\end{align}
We view this as a slow-fast system with timescale separation parameter $1/s_\mathrm{c}$. In the region $r\geq \tilde{\delta}$, the variables $u,p,r$ are slow, while $v,q$ are fast. Rescaling $s=s_\mathrm{c} \zeta$, we obtain the corresponding slow system
\begin{align}
\begin{split}\label{eq:slow_sc}
 u_\zeta&=  \tilde{\delta}p\\
  p_\zeta&= -\frac{\tilde{\delta} p}{r}-\tilde{\delta}(a-u-uv^2)\\
\frac{1}{s_\mathrm{c}}v_\zeta&= q\\
\frac{1}{s_\mathrm{c}}q_\zeta&= -\frac{\tilde{\delta} q}{r} +mv-uv^2(1-bv)\\
r_\zeta&=  \tilde{\delta}
\end{split}
\end{align}
Letting $1/s_\mathrm{c} \to 0$, this system admits a three-dimensional critical manifold $\mathcal{M}^\mathrm{c}_0 = \{q=0, mv=uv^2(1-bv)\}$, and the branch $\mathcal{M}^{\mathrm{c},+}_0 = \{q=0, v=v_+(u), u>4mb\}\subset \mathcal{M}^\mathrm{c}_0$ is normally hyperbolic, of saddle type. The reduced flow on $\mathcal{M}^{\mathrm{c},+}_0$ is given by 
\begin{align}
\begin{split}\label{eq:reduced_sc}
 u_\zeta&=  \tilde{\delta}p\\
  p_\zeta&= -\frac{\tilde{\delta} p}{r}-\tilde{\delta}(a-u-uv_+(u)^2)\\
r_\zeta&=  \tilde{\delta},
\end{split}
\end{align}
noting that the vector field is uniformly bounded in the region of interest $r\in[\tilde{\delta},r_0]$. In order to determine the dynamics in this region, we desingularize the system and rescale the independent variable $\mathrm{d}\zeta = \frac{r}{\tilde{\delta}}\mathrm{d}\tilde{\zeta}$, which results in the system
\begin{align}
\begin{split}\label{eq:reduced_sc_desing}
 u_{\tilde{\zeta}}&=  rp\\
  p_{\tilde{\zeta}}&= -p-r(a-u-uv_+(u)^2)\\
r_{\tilde{\zeta}}&= r.
\end{split}
\end{align}
The system~\eqref{eq:reduced_sc_desing} admits an invariant manifold at $r=0$ with dynamics
\begin{align}
\begin{split}\label{eq:reduced_sc_desing0}
 u_{\tilde{\zeta}}&=  0\\
  p_{\tilde{\zeta}}&= -p
\end{split}
\end{align}
and a line of equilibria given by $\ell_0:=\{(u,p)=(u_0,0), u_0>4bm\}$ which are attracting within the manifold $\{r=0\}$, each with a one dimensional stable manifold. In the normal ($r$) direction, this line of equilibria is repelling and admits a unique two-dimensional unstable manifold $\mathcal{W}^\mathrm{u}(\ell_0)$, which satisfies the following proposition, the proof of which follows by standard invariant manifold theory.

\begin{Proposition}\label{prop:well0}
Consider~\eqref{eq:reduced_sc_desing}. The line of equilibria $\ell_0$ admits a unique two-dimensional unstable manifold $\mathcal{W}^\mathrm{u}(\ell_0)$, which for all sufficiently small $r_0>0$ can be represented as a graph
$\mathcal{W}^\mathrm{u}(\ell_0) = \{p = h_0(u,r),0\leq r< r_0 \}$ where 
\begin{align}
h_0(u,r)=\frac{\left(u-a+uv_+(u)^2\right)}{2}r\left(1+\mathcal{O}(r)\right).
\end{align}
\end{Proposition}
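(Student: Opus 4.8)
The plan is to identify $\ell_0$ as a normally hyperbolic curve of equilibria of \eqref{eq:reduced_sc_desing}, apply the standard (un)stable manifold theorem to get existence and local uniqueness of $\mathcal{W}^\mathrm{u}(\ell_0)$, and then extract the leading-order graph by a short Taylor computation.

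\emph{Step 1: linearization and normal hyperbolicity.} Along $\ell_0$, i.e.\ about $(u,p,r)=(u_0,0,0)$ with $u_0>4bm$, the linearization of \eqref{eq:reduced_sc_desing} has matrix
\[
\begin{pmatrix} 0 & 0 & 0 \\ 0 & -1 & -(a-u_0-u_0v_+(u_0)^2) \\ 0 & 0 & 1 \end{pmatrix},
\]
with eigenvalue $0$ (eigenvector $(1,0,0)$ tangent to $\ell_0$), eigenvalue $-1$ (eigenvector $(0,1,0)$), and eigenvalue $+1$ with eigenvector $e_\mathrm{u}(u_0)=\bigl(0,\tfrac12(u_0-a+u_0v_+(u_0)^2),1\bigr)$, obtained by solving $(J-\mathrm{Id})e_\mathrm{u}=0$. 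Since $v_+$ is smooth for $u>4bm$, the right-hand side of \eqref{eq:reduced_sc_desing} is smooth there, and the two transverse eigenvalues $\pm1$ are hyperbolic, uniformly so on compact $u_0$-intervals; hence $\ell_0$ is a normally hyperbolic invariant manifold. Standard invariant manifold theory then provides a smooth, locally unique two-dimensional unstable manifold $\mathcal{W}^\mathrm{u}(\ell_0)$, tangent along $\ell_0$ to $\mathrm{span}\{(1,0,0),e_\mathrm{u}(u)\}$; uniqueness is genuine because $+1$ is the only transverse eigenvalue with positive real part.

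\emph{Step 2: graph representation.} The projection $\pi:(u,p,r)\mapsto(u,r)$ maps $\mathrm{span}\{(1,0,0),e_\mathrm{u}(u)\}$ isomorphically onto the $(u,r)$-plane, so for all sufficiently small $r_0>0$ the manifold $\mathcal{W}^\mathrm{u}(\ell_0)$ is a smooth graph $\{p=h_0(u,r):0\le r<r_0\}$ over a neighborhood of $\{r=0\}$. On the invariant plane $\{r=0\}$ the dynamics reduces to \eqref{eq:reduced_sc_desing0}, i.e.\ $u_{\tilde\zeta}=0$, $p_{\tilde\zeta}=-p$, so backward orbits in $\{r=0\}$ escape every neighborhood of $\ell_0$ unless $p=0$; thus $\mathcal{W}^\mathrm{u}(\ell_0)\cap\{r=0\}=\ell_0$ and $h_0(u,0)=0$, whence $h_0(u,r)=h_1(u)r+\mathcal{O}(r^2)$ for a smooth $h_1$.

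\emph{Step 3: leading coefficient.} Substituting $p=h_0(u,r)$ into \eqref{eq:reduced_sc_desing} yields the invariance identity
\[
-h_0 - r\bigl(a-u-uv_+(u)^2\bigr) = r\,h_0\,\partial_u h_0 + r\,\partial_r h_0 .
\]
Comparing $\mathcal{O}(r)$ terms (the right-hand side equals $h_1 r+\mathcal{O}(r^2)$ since $h_0\,\partial_u h_0=\mathcal{O}(r^2)$) gives $-h_1-(a-u-uv_+(u)^2)=h_1$, so $h_1(u)=\tfrac12\bigl(u-a+uv_+(u)^2\bigr)$ — equivalently one reads this off directly from tangency of the graph to $e_\mathrm{u}(u)$ along $\ell_0$. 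This proves $h_0(u,r)=\tfrac12\bigl(u-a+uv_+(u)^2\bigr)r(1+\mathcal{O}(r))$; the statement for all $u>4bm$ follows by running the argument on compact $u$-intervals and gluing via local uniqueness. There is no serious obstacle here — it is ``standard invariant manifold theory'' as asserted. The only points needing care are the non-compactness of $\ell_0$ (handled by compact-interval exhaustion) and the bookkeeping of which transverse direction is stable versus unstable: here the $p$-direction is stable ($-1$) and the $r$-direction unstable ($+1$), so $\mathcal{W}^\mathrm{u}(\ell_0)$ genuinely leaves $\{r=0\}$, a fact made transparent by the decoupled equation $r(\tilde\zeta)=r(0)\,e^{\tilde\zeta}$, along which $r\to0$ (and then $u\to\mathrm{const}$, $p\to0$) as $\tilde\zeta\to-\infty$.
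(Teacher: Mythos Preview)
Your proof is correct and is precisely the ``standard invariant manifold theory'' argument the paper invokes without details; you have simply filled in the linearization, the normal-hyperbolicity check, and the leading-order Taylor computation that the paper leaves implicit.
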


However, it does not suffice to restrict our attention to small values of $r$, and in fact $r$ may need to be taken large. To understand how solutions originating near $r=0$ behave for large values of $r$, we need further information on the nonlinear vector field~\eqref{eq:reduced_sc_desing}. In general, detailed estimates are not available for $\mathcal{W}^\mathrm{u}(\ell_0)$ when $r$ is not small. However, more can be said near a value of $u$ which satisfies $a-u-uv_+(u)^2=0$.  Note that this is the condition satisfied by the equilibrium $P_{2}$ of the full system~\eqref{eq:fast} provided this equilibrium lies on the branch $v=v_+(u)$. 

In this case, since $a-U_2-U_2v_+(U_2)^2=0$, the line $\ell_2:=\{u=U_2,p=0\}$ is invariant. Examining the linearization of~\eqref{eq:reduced_sc_desing} about the invariant line $\ell_2$ reveals a single zero eigenvalue with eigenvector $(1,0,0)$, and a negative eigenvalue $\lambda=-1$, while the dynamics along $\ell_2$ are simply $r_{\tilde{\zeta}}= r$. Therefore, there exists a two-dimensional, normally attracting manifold $\mathcal{W}^\mathrm{c}(\ell_2)$ which contains the line $\ell_2$, which can be represented as a graph 
\begin{align}
\mathcal{W}^\mathrm{c}(\ell_2) = \left\{p = h_2(u,r), 0\leq r\leq r_0, |u-U_2|\leq \delta_u\right\},
\end{align}
 where $ h_2(u,r)=\mathcal{O}(|u-U_2|)$. Moreover, in the region $0\leq r\ll1$, the manifold $\mathcal{W}^\mathrm{c}(\ell_2)$ coincides with $\mathcal{W}^\mathrm{u}(\ell_0)$, and hence for simplicity we denote the union of these manifolds by $\mathcal{W}^\mathrm{u}(\ell_0)$. We emphasize that this (combined) manifold $\mathcal{W}^\mathrm{u}(\ell_0)$ is locally invariant and normally attracting. See Figure~\ref{fig:core_transition} for a depiction of $\mathcal{W}^\mathrm{u}(\ell_0)$.
 
\begin{figure}
\centering
\includegraphics[width=0.6\linewidth]{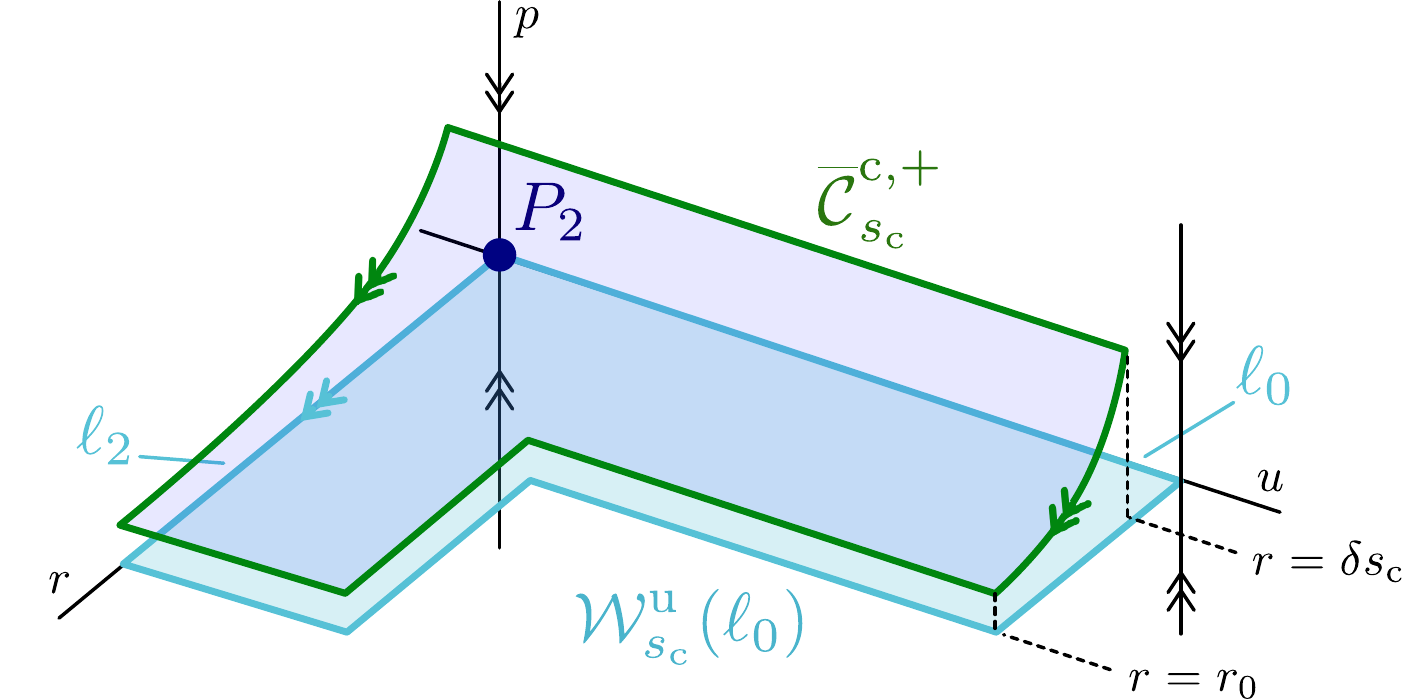}
\caption{Shown is the slow flow on $\mathcal{M}^{\mathrm{c},+}_{s_\mathrm{c}}$. The manifold $\mathcal{W}^\mathrm{cu}_\delta(\mathcal{C})$ aligns along the unstable fibers $\mathcal{W}^\mathrm{u}(\overline{\mathcal{C}}^{\mathrm{c,+}}_{s_\mathrm{c}})$ of the manifold $\overline{\mathcal{C}}^{\mathrm{c,+}}_{s_\mathrm{c}}$ in the full system~\eqref{eq:fast_sc}. Within $\mathcal{M}^{\mathrm{c},+}_{s_\mathrm{c}}$, $\overline{\mathcal{C}}^{\mathrm{c,+}}_{s_\mathrm{c}}$ aligns along $\mathcal{W}^\mathrm{u}_{s_\mathrm{c}}(\ell_0)$ under the forward evolution of~\eqref{eq:fast_sc}.}
\label{fig:core_transition}
\end{figure}

In the full system~\eqref{eq:fast_sc}, the manifold $\mathcal{M}^{\mathrm{c},+}_0$ perturbs to a three-dimensional slow manifold $\mathcal{M}^{\mathrm{c},+}_{s_\mathrm{c}}$ which is $\mathcal{O}(1/s_\mathrm{c})$-close to $\mathcal{M}^{\mathrm{c},+}_0$, with slow flow given by an $\mathcal{O}(1/s_\mathrm{c})$ perturbation of the reduced flow~\eqref{eq:reduced_sc}. In particular, the manifold $\mathcal{W}^\mathrm{u}(\ell_0)$ perturbs within $\mathcal{M}^{\mathrm{c},+}_{s_\mathrm{c}}$ to a locally invariant manifold $\mathcal{W}^\mathrm{u}_{s_\mathrm{c}}(\ell_0)$. Furthermore, the four-dimensional stable/unstable manifolds $\mathcal{W}^\mathrm{s,u}(\mathcal{M}^{\mathrm{c},+}_0)$ formed by the union of the stable/unstable fibers of basepoints on $\mathcal{M}^{\mathrm{c},+}_0$ also perturb to stable/unstable manifolds $\mathcal{W}^\mathrm{s,u}(\mathcal{M}^{\mathrm{c},+}_{s_\mathrm{c}})$. We can identify the subset of these manifolds corresponding to the (un)stable fibers of basepoints on $\mathcal{W}^\mathrm{u}_{s_\mathrm{c}}(\ell_0)$ as three-dimensional locally invariant manifolds $\mathcal{W}^\mathrm{s,u}(\mathcal{W}^\mathrm{u}_{s_\mathrm{c}}(\ell_0))$.

We now track the core center-unstable manifold $\mathcal{W}^\mathrm{cu}_\delta(\mathcal{C})$ through the region $r\in[\delta s_\mathrm{c}, r_0]$. We recall that, given any fixed (large) $s_\mathrm{c}>0$, $\mathcal{W}^\mathrm{cu}_\delta(\mathcal{C})$ admits the expansion~\eqref{eq:wcu_expansion}, which is valid up to $s=s_\mathrm{c}$ for all sufficiently small $\delta>0$. At $s=s_\mathrm{c}$ (corresponding to the subspace $r=\tilde{\delta}$), $\mathcal{W}^\mathrm{cu}_\delta(\mathcal{C})$ is aligned along the unstable fibers within $\mathcal{W}^\mathrm{u}(\mathcal{M}^{\mathrm{c},+}_{s_\mathrm{c}})$ of base point orbits on $\mathcal{M}^{\mathrm{c},+}_{s_\mathrm{c}}$ lying on a curve $\mathcal{C}^{\mathrm{c,+}}_{s_\mathrm{c}}=\left\{(u,p,r): r=\delta s_\mathrm{c}, ~p=\mathcal{O}(\delta, 1/s_\mathrm{c})\right\}$. In particular $\mathcal{W}^\mathrm{cu}_\delta(\mathcal{C})$ transversely intersects the stable fibers of these orbits within $\mathcal{W}^\mathrm{s}(\mathcal{M}^{\mathrm{c},+}_{s_\mathrm{c}})$. Tracking under the forward-flow of~\eqref{eq:fast_sc}, by the exchange lemma, $\mathcal{W}^\mathrm{cu}_\delta(\mathcal{C})$ aligns $\mathcal{O}(e^{-s_\mathrm{c}})$-close to the unstable fibers within $\mathcal{W}^\mathrm{u}(\mathcal{M}^{\mathrm{c},+}_{s_\mathrm{c}})$ of the forward evolution $\overline{\mathcal{C}}^{\mathrm{c,+}}_{s_\mathrm{c}}$ of the manifold $\mathcal{C}^{\mathrm{c,+}}_{s_\mathrm{c}}$ within $\mathcal{M}^{\mathrm{c},+}_{s_\mathrm{c}}$; see Figure~\ref{fig:core_transition}.

As the flow on $\mathcal{M}^{\mathrm{c},+}_{s_\mathrm{c}}$ is an $\mathcal{O}(1/s_\mathrm{c})$ perturbation of the reduced flow~\eqref{eq:reduced_sc}, we are thus able to determine how $\mathcal{W}^\mathrm{cu}_\delta(\mathcal{C})$ emerges at $r=r_0$, noting that for larger values of $r$, we only have detailed estimates on $\mathcal{W}^\mathrm{cu}_\delta(\mathcal{C})$ near the solution $u=U_2$ of $a-u-uv_+(u)^2=0$.

\subsection{Dynamics on $\mathcal{M}^+_\delta$}\label{sec:m_delta}
In the region $r\geq r_0$, where $r_0>0$ is taken sufficiently small and fixed independently of $\delta$, we return to the fast system~\eqref{eq:fast} and append an equation for $r$
\begin{align}
\begin{split}\label{eq:fast_r}
 u_s&=  \delta p\\
  p_s&= -\frac{\delta p}{r}-\delta(a-u-uv^2)\\
v_s&= q\\
q_s&= -\frac{\delta q}{r} +mv-uv^2(1-bv)\\
r_s&= \delta.
\end{split}
\end{align}
In this region, when $\delta=0$ this system admits a critical manifold $\mathcal{M}_0$ defined by~\eqref{eq:M0}, which can be decomposed into the branches $\mathcal{M}_0 = \mathcal{M}^0_0\cup \mathcal{M}^-_0 \cup \mathcal{F}\cup \mathcal{M}^+_0$ as in~\eqref{eq:M0branches}. For all sufficiently small $\delta>0$, (any normally hyperbolic portion of) $\mathcal{M}^+_0$ perturbs to a three-dimensional invariant manifold $\mathcal{M}^+_\delta$ and its four-dimensional (un)stable manifolds perturb to four dimensional locally invariant manifolds $\mathcal{W}^\mathrm{s,u}(\mathcal{M}^+_\delta)$. As a result of the analysis of the previous section, we know that $\mathcal{W}^\mathrm{cu}_\delta(\mathcal{C})$ approaches the set $r=r_0$ aligned along the strong unstable fibers of orbits on $\mathcal{M}^+_\delta$ which are $\mathcal{O}(1/s_\mathrm{c}+\delta)$-close to the intersection $\mathcal{W}^\mathrm{u}(\ell_0)\cap\{r=r_0\}$. By Proposition~\ref{prop:well0}, this set is given by the graph
\begin{align}\label{eq:gamma_in}
\Gamma_\mathrm{in}:=\left\{p =p_\mathrm{in}(u):= h_0(u,r_0)=\frac{\left(u-a+uv_+(u)^2\right)}{2}r_0\left(1+\mathcal{O}(r_0)\right)\right\},
\end{align}
and the projection of $\mathcal{W}^\mathrm{cu}_\delta(\mathcal{C})\cap\{r=r_0\}$ onto $\mathcal{M}^+_\delta$ along the unstable fibers within $\mathcal{W}^\mathrm{u}(\mathcal{M}^+_\delta)$ is therefore within $\mathcal{O}(1/s_\mathrm{c}+\delta)$ of this graph. 

Likewise, we consider the far-field stable manifold $\mathcal{W}^{\mathrm{s}}(\mathcal{B}_\delta^\mathrm{far})$, which we recall from~\S\ref{sec:farfieldmanifold} transversely intersects $\mathcal{W}^{\mathrm{u}}(\mathcal{M}^+_\delta)$ in a two-dimensional manifold $\mathcal{H}^\mathrm{far}_\delta$ which lies within $\mathcal{O}(\delta)$ of the set $\mathcal{H}^\mathrm{far}_0$ given by 
\begin{align}
\mathcal{H}^\mathrm{far}_0 = \mathcal{H}_{\mathrm{vd}}\cap\left\{ p = \frac{a-u_\mathrm{f}}{K_0(1/k)}K_1(1/k), k\in[0,\bar{k}]\right\},
\end{align}
where $\mathcal{H}_\mathrm{vd}$ is as in Lemma~\ref{lem:front_tr} and we recall $k=1/r$. In other words, $\mathcal{W}^{\mathrm{s}}(\mathcal{B}_\delta^\mathrm{far})$ intersects $\mathcal{W}^{\mathrm{u}}(\mathcal{M}^+_\delta)$ transversely along the unstable fibers of orbits lying within $\mathcal{O}(\delta)$ of the set
\begin{align}\label{eq:gamma_out}
\Gamma_\mathrm{out}:=\mathcal{B}_0^\mathrm{far}\cap\{u=u_\mathrm{f}\} &= \left\{u=u_\mathrm{f}, p = p_\mathrm{out}(r):= \frac{a-u_\mathrm{f}}{K_0(r)}K_1(r),  r\in [\bar{r},\infty) \right\},
\end{align}
where $\bar{r}>0$ is arbitrary. 

We aim to show the existence of $r=r_I$ such that the manifolds $\mathcal{W}^{\mathrm{s}}(\mathcal{B}_\delta^\mathrm{far})$ and $\mathcal{W}^\mathrm{cu}_\delta(\mathcal{C})$ intersect transversely at $r=r_I$ near the fast jump in the set $\{u=u_\mathrm{f}\}$. To do this, we will track orbits on $\mathcal{W}^\mathrm{cu}_\delta(\mathcal{C})$ as they evolve according to the dynamics of~\eqref{eq:fast_r} until reaching the set $\{u=u_\mathrm{f}\}$.

In order to track $\mathcal{W}^\mathrm{cu}_\delta(\mathcal{C})$ through this region, we examine the reduced flow on $\mathcal{M}^+_0$, given by
\begin{align}
\begin{split}\label{eq:slow_r_red}
 u_\xi&=   p\\
  p_\xi&= -\frac{p}{r}-(a-u-uv_+(u)^2)\\
r_\xi&= 1,
\end{split}
\end{align}
where we've introduced the variable $\mathrm{d} \xi=\mathrm{d}r$. At $r=r_0$ orbits of $\mathcal{W}^\mathrm{cu}_\delta(\mathcal{C})$ are aligned along the unstable fibers of orbits crossing $\Gamma_\mathrm{in}$. At $u=u_\mathrm{f}$, $\mathcal{W}^{\mathrm{s}}(\mathcal{B}_\delta^\mathrm{far})$ are aligned along orbits crossing $\Gamma_\mathrm{out}$. Hence we aim to show in the reduced flow~\eqref{eq:slow_r_red} that the forward evolution of trajectories in $\Gamma_\mathrm{in}$ transversely intersects the set $\Gamma_\mathrm{out}$ within the set $\{u=u_\mathrm{f}\}$. This transverse intersection will then persist under perturbation, thereby obtaining the transverse intersection of $\mathcal{W}^{\mathrm{s}}(\mathcal{B}_\delta^\mathrm{far})$ and $\mathcal{W}^\mathrm{cu}_\delta(\mathcal{C})$ for sufficiently small $\delta>0$.

We have the following proposition, which is the main result of this section.
\begin{Proposition}\label{prop:reduced_tr}
Consider the set $\Gamma_\mathrm{in}$ of initial conditions at $r=r_0$ for the system~\eqref{eq:slow_r_red}. The forward evolution of $\Gamma_\mathrm{in}$ under the flow of~\eqref{eq:slow_r_red} traces out a two-dimensional manifold $\overline{\Gamma}_\mathrm{in}$ which intersects the set $\{u=u_\mathrm{f}\}$ in a curve $\Gamma_\mathrm{f}$. If the parameters $a,b,m$, satisfy
\begin{align}
\int_{U_2}^{u_\mathrm{f}} \frac{u - 2mb + \sqrt{u^2 - 4umb}}{2b^2} \mathrm{d}u> \frac{1}{2}(a-U_2)^2,
\end{align}
then, within $\{u=u_\mathrm{f}\}$, there exists $r=r_I>0$ such that the curve $\Gamma_\mathrm{f}$ transversely intersects $\Gamma_\mathrm{out}$ at $r=r_I$.
\end{Proposition}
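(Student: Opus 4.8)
The plan is to reduce the statement to a one-parameter shooting argument for the planar, non-autonomous reduced flow~\eqref{eq:slow_r_red}, organised around the ``radial energy'' $\mathcal{E}(u,p;r):=\tfrac12 p^2-\Phi(u)$ with $\Phi(u):=\int_{U_2}^{u}\big(\tilde u-a+\tilde u\,v_+(\tilde u)^2\big)\,\mathrm{d}\tilde u$. Writing $G(u):=\Phi'(u)=u-a+uv_+(u)^2$, I would first record the elementary facts that $G$ is strictly increasing on $(4bm,\infty)$ with $G(U_2)=0$ (since $P_2$ is an equilibrium of~\eqref{eq:fast}), hence $G>0$ and $\Phi$ is strictly increasing on $(U_2,\infty)$ with $\Phi(U_2)=0$; that along orbits of~\eqref{eq:slow_r_red} one has $\tfrac{\mathrm{d}}{\mathrm{d}\xi}\mathcal{E}=-p^2/r\le 0$; that $a>u_\mathrm{f}\ (=9bm/2)$, so $\Gamma_\mathrm{out}$ is the graph $p=p_\mathrm{out}(r)=(a-u_\mathrm{f})K_1(r)/K_0(r)$, a strictly decreasing function with $p_\mathrm{out}(r)>a-u_\mathrm{f}>0$ and $p_\mathrm{out}(r)\to a-u_\mathrm{f}$ as $r\to\infty$; and that $\Gamma_\mathrm{in}$ is the graph $p=p_\mathrm{in}(u)=h_0(u,r_0)=\tfrac12 G(u)r_0(1+\mathcal{O}(r_0))$ at $r=r_0$ (Proposition~\ref{prop:well0}).

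Next I would describe $\overline{\Gamma}_\mathrm{in}$ and $\Gamma_\mathrm{f}$. For $u_0\in(U_2,u_\mathrm{f}]$ the point $(u_0,p_\mathrm{in}(u_0))$ satisfies $0<p_\mathrm{in}(u_0)<r_0G(u_0)$ for $r_0$ small; the region $\{u>U_2,\ 0<p<rG(u)\}$ is forward invariant under~\eqref{eq:slow_r_red}, with $p_\xi>0$ there, so $u$ and $p$ increase strictly along the orbit, which therefore reaches $u=u_\mathrm{f}$ at a first radius $r_\mathrm{cross}(u_0)\in[r_0,\infty)$ with $p_\mathrm{cross}(u_0):=p(r_\mathrm{cross}(u_0))>0$. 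By smooth dependence on $u_0$ this traces out the curve $\Gamma_\mathrm{f}=\{(u_\mathrm{f},p_\mathrm{cross}(u_0),r_\mathrm{cross}(u_0)):u_0\in(U_2,u_\mathrm{f}]\}=\overline{\Gamma}_\mathrm{in}\cap\{u=u_\mathrm{f}\}$, the flow crossing $\{u=u_\mathrm{f}\}$ transversally since $u_\xi=p>0$. The line $\ell_2=\{u=U_2,p=0\}$ is invariant and a saddle in each fibre, so by continuous dependence $r_\mathrm{cross}(u_0)\to\infty$ as $u_0\downarrow U_2$, whereas $r_\mathrm{cross}(u_\mathrm{f})=r_0$.

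I would then compare the two ends of $\Gamma_\mathrm{f}$ against $\Gamma_\mathrm{out}$. At $u_0=u_\mathrm{f}$: $p_\mathrm{cross}(u_\mathrm{f})=h_0(u_\mathrm{f},r_0)\to0$ as $r_0\to0$, so $p_\mathrm{cross}(u_\mathrm{f})<p_\mathrm{out}(r_0)$ for $r_0$ small. At $u_0\downarrow U_2$: from $\tfrac12 p_\mathrm{cross}(u_0)^2=\Phi(u_\mathrm{f})+\mathcal{E}_0(u_0)-\int_{r_0}^{r_\mathrm{cross}(u_0)}p^2/r\,\mathrm{d}\xi$ with $\mathcal{E}_0(u_0)=\tfrac12 p_\mathrm{in}(u_0)^2-\Phi(u_0)\to0$, one shows the dissipation integral tends to $0$ (the orbit spends almost all of its radial length at large $r$, where $1/r$ is small while $p$ remains uniformly bounded; the passage near $\ell_2$ is controlled via $\mathcal{E}$ and the saddle structure), so $\tfrac12 p_\mathrm{cross}(u_0)^2\to\Phi(u_\mathrm{f})$, while $r_\mathrm{cross}(u_0)\to\infty$ gives $\tfrac12 p_\mathrm{out}(r_\mathrm{cross}(u_0))^2\to\tfrac12(a-u_\mathrm{f})^2$. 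Using $uv_+(u)^2=(u-2mb+\sqrt{u^2-4umb})/(2b^2)$ and $\int_{U_2}^{u_\mathrm{f}}(\tilde u-a)\,\mathrm{d}\tilde u=\tfrac12(u_\mathrm{f}-a)^2-\tfrac12(U_2-a)^2$, this yields
\[
\lim_{u_0\downarrow U_2}\Big(\tfrac12 p_\mathrm{cross}(u_0)^2-\tfrac12 p_\mathrm{out}(r_\mathrm{cross}(u_0))^2\Big)=\Phi(u_\mathrm{f})-\tfrac12(a-u_\mathrm{f})^2=\int_{U_2}^{u_\mathrm{f}}uv_+(u)^2\,\mathrm{d}u-\tfrac12(a-U_2)^2,
\]
which is positive precisely under the hypothesis. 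Hence $D(u_0):=p_\mathrm{cross}(u_0)-p_\mathrm{out}(r_\mathrm{cross}(u_0))$ is continuous on $(U_2,u_\mathrm{f}]$ with $D(u_\mathrm{f})<0$ and $\lim_{u_0\downarrow U_2}D(u_0)>0$ (both momenta being positive), so by the intermediate value theorem $D(u_0^\ast)=0$ for some $u_0^\ast\in(U_2,u_\mathrm{f})$; setting $r_I:=r_\mathrm{cross}(u_0^\ast)>0$ produces an intersection point $(u_\mathrm{f},p_\mathrm{out}(r_I),r_I)$ of $\Gamma_\mathrm{f}$ and $\Gamma_\mathrm{out}$.

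Finally I would upgrade this to a transverse intersection, which I expect to be the hard part. Transversality of $\Gamma_\mathrm{f}$ and $\Gamma_\mathrm{out}$ in $\{u=u_\mathrm{f}\}$ at $r_I$ is equivalent to $D'(u_0^\ast)\ne0$, i.e.\ to the tangent of $\Gamma_\mathrm{f}$ there not being parallel to $(p_\mathrm{out}'(r_I),1)$. The clean route is to prove that $r_\mathrm{cross}(\cdot)$ is strictly decreasing on $(U_2,u_\mathrm{f}]$, so that $\Gamma_\mathrm{f}$ is a graph $p=p_\mathrm{f}(r)$ over $[r_0,\infty)$, and that $p_\mathrm{f}$ is strictly increasing; since $p_\mathrm{out}$ is strictly decreasing, $p_\mathrm{f}-p_\mathrm{out}$ is then strictly increasing and its zero is simple. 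Monotonicity of $p_\mathrm{f}$ should follow from $\tfrac12 p_\mathrm{cross}(u_0)^2=\int_{u_0}^{u_\mathrm{f}}G(\tilde u)\,\mathrm{d}\tilde u+\tfrac12 p_\mathrm{in}(u_0)^2-\int_{r_0}^{r_\mathrm{cross}(u_0)}p^2/r\,\mathrm{d}\xi$, whose dominant term $\int_{u_0}^{u_\mathrm{f}}G$ is strictly decreasing in $u_0$, together with a comparison estimate (using $r_0$ small) showing the other two terms cannot reverse this; alternatively one computes $D'(u_0^\ast)$ directly from the variational equation of~\eqref{eq:slow_r_red} along the selected orbit. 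The main obstacle is therefore controlling the nonlinear, non-hyperbolic, non-autonomous reduced flow uniformly in the shooting parameter $u_0$ --- in particular the passage near $\ell_2$ and the small-$r$ regime, where no closed-form solution is available --- both for the dissipation limit above and for this final monotonicity; the remainder of the argument is soft (continuous dependence, the intermediate value theorem) or an elementary computation.
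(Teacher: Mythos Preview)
Your overall strategy matches the paper's: parametrise $\Gamma_\mathrm{f}$ by the initial value $u_0\in(U_2,u_\mathrm{f}]$, identify the two endpoint behaviours (at $u_0=u_\mathrm{f}$ the hitting point lies below $\Gamma_\mathrm{out}$, at $u_0\downarrow U_2$ it lies above under the stated integral condition), and conclude a transverse crossing from opposite monotonicities of $p_\mathrm{f}(r)$ and $p_\mathrm{out}(r)$.

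Where you differ is in the two technical steps you flagged as hard. For the limit $u_0\downarrow U_2$, the paper bypasses your dissipation-integral estimate entirely: it compactifies by setting $k=1/r$, obtaining an autonomous system in which $(U_2,0,0)$ is a saddle with explicit (un)stable manifolds on $\{k=0\}$ given by the level set $E(u,p)=0$ of the conserved quantity $E$; since $\overline{\Gamma}_\mathrm{in}$ contains the invariant line $\ell_2$ and is tangent to $\mathcal{W}^{\mathrm{cu},\infty}(U_2,0)$ at large $r$, standard invariant-manifold theory forces $\overline{\Gamma}_\mathrm{in}$ to align along $\mathcal{W}^{\mathrm{u},\infty}(U_2,0)$ as $k\to0$, giving $p_\mathrm{cross}(u_0)\to p_{\mathrm{f},\infty}$ directly. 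This is cleaner than controlling $\int p^2/r\,\mathrm{d}r$ uniformly. For the monotonicity that yields transversality, the paper does not use your energy identity or the variational equation; instead it runs a direct comparison argument on pairs of trajectories: if $u_{0,1}<u_{0,2}$ then $p(\cdot;u_{0,1})<p(\cdot;u_{0,2})$ for all $r$ (a first-crossing contradiction using $f'>0$), whence both $r_\mathrm{cross}$ and $p_\mathrm{cross}$ are strictly monotone in $u_0$. This elementary phase-plane argument is what makes the ``hard part'' routine, and is worth adopting over your proposed energy/variational route.
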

We begin with the following lemma which describes the set $\Gamma_\mathrm{out}$, given by the graph of the function $p_\mathrm{out}(r)$ in~\eqref{eq:gamma_out}.
%Recall that on the center-stable manifold $\mathcal{B}_0^\mathrm{far}$, we have $p(k) = \frac{a-u_\mathrm{f}}{K_0(r_I)}K_1(1/k)$ \eqref{eq:bfar}. Equivalently, $p(r) = \frac{a-u_\mathrm{f}}{K_0(r_I)}K_1(r)$, so at the jump value $r_I$, we have $p(r_I) = \frac{a-u_\mathrm{f}}{K_0(r_I)}K_1(r_I)$. We will later match $p(r_I)$ with $p=p_\mathrm{f}(u_\mathrm{in}; r_0)$.
\begin{Lemma} \label{lem:out_monotonic} Regarding the function $p_\mathrm{out}(r)$, the following hold.
\begin{enumerate}[(i)]
\item \label{lem:out_monotonici} $p_\mathrm{out}'(r)<0$ for $r\in(0,\infty)$.
\item \label{lem:out_monotonicii} $\lim_{r\to0}p_\mathrm{out}(r) = \infty$
\item \label{lem:out_monotoniciii} $\lim_{r\to\infty}p_\mathrm{out}(r) = a-u_\mathrm{f}$
\end{enumerate}
\end{Lemma}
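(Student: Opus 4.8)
The plan is to reduce all three claims to the behaviour of the ratio $g(r):=K_1(r)/K_0(r)$. Since $K_0(r)>0$ for $r>0$ and $K_1=-K_0'$, one may write $p_\mathrm{out}(r)=(a-u_\mathrm{f})\,g(r)$, and the construction assumes $a>u_\mathrm{f}=\tfrac{9bm}{2}$, so the prefactor $a-u_\mathrm{f}$ is a fixed positive constant. Hence (i) is equivalent to $g'(r)<0$ on $(0,\infty)$, (ii) to $g(r)\to+\infty$ as $r\to0^+$, and (iii) to $g(r)\to 1$ as $r\to\infty$.

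For (i) I would use the integral representation $K_\nu(r)=\int_0^\infty e^{-r\cosh t}\cosh(\nu t)\,\mathrm{d}t$, valid for $r>0$; in particular $K_0(r)=\int_0^\infty e^{-r\cosh t}\,\mathrm{d}t$ and $K_1(r)=\int_0^\infty \cosh t\,e^{-r\cosh t}\,\mathrm{d}t$, consistent with $K_1=-K_0'$. Thus $g(r)$ is precisely the mean of $t\mapsto\cosh t$ with respect to the probability measure $\mathrm{d}\mu_r(t)\propto e^{-r\cosh t}\,\mathrm{d}t$ on $(0,\infty)$. Setting $A(r)=\int_0^\infty e^{-r\cosh t}\,\mathrm{d}t$, $B(r)=\int_0^\infty \cosh t\,e^{-r\cosh t}\,\mathrm{d}t$, $C(r)=\int_0^\infty \cosh^2 t\,e^{-r\cosh t}\,\mathrm{d}t$, one has $A'=-B$ and $B'=-C$, so
\[
g'(r)=\frac{d}{dr}\,\frac{B}{A}=\frac{B^2-AC}{A^2}=-\operatorname{Var}_{\mu_r}(\cosh t)<0,
\]
the variance being strictly positive because $\cosh t$ is non-constant on $(0,\infty)$. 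Equivalently, $g$ solves the Riccati equation $g'=g^2-g/r-1$, and one could instead verify the bound $0<g(r)<\tfrac{1+\sqrt{1+4r^2}}{2r}$ (the positive root of $x^2-x/r-1$); the variance identity is the shorter route.

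For (ii) and (iii) I would invoke the standard asymptotics of the modified Bessel functions. As $r\to0^+$, $K_0(r)=-\log r+\mathcal{O}(1)\to+\infty$ while $K_1(r)=r^{-1}+\mathcal{O}(r\log r)\to+\infty$, so $g(r)\sim(-r\log r)^{-1}\to+\infty$, which gives (ii). As $r\to\infty$, $K_\nu(r)=\sqrt{\pi/(2r)}\,e^{-r}\bigl(1+\mathcal{O}(1/r)\bigr)$ for each fixed $\nu$, so $g(r)\to 1$ and $p_\mathrm{out}(r)\to a-u_\mathrm{f}$, which gives (iii); alternatively, (iii) follows from the probabilistic picture above, since $\mu_r$ concentrates at $t=0$ as $r\to\infty$.

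This statement is a bundle of elementary facts about $K_0$ and $K_1$, so there is no real obstacle. The only point requiring an actual idea rather than a table lookup is the global monotonicity in (i), and the variance identity disposes of it in a single line; everything else is direct asymptotic bookkeeping.
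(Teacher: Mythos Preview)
Your proof is correct. Parts~(ii) and~(iii) are handled the same way as in the paper, by citing the standard small- and large-argument asymptotics of $K_0$ and $K_1$.

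For part~(i), your route is genuinely different from the paper's and, in fact, cleaner. The paper computes $p_\mathrm{out}'$ explicitly as $(a-u_\mathrm{f})K_0(r)^{-2}\bigl(K_1(r)^2-\tfrac{1}{2}K_2(r)K_0(r)-\tfrac{1}{2}K_0(r)^2\bigr)$ and then invokes the Nicholson-type product formula $K_\mu(r)K_\nu(r)=2\int_0^\infty K_{\mu+\nu}(2r\cosh t)\cosh((\mu-\nu)t)\,\mathrm{d}t$ to rewrite each of the three terms as an integral, combining them into $\int_0^\infty[\cosh(2t)-1][K_0(2r\cosh t)-K_2(2r\cosh t)]\,\mathrm{d}t$, which is negative because $K_0<K_2$ pointwise. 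Your argument instead uses only the elementary representation $K_\nu(r)=\int_0^\infty e^{-r\cosh t}\cosh(\nu t)\,\mathrm{d}t$ and the observation that $g(r)=K_1/K_0$ is the mean of $\cosh t$ under $\mathrm{d}\mu_r\propto e^{-r\cosh t}\,\mathrm{d}t$; differentiating under the integral gives $g'=-\mathrm{Var}_{\mu_r}(\cosh t)<0$ immediately. The two computations are algebraically equivalent (your $C$ equals $\tfrac{1}{2}(K_0+K_2)$, so $B^2-AC$ reduces to the paper's bracket), but your variance interpretation replaces the product formula and the auxiliary inequality $K_0<K_2$ by a single application of Cauchy--Schwarz. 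The paper's approach has the minor advantage of staying within identities tabulated in standard references, while yours is self-contained and shorter.
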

\begin{proof}
For~\ref{lem:out_monotonici}, we recall that $p_\mathrm{out}(r) = \frac{a-u_\mathrm{f}}{K_0(r)}K_1(r)$. Note that $a - u_\mathrm{f} > 0$.
We have that 
\begin{align*}
    p'(r) &= (a - u_\mathrm{f})\left(\frac{K_1(r)^2}{K_0(r)^2} - \frac{K_2(r)}{2K_0(r)} - \frac{1}{2}\right) \\
    &= \frac{a - u_\mathrm{f}}{K_0(r)^2}\left(K_1(r)^2 - \frac{1}{2}K_2(r)K_0(r) - \frac{1}{2}K_0(r)^2\right).
\end{align*}

Using the integral form for products of Bessel functions~\cite[\S10.32.17]{handbook}, we see that 
\begin{align*}
    \frac{1}{2}K_1(r)^2 &= \int_{0}^{\infty} K_2(2r \cosh(t)) dt= \int_{0}^{\infty} K_0(2r \cosh(t)) \cosh(2t) dt \text{, }\\
    K_2(r)K_0(r) &= 2\int_{0}^{\infty} K_2(2r \cosh(t)) \cosh(2t) dt \\
    K_0(r)^2 &= 2\int_{0}^{\infty} K_0(2r \cosh(t))  dt.
\end{align*}
Note that
\begin{align*}
K_1(r)^2 = \int_{0}^{\infty} K_0(2r \cosh(t)) \cosh(2t) dt + \int_{0}^{\infty} K_2(2r \cosh(t)) dt,
\end{align*}
so that
\begin{align*}
    K_1(r)^2 - \frac{1}{2}K_2(r)K_0(r) - \frac{1}{2}K_0(r)^2
    &= \int_{0}^{\infty} K_0(2r \cosh(t)) \cosh(2t) dt + \int_{0}^{\infty} K_2(2r \cosh(t)) dt \\
    &-  \int_{0}^{\infty} K_2(2r \cosh(t)) \cosh(2t) dt - \int_{0}^{\infty} K_0(2r \cosh(t))  dt\\
    &= \int_{0}^{\infty} \left[ \cosh(2t) - 1 \right] \left[K_0(2r \cosh(t))- K_2(2r \cosh(t)) \right] dt\\
    & < 0,
\end{align*}
since $\cosh(2t) - 1 \geq 0$ and $K_0(x)- K_2(x) < 0$ for all $x \geq 0$, which completes the proof of~\ref{lem:out_monotonici}.

The limits~\ref{lem:out_monotonicii} and~\ref{lem:out_monotoniciii} follow directly from asymptotic properties of the functions $K_0,K_1$~\cite[\S10.30]{handbook}.
\end{proof}

We now describe the evolution of the set of initial conditions $\Gamma_\mathrm{in}$ to the set $u=u_\mathrm{f}$. At $r=r_0$, we represent the initial conditions $\Gamma_\mathrm{in}$ via~\eqref{eq:gamma_in} as the graph $p=p_\mathrm{in}(u):=h_0(u,r_0)$. For $a/m>\max\left\{9b/2, 4b+1/b\right\}$, the function $f(u):=-(a-u-uv_+(u)^2)$ admits a unique zero $u=U_2\in(4bm,u_\mathrm{f})$, coinciding with the uniformly vegetated equilibrium state $P_2$. We have the following lemma.
\begin{Lemma} \label{lem:rf_well_defined} 
Fix $u_\mathrm{in}\in(U_2,u_\mathrm{f})$. The forward evolution of the initial condition in $\Gamma_\mathrm{in}$ given by $p=p_\mathrm{in}(u_\mathrm{in})$ at $r=r_0$ eventually reaches the set $\{u=u_\mathrm{f}\}$ at some value of $(p,r)=(p_\mathrm{f},r_\mathrm{f})(u_\mathrm{in})$.
\end{Lemma}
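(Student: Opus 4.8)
The plan is to treat~\eqref{eq:slow_r_red} as a non-autonomous planar flow in $(u,p)$ with $r=r_0+\xi$ as the evolution variable, started from the single point $(u,p)=(u_\mathrm{in},p_\mathrm{in}(u_\mathrm{in}))$ at $\xi=0$ (i.e.\ $r=r_0$). First I would record two sign facts. On $(4bm,\infty)$ the function $v_+(u)$ is strictly increasing, so $u\mapsto u+uv_+(u)^2$ is strictly increasing, hence $g(u):=a-u-uv_+(u)^2$ is strictly decreasing; since its unique zero is $U_2$, we get $f(u):=-g(u)>0$ for all $u\in(U_2,u_\mathrm{f})$, with the uniform lower bound $f(u)\ge f(u_\mathrm{in})=:f_0>0$ on $[u_\mathrm{in},u_\mathrm{f}]$ (using the monotonicity of $f$). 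Second, by~\eqref{eq:gamma_in} and Proposition~\ref{prop:well0}, $p_\mathrm{in}(u_\mathrm{in})=\tfrac12 f(u_\mathrm{in})r_0(1+\mathcal{O}(r_0))>0$ once $r_0$ is taken small enough.

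The main device is the change of variables $w:=pr$, which converts~\eqref{eq:slow_r_red} into the benign equation $w_\xi=f(u)\,r$, replacing the singular damping term $-p/r$ in the $p$-equation by a genuine source term. As long as $u(\xi)\in(U_2,u_\mathrm{f})$ we have $w_\xi=f(u)r>0$, so $w$ is increasing and $w(\xi)\ge w(0)=p_\mathrm{in}(u_\mathrm{in})r_0>0$; hence $p=w/r>0$, so $u_\xi=p>0$ and $u$ is strictly increasing. Consequently $u$ never drops below $u_\mathrm{in}$ and can only leave the interval $(U_2,u_\mathrm{f})$ by crossing $u_\mathrm{f}$ from below. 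Integrating the sharper bound $w_\xi\ge f_0(r_0+\xi)$ while $u<u_\mathrm{f}$ gives $w(\xi)\ge w(0)+f_0(r_0\xi+\tfrac12\xi^2)$, hence $p(\xi)=w(\xi)/(r_0+\xi)\ge\tfrac12 f_0\xi$, and therefore $u(\xi)\ge u_\mathrm{in}+\tfrac14 f_0\xi^2$ as long as the trajectory has not yet reached $u_\mathrm{f}$.

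Since the right-hand side of this last inequality is unbounded while $u$ is confined to $[u_\mathrm{in},u_\mathrm{f}]$ before the first hitting time, $u$ must reach $u_\mathrm{f}$ at some finite $\xi=\xi_\mathrm{f}(u_\mathrm{in})$, with the crude bound $\xi_\mathrm{f}\le 2\sqrt{(u_\mathrm{f}-u_\mathrm{in})/f_0}$. No finite-time blow-up can intervene beforehand: on any bounded $\xi$-interval $u$ lies in $[u_\mathrm{in},u_\mathrm{f}]$, $r=r_0+\xi$ is bounded, and $w$ — hence $p=w/r$ — is bounded because $w_\xi=f(u)r$ is bounded there. Setting $(p_\mathrm{f},r_\mathrm{f})(u_\mathrm{in}):=(p(\xi_\mathrm{f}),\,r_0+\xi_\mathrm{f})$ then yields the claim.

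The one point requiring care — and the reason a direct estimate on $p$ alone does not close the argument — is excluding the scenario in which the $-p/r$ damping drives $p$ to zero fast enough that $u$ stalls strictly below $u_\mathrm{f}$; the substitution $w=pr$ is exactly what defeats this, after which the proof reduces to elementary monotonicity plus a quadratic lower bound. One should also note (though it is immediate) that the trajectory stays within the domain of validity of the reduced flow on $\mathcal{M}^+_0$, since $u\in[u_\mathrm{in},u_\mathrm{f}]\subset(4bm,\infty)$ throughout, where $v_+$ is smooth and $\mathcal{M}^+_0$ is normally hyperbolic.
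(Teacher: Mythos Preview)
Your proof is correct, but the route differs from the paper's. The paper argues directly that $p_\xi\ge 0$ along the trajectory: initially $p_\xi=-p/r+f(u)>0$ by~\eqref{eq:gamma_in}, and at any first zero of $p_\xi$ one computes $p_{\xi\xi}=p\bigl(r^{-2}+f'(u)\bigr)>0$ (using $p>0$ up to that point and $f'>0$), so $p_\xi$ never becomes negative. Hence $p\ge p_\mathrm{in}(u_\mathrm{in})>0$ and $u$ grows at least linearly to $u_\mathrm{f}$. Your substitution $w=pr$ instead absorbs the damping term outright, turning the $p$-equation into the monotone relation $w_\xi=f(u)r>0$; this avoids the second-derivative sign analysis entirely and yields the sharper conclusion that $u$ grows at least quadratically, with an explicit bound $\xi_\mathrm{f}\le 2\sqrt{(u_\mathrm{f}-u_\mathrm{in})/f(u_\mathrm{in})}$ on the hitting time. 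The paper's argument gives a bit more for free---namely that $p$ itself is nondecreasing, not merely positive---which is in the spirit of what is needed next in Lemma~\ref{lem:p_monotonic_u}, though that lemma is proved independently anyway. Your approach, on the other hand, is more self-contained (no appeal to the sign of $f'$) and gives quantitative control that the paper's does not.
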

\begin{proof}
We show that for any initial condition $u_\mathrm{in}\in(U_2,u_\mathrm{f})$, under the flow of~\eqref{eq:slow_r_red}, that $u_\xi$ is nondecreasing and therefore eventually the $u$ coordinate will reach $u=u_\mathrm{f}$ at some $r=r_\mathrm{f}(u_\mathrm{in})$. Since $u_\xi = p$, we achieve this by showing that $p_\xi\geq 0$ along such a trajectory, which ensures that $u_\xi > p_\mathrm{in}(u_\mathrm{in})>0$. We note that $p_\xi > 0$ initially at $r=r_0$ (via~\eqref{eq:gamma_in} and~\eqref{eq:slow_r_red}), and at any location where $p_\xi=0$, we have that $p_{\xi\xi}= \frac{p}{r^2} + f'(u)p > 0$. Thus, $u_\xi =p$ is nondecreasing, which ensures that $u$ will increase towards $u_\mathrm{in}$.
\end{proof}
Taken over all values of $u_\mathrm{in}\in(U_2,u_\mathrm{f})$, we obtain a curve $(p,r)=(p_\mathrm{f},r_\mathrm{f})(u_\mathrm{in})$ parameterized by the initial $u$-coordinate $u_\mathrm{in}\in(U_2,u_\mathrm{f})$. In order to prove Proposition~\ref{prop:reduced_tr}, we show that the curve $(p,r)=(p_\mathrm{f},r_\mathrm{f})(u_\mathrm{in})$ transversely intersects the curve $p = p_\mathrm{out}(r)$ (that is, the set $\Gamma_\mathrm{out}$) within the plane $u=u_\mathrm{f}$, for some value of $u_\mathrm{in}$ and corresponding $r=r_\mathrm{f}(u_\mathrm{in})=:r_I$.

\begin{Lemma} \label{lem:p_monotonic_u} The curve $(p,r)=(p_\mathrm{f},r_\mathrm{f})(u_\mathrm{in})$ satisfies $r_\mathrm{f}'(u_\mathrm{in})<0$ and $p_\mathrm{f}'(u_\mathrm{in})<0$ for $u_\mathrm{in}\in(U_2,u_\mathrm{f})$.
\end{Lemma}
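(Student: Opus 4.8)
The plan is to prove the two monotonicity statements separately, establishing the one for $r_\mathrm{f}$ first and then deducing the one for $p_\mathrm{f}$ from it together with a transport estimate on the core manifold.

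\emph{Monotonicity of $r_\mathrm{f}$.} Fix $U_2<u_1<u_2<u_\mathrm{f}$ and compare the two orbits $(u_i,p_i)(\xi)$ of~\eqref{eq:slow_r_red} with data $u_i(r_0)=u_i$, $p_i(r_0)=p_\mathrm{in}(u_i)$. Since $p_\mathrm{in}(u)=h_0(u,r_0)=\tfrac12 f(u)r_0(1+\mathcal{O}(r_0))$ and $f'(u)=1+v_+(u)^2+2uv_+(u)v_+'(u)>0$, we have $p_\mathrm{in}(u_1)<p_\mathrm{in}(u_2)$. Setting $w=u_2-u_1$, $z=p_2-p_1$ one gets $w_\xi=z$, $z_\xi=-z/r+f'(\theta(\xi))w$ with $f'(\theta)>0$, where $\theta(\xi)$ lies between $u_1(\xi)$ and $u_2(\xi)$; since $w(0),z(0)>0$, a first-crossing argument (if $z$ vanished first, $z_\xi>0$ there; if $w$ vanished first, then $w_\xi=z>0$ up to that point) shows $w,z>0$ for all $\xi$ on which both orbits exist. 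Hence $u_2(\xi)>u_1(\xi)$, so the orbit from $u_2$ reaches $\{u=u_\mathrm{f}\}$ at a strictly smaller $\xi$, hence (as $r=r_0+\xi$) at a strictly smaller $r$, giving $r_\mathrm{f}(u_2)<r_\mathrm{f}(u_1)$. The crossing is transverse, since $u_\xi=p_\mathrm{f}>0$ there ($p$ is strictly increasing from $p_\mathrm{in}>0$), so by the implicit function theorem $\xi_\mathrm{f}(u_\mathrm{in})$ is smooth and $r_\mathrm{f}'(u_\mathrm{in})=\xi_\mathrm{f}'(u_\mathrm{in})=-U_{u_\mathrm{in}}(\xi_\mathrm{f})/p_\mathrm{f}$; the variational quantities $(W,Z)=(U_{u_\mathrm{in}},P_{u_\mathrm{in}})$ solve the same linear system as $(w,z)$ with $W(0)=1>0$, $Z(0)=p_\mathrm{in}'(u_\mathrm{in})>0$, so the same argument gives $W(\xi_\mathrm{f})>0$ and thus $r_\mathrm{f}'(u_\mathrm{in})<0$.

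\emph{Monotonicity of $p_\mathrm{f}$: reduction to a graph.} The key point is that $\overline{\Gamma}_\mathrm{in}$ — the forward image of $\Gamma_\mathrm{in}\subset\{r=r_0\}$ under~\eqref{eq:slow_r_red} — is, on the region it sweeps out (where $u\in(U_2,u_\mathrm{f})$), a graph $p=h(u,r)$: by the ordering above, two distinct orbits from $\Gamma_\mathrm{in}$ sit at the same $\xi$, hence the same $r$, whenever they have equal $r$-coordinate, but then their $u$-coordinates differ, so no $(u,r)$ is hit twice. Near $r=r_0$ this graph coincides with the graph $p=h_0(u,r)$ of Proposition~\ref{prop:well0}. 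Invariance of the surface under~\eqref{eq:slow_r_red}, where $p_\xi=-p/r+f(u)$, forces
\begin{equation*}
h_r = f(u) - \frac{h}{r} - h\,h_u .
\end{equation*}
Differentiating this relation in $r$, using $h_{ur}=h_{ru}$, and evaluating the result along any orbit of~\eqref{eq:slow_r_red} contained in the surface (where $u_\xi=p=h$, $r_\xi=1$), one finds that $\phi:=h_r$ obeys the linear transport equation
\begin{equation*}
\frac{\mathrm{d}}{\mathrm{d}\xi}\,\phi = -\left(\frac1r+h_u\right)\phi + \frac{h}{r^2}.
\end{equation*}

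\emph{Conclusion and main obstacle.} Along the orbit from any $u_\mathrm{in}\in(U_2,u_\mathrm{f})$ we have $h=p>0$ and $r>0$, so the inhomogeneity $h/r^2$ is strictly positive, while at $r=r_0$ we have $\phi=\partial_r h_0(u_\mathrm{in},r_0)=\tfrac12 f(u_\mathrm{in})(1+\mathcal{O}(r_0))>0$; hence, again by a first-crossing argument, $\phi=h_r>0$ along the whole orbit up to and including the crossing with $\{u=u_\mathrm{f}\}$, so $h_r(u_\mathrm{f},r_\mathrm{f}(u_\mathrm{in}))>0$. Since $p_\mathrm{f}(u_\mathrm{in})=h(u_\mathrm{f},r_\mathrm{f}(u_\mathrm{in}))$, differentiation gives $p_\mathrm{f}'(u_\mathrm{in})=h_r(u_\mathrm{f},r_\mathrm{f})\,r_\mathrm{f}'(u_\mathrm{in})<0$ by the first part. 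The monotonicity of $r_\mathrm{f}$ is a routine comparison; the real content is in $p_\mathrm{f}$, where naively comparing the $p$-coordinates of two ordered orbits at their (distinct) crossing times does not decide the sign. The device that resolves this is to view $\Gamma_\mathrm{f}$ as a slice of the graph $h$ of the invariant core surface and to exploit the sign of the inhomogeneous term in the transport equation for $h_r$; the only care needed beyond this is to check that $\overline{\Gamma}_\mathrm{in}$ is a $C^1$ graph on the relevant (relatively compact) region and that $h_r$ extends continuously to the boundary face $\{u=u_\mathrm{f}\}$, which follows from smooth dependence on initial conditions and transversality of the crossing.
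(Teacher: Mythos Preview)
Your proof is correct, and the argument for $r_\mathrm{f}'(u_\mathrm{in})<0$ is essentially the paper's comparison argument (with the nice touch of passing to the variational equation to obtain a genuine derivative sign rather than only strict monotonicity).

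For $p_\mathrm{f}'(u_\mathrm{in})<0$, however, you take a genuinely different route. The paper argues by contradiction in the $(u,p)$-projection: since $u'>0$ along both orbits, each can be written as a graph $p=P_i(u)$; assuming $p_\mathrm{f}(u_{\mathrm{in},1})\le p_\mathrm{f}(u_{\mathrm{in},2})$ forces these projected curves to meet at some least $(\tilde u,\tilde p)$, reached at $\tilde r_1>\tilde r_2$, and comparing $\mathrm{d}u/\mathrm{d}p$ there yields a contradiction. Your approach instead views the swept surface $\overline{\Gamma}_\mathrm{in}$ as a graph $p=h(u,r)$, derives the first-order PDE $h_r=f(u)-h/r-hh_u$, and then shows that $\phi:=h_r$ obeys a scalar transport equation along each orbit with strictly positive inhomogeneity $h/r^2$ and positive initial data $\partial_r h_0(u_\mathrm{in},r_0)=\tfrac12 f(u_\mathrm{in})(1+\mathcal{O}(r_0))$, so $h_r>0$ up to $\{u=u_\mathrm{f}\}$; then $p_\mathrm{f}'=h_r\cdot r_\mathrm{f}'<0$. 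This is more systematic: it isolates the geometric quantity ($h_r$) whose sign actually encodes the desired monotonicity, and the positivity follows from a clean variation-of-constants argument rather than a projection-and-contradiction step. The paper's argument is shorter and avoids verifying the graph property and the smoothness of $h$; your argument, once the graph representation is justified (which you do via the Jacobian $\det\begin{pmatrix}W&p\\0&1\end{pmatrix}=W>0$), is arguably more transparent and would generalize more readily to related settings.
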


\newcommand{\pt}{\Tilde{p}}
\newcommand{\ut}{\Tilde{u}}

\begin{proof}
We begin with the statement concerning the sign of $r_\mathrm{f}'(u_\mathrm{in})$. Consider two trajectories with two different initial conditions $u_\mathrm{in}=u_{\mathrm{in},1}$ and $u_\mathrm{in}=u_{\mathrm{in},2}$, with $u_{\mathrm{in},1} < u_{\mathrm{in},2}$, which trace out solution curves $(u,p)=(u(r;u_\mathrm{in}),p(r;u_\mathrm{in}))$. At $r=r_0$, we have that $p_\mathrm{in}(u_{\mathrm{in},1})=p(r_0; u_{\mathrm{in},1})< p(r_0; u_{\mathrm{in},2})= p_\mathrm{in}(u_{\mathrm{in},2})$ by~\eqref{eq:gamma_in}.

We claim this implies $p(r; u_{\mathrm{in},1})< p(r; u_{\mathrm{in},2})$ for all $r>r_0$. Suppose for contradiction that $p(\tilde{r};u_{\mathrm{in},2}) = p(\tilde{r}; u_{\mathrm{in},2})$, or for some $r=\tilde{r}$, which represents the first $r$ value where the two trajectories cross. (Note that since $p(r_0; u_{\mathrm{in},1})< p(r_0; u_{\mathrm{in},2})$, and $p(r;u_\mathrm{in})$ is continuous, we know that there exists this first value $\tilde{r}$.) Thus $p(r;u_{\mathrm{in},2}) < p(r; u_{\mathrm{in},2})$ for all $r <\tilde{r}$, which implies that $u(r;u_{\mathrm{in},2}) <u(r; u_{\mathrm{in},2})$ for all $r < \tilde{r}$, since $u_{\mathrm{in},1} < u_{\mathrm{in},2}$ and $u_\xi = p$. Recall that $p_\xi = -\frac{p}{r} + f(u)$. Then, since $f(u)$ is an increasing function of $u$, we have that $p_\xi(\tilde{r};u_{\mathrm{in},1}) < p_\xi(\tilde{r};u_{\mathrm{in},2})$. (Note that this is a strict inequality since $f'(u) > 0$.) This contradicts the fact that these solution curves intersect at $\tilde{r}$ since $p(r;u_{\mathrm{in},2}) < p(r; u_{\mathrm{in},2})$ for all $r <\tilde{r}$.

Therefore $p(r;u_{\mathrm{in},2}) < p(r; u_{\mathrm{in},2})$ for all $r$. This also means that $u'(r;u_{\mathrm{in},1}) < u'(r;u_{\mathrm{in},2})$ for all $r$. Thus, since $u_{\mathrm{in},1} < u_{\mathrm{in},2}$, we also obtain that $u(r;u_{\mathrm{in},1}) < u(r;u_{\mathrm{in},2})$ for all $r$. Recall that $u(r;u_{\mathrm{in},1}) < u(r;u_{\mathrm{in},2}) < u_\mathrm{f}$ for $r  <r_\mathrm{f}(u_{\mathrm{in},2})$. Thus, $r_\mathrm{f}(u_{\mathrm{in},2}) < r_\mathrm{f}(u_{\mathrm{in},1})$. Thus, whenever $u_{\mathrm{in},1} < u_{\mathrm{in},2}$, we have that $r_\mathrm{f}(u_{\mathrm{in},2}) < r_\mathrm{f}(u_{\mathrm{in},1})$, so that $r_\mathrm{f}(u_\mathrm{in})$ is a strictly decreasing function.

We now turn to the sign of $p_\mathrm{f}'(u_\mathrm{in})$. Again, we consider two trajectories with initial conditions $u_\mathrm{in}=u_{\mathrm{in},1},u_{\mathrm{in},2}$, with $u_{\mathrm{in},1} < u_{\mathrm{in},2}$. Suppose for contradiction that $p_\mathrm{f}(u_{\mathrm{in},1}) \leq p_\mathrm{f}(u_{\mathrm{in},2})$. Since $u'(r;u_{\mathrm{in},1}), u'(r;u_{\mathrm{in},2})>0$, there exists a least $u=\tilde{u}<u_\mathrm{f}$ and $\tilde{r}_1,\tilde{r}_2<r_\mathrm{f}$ at which $u(\tilde{r}_1;u_{\mathrm{in},1})= u(\tilde{r}_2;u_{\mathrm{in},2})=\tilde{u}$ and $p(\tilde{r}_1;u_{\mathrm{in},1}) = p(\tilde{r}_2;u_{\mathrm{in},2})=\tilde{p}$. Since $f'(u)>0$, and using a similar argument as above, we have that $\tilde{r}_1 > \tilde{r}_2$. 

We express the solution curve $(u,p)=(u(r;u_{\mathrm{in},1}),p(r;u_{\mathrm{in},1}))$ as a graph $u=u_1(p)$ over $p$, and similarly the curve $(u,p)=(u(r;u_{\mathrm{in},1}),p(r;u_{\mathrm{in},1}))$ as $u=u_2(p)$. Then at $p=\tilde{p}$, we have
\begin{align*}
\frac{du_i}{dp} = \frac{\tilde{p}}{-\frac{\tilde{p}}{\tilde{r}_i} + f(\tilde{u})}, \quad i=1,2
\end{align*}
from which we see that $\frac{du_1}{dp}<\frac{du_2}{dp}$, which contradicts the fact that $u_1(p)$ must cross $u_2(p)$ from below. Thus, we conclude that for any $u_{\mathrm{in},1} < u_{\mathrm{in},2}$, we have $p_\mathrm{f}(u_{\mathrm{in},1}) > p_\mathrm{f}(u_{\mathrm{in},2})$, so $p_\mathrm{f}(u_\mathrm{in})$ is a strictly decreasing function of $u_\mathrm{in}$.
\end{proof}

Lemma~\ref{lem:p_monotonic_u} above shows that $p_\mathrm{f}(u_\mathrm{in})$ and $r_\mathrm{f}(u_\mathrm{in})$ are both strictly decreasing functions of $u_\mathrm{in}\in(U_2,u_\mathrm{f})$. We now consider the limiting behavior of $(p_\mathrm{f},r_\mathrm{f})(u_\mathrm{in})$ as $u_\mathrm{in}$ approaches the limits $u_\mathrm{in}=U_2$ and $u_\mathrm{in}=u_\mathrm{f}$. In preparation, we consider~\eqref{eq:slow_r_red} in the limit $r\to\infty$, resulting in the vector field
\begin{align}\label{eq:slow_r_red_infty}
\begin{split}
    u_\xi &= p\\
    p_\xi &= u - a+u(v_+(u))^2 = f(u),
    \end{split}
\end{align}
which admits the conserved quantity
\begin{align}\label{eq:energy}
    E(u,p) &= -\frac{1}{2}p^2 + \int_{U_2}^{u} f(\tilde{u}) d\tilde{u}\\
    &= -\frac{1}{2}p^2 + \int_{U_2}^{u} \tilde{u}-a+\tilde{u}(v_+(\tilde{u}))^2 d\tilde{u}.
%    &= -\frac{p^2}{2} + 
% \frac{1}{4b^2} (-(4bm - u) (-4ab^2 + 8b^3m + u+ 2 b^2 u)\\
% &  + 8 b^2 m^2 \log(2 \sqrt{b m}) + \sqrt{w (-4 b m +u)} \left(-2 b m +u - \frac{8 b^2 m^2 \log(\sqrt{u}+\sqrt{-4 b m + u})}{\sqrt{w(-4 b m + u)}}\right))
\end{align}
Note that $E(U_2,0)=0$, and define $p_\mathrm{f,\infty}$ to be the unique positive solution of $E(u_\mathrm{f},p_\mathrm{f,\infty})=0$, corresponding to the intersection of the unstable manifold of the saddle equilibrium $(u,p)=(U_2,0)$ of~\eqref{eq:slow_r_red_infty} with the set $u=u_\mathrm{f}$. We have the following.

\begin{Lemma} \label{lem:pin_limits} The curve $(p,r)=(p_\mathrm{f},r_\mathrm{f})(u_\mathrm{in})$ satisfies the following.
\begin{enumerate}[(i)]
\item \label{lem:pin_limitsi} $\lim_{u_\mathrm{in}\to u_\mathrm{f}}(p_\mathrm{f}(u_\mathrm{in}),r_\mathrm{f}(u_\mathrm{in})) = (p_\mathrm{in}(u_\mathrm{f}),r_0)$
\item \label{lem:pin_limitsii} $\lim_{u_\mathrm{in}\to U_2}(p_\mathrm{f}(u_\mathrm{in}),r_\mathrm{f}(u_\mathrm{in})) = (p_\mathrm{f,\infty},\infty)$
\end{enumerate}
\end{Lemma}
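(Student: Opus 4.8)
The two assertions have rather different character: \ref{lem:pin_limitsi} follows by combining continuity of the flow with the monotonicity bound already proved, whereas \ref{lem:pin_limitsii} requires a quantitative analysis of the slow passage of the trajectory past the saddle equilibrium $(U_2,0)$ and is the real content of the lemma. For \ref{lem:pin_limitsi}, the plan is to recall from the proof of Lemma~\ref{lem:rf_well_defined} that along the trajectory issuing from $p=p_\mathrm{in}(u_\mathrm{in})$ at $r=r_0$ one has $u_\xi=p\ge p_\mathrm{in}(u_\mathrm{in})>0$ for all $\xi\ge 0$, so $u(\xi)\ge u_\mathrm{in}+p_\mathrm{in}(u_\mathrm{in})\xi$ and hence $r_\mathrm{f}(u_\mathrm{in})\le r_0+(u_\mathrm{f}-u_\mathrm{in})/p_\mathrm{in}(u_\mathrm{in})$. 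Since also $r_\mathrm{f}(u_\mathrm{in})\ge r_0$ and $p_\mathrm{in}(u_\mathrm{in})\to p_\mathrm{in}(u_\mathrm{f})>0$ as $u_\mathrm{in}\to u_\mathrm{f}$, this squeezes $r_\mathrm{f}(u_\mathrm{in})\to r_0$, so the elapsed time $\xi_\mathrm{f}=r_\mathrm{f}(u_\mathrm{in})-r_0\to 0$; as the vector field~\eqref{eq:slow_r_red} is smooth and bounded near $(u_\mathrm{f},p_\mathrm{in}(u_\mathrm{f}),r_0)$, continuity of the flow then gives $p_\mathrm{f}(u_\mathrm{in})=p(\xi_\mathrm{f})\to p(0)=p_\mathrm{in}(u_\mathrm{f})$.

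For \ref{lem:pin_limitsii} I would argue in two steps. First, since $a-U_2-U_2 v_+(U_2)^2=0$, i.e.\ $f(U_2)=0$, the point $(u,p)=(U_2,0)$ is an equilibrium of~\eqref{eq:slow_r_red} for \emph{every} value of $r$ (the line $\ell_2$ is invariant), and by~\eqref{eq:gamma_in} the initial data $(u_\mathrm{in},p_\mathrm{in}(u_\mathrm{in}))$ converge to it as $u_\mathrm{in}\to U_2^+$. Continuous dependence on compact $\xi$-intervals then implies that for any fixed $T>0$, once $u_\mathrm{in}$ is sufficiently close to $U_2$ the trajectory stays in an arbitrarily small neighbourhood of $(U_2,0)$ for all $\xi\in[0,T]$; in particular $u$ has not yet reached $u_\mathrm{f}$, so $r_\mathrm{f}(u_\mathrm{in})\ge r_0+T$. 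Letting $T\to\infty$ yields $r_\mathrm{f}(u_\mathrm{in})\to\infty$, consistent with the monotonicity of $r_\mathrm{f}$ from Lemma~\ref{lem:p_monotonic_u}.

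Second, to locate $\lim p_\mathrm{f}(u_\mathrm{in})$ I would track the energy $E$ of~\eqref{eq:energy}: a direct computation using~\eqref{eq:slow_r_red} gives $\tfrac{\mathrm d}{\mathrm d\xi}E(u,p)=p^2/r\ge 0$, so $E$ is nondecreasing along the trajectory, while $E(u_\mathrm{in},p_\mathrm{in}(u_\mathrm{in}))\to E(U_2,0)=0$. Writing the total increment as an integral over $u$ (legitimate since $u$ is strictly increasing, with $\mathrm dr/\mathrm du=1/p$ from $r_\xi=1$, $u_\xi=p$),
\begin{align*}
E(u_\mathrm{f},p_\mathrm{f}(u_\mathrm{in}))-E(u_\mathrm{in},p_\mathrm{in}(u_\mathrm{in}))=\int_{u_\mathrm{in}}^{u_\mathrm{f}}\frac{p(u)}{r(u)}\,\mathrm du,
\end{align*}
where $p(u)$, $r(u)$ denote the trajectory written as a graph over $u$ and $p(u)^2=2\big(\int_{U_2}^u f-E\big)$. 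For $u_\mathrm{in}$ near $U_2$ one checks $E(u_\mathrm{in},p_\mathrm{in}(u_\mathrm{in}))>0$ (using $r_0$ small and $p_\mathrm{in}=\mathcal O(r_0)$), hence $E\ge 0$ and $p(u)\le P^*:=\big(2\int_{U_2}^{u_\mathrm{f}}f\big)^{1/2}$ along the trajectory. Splitting the integral at $u=U_2+\eps$: on $[u_\mathrm{in},U_2+\eps]$ the bound $p(u)^2\le 2\int_{U_2}^u f$ together with $f(U_2)=0$ gives $p(u)\le C(u-U_2)$, so this piece is $\le C\eps^2/(2r_0)$ uniformly in $u_\mathrm{in}$; on $[U_2+\eps,u_\mathrm{f}]$ we use $p(u)\le P^*$ and $r(u)\ge r(U_2+\eps;u_\mathrm{in})$, and the latter tends to $\infty$ as $u_\mathrm{in}\to U_2$ — again by continuous dependence, since the trajectory takes ever longer to reach $u=U_2+\eps$ — so this piece tends to $0$. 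Hence $\limsup_{u_\mathrm{in}\to U_2}E(u_\mathrm{f},p_\mathrm{f}(u_\mathrm{in}))\le C\eps^2/(2r_0)$ for every $\eps>0$, forcing $E(u_\mathrm{f},p_\mathrm{f}(u_\mathrm{in}))\to 0$; since $p_\mathrm{f}(u_\mathrm{in})>0$ by Lemma~\ref{lem:rf_well_defined}, the limit must be the unique positive root $p_\mathrm{f,\infty}$ of $E(u_\mathrm{f},\cdot)=0$.

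The main obstacle is precisely this last energy estimate: $E$ is conserved only for the $r\to\infty$ limiting system~\eqref{eq:slow_r_red_infty}, so one must show that its drift $\int p^2/r\,\mathrm d\xi$ along the true trajectory of~\eqref{eq:slow_r_red} is negligible as $u_\mathrm{in}\to U_2$. The mechanism is that the departure from the saddle $(U_2,0)$ — the only phase in which $p$ is $\mathcal O(1)$ — occurs only after $r$ has already grown large, so the $1/r$ weight suppresses the drift; making this uniform is what forces the $\eps$-splitting, since near $u=U_2$ one has $p$ small but cannot claim $r$ large, while away from $u=U_2$ one has $r$ large but cannot claim $p$ small. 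Everything else (continuous dependence, the monotonicities of Lemma~\ref{lem:p_monotonic_u}, and the growth bound of Lemma~\ref{lem:rf_well_defined}) is already in hand.
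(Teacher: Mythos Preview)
Your argument is correct, and for part~\ref{lem:pin_limitsii} it takes a genuinely different route from the paper. The paper compactifies at infinity via $k=1/r$, observes that the limiting system at $k=0$ is the Hamiltonian flow~\eqref{eq:slow_r_red_infty} with saddle $(U_2,0)$, and then tracks the two-dimensional manifold $\overline{\Gamma}_\mathrm{in}$ geometrically: it first follows the invariant line $\ell_2$ out to large $r$ (small $k$), checks that at $k=k_0$ the intersection $\overline{\Gamma}_\mathrm{in}\cap\{k=k_0\}$ is tangent to leading order to the center-unstable manifold $\mathcal{W}^{\mathrm{cu},\infty}(U_2,0)$ and transverse to $\mathcal{W}^{\mathrm{cs},\infty}(U_2,0)$, and then invokes an inclination/$\lambda$-lemma type argument to conclude that $\overline{\Gamma}_\mathrm{in}$ aligns with the branch $\mathcal{W}^{\mathrm{u},\infty}(U_2,0)$ of the level set $E=0$ as $k\to 0$. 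By contrast, you never compactify and never invoke invariant-manifold theory: you compute the drift $\tfrac{\mathrm d}{\mathrm d\xi}E=p^2/r$ directly, rewrite it as $\int p(u)/r(u)\,\mathrm du$, and split the $u$-interval at $U_2+\eps$ so that near $U_2$ the bound $p(u)\le C(u-U_2)$ (from $E\ge 0$) controls the integrand uniformly, while away from $U_2$ the divergence $r(U_2+\eps;u_\mathrm{in})\to\infty$ kills the remainder. Your approach is more elementary and gives an explicit rate $\mathcal{O}(\eps^2/r_0)$ for the energy drift; the paper's approach is more geometric, fits the invariant-manifold language used throughout the existence proof, and would extend more readily to situations without an explicit Lyapunov function. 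For part~\ref{lem:pin_limitsi} you supply the squeezing estimate $r_0\le r_\mathrm{f}(u_\mathrm{in})\le r_0+(u_\mathrm{f}-u_\mathrm{in})/p_\mathrm{in}(u_\mathrm{in})$ that the paper leaves implicit; this is a welcome clarification.
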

\begin{proof}
The limit~\ref{lem:pin_limitsi} follows directly from the definition of $(p_\mathrm{f}(u_\mathrm{in}),r_\mathrm{f}(u_\mathrm{in}))$ and~\eqref{eq:gamma_in}. 

For~\ref{lem:pin_limitsii}, we aim to compute the limit $\lim_{r_\mathrm{f}\to \infty} p_\mathrm{f}(r_\mathrm{f})$. Note that $p_\mathrm{in}(u_\mathrm{in})\to 0$ as $u_\mathrm{in} \to U_2$, as $\Gamma_\mathrm{in}$ coincides at $u=U_2$ with the invariant line $\ell_2$ corresponding to the fixed point $(u,p)=(U_2,0)$. Hence to determine the behavior of trajectories lying on $\Gamma_\mathrm{in}$ with values of $u\approx U_2$, we can track such trajectories along the invariant line $\ell_2$ to large values of $r$. In particular, for any fixed $R_0\gg1$, there exists $\delta_{R_0}$ such that the forward evolution of $\Gamma_\mathrm{in}$ traces out a two dimensional manifold $\overline{\Gamma}_\mathrm{in}$ which contains the invariant line $\ell_2$ and intersects the plane $r=R_0$ in a curve which can be represented as a graph $p=h_2(u,R_0)$ over $|u-U_2|<\delta_{R_0}$ satisfying $\partial_u h_2(U_2,R_0)=\sqrt{f'(U_2)}+\mathcal{O}(1/R_0)$.

We set $r=1/k$ and arrive at the system
\begin{align}
\begin{split}\label{eq:slow_r_red_k}
 u_\xi&=   p\\
  p_\xi&= -kp+f(u)\\
k_\xi&= -k^2.
\end{split}
\end{align}
The invariant set $k=0$ (corresponding to $r=\infty$) contains the limiting system~\eqref{eq:slow_r_red_infty} for the variables $(u,p)$, whose solutions lie on level sets of the function $E(u,p)$, with the saddle-type equilibrium $(u,p)=(U_2,0)$ satisfying $E(U_2,p)=0$. The two branches of this level set correspond to the one-dimensional stable and unstable manifolds $\mathcal{W}^{\mathrm{u}/\mathrm{s},\infty}(U_2,0)$ of the equilibrium within the invariant set $k=0$, which are tangent to the lines $p=\pm\sqrt{f'(U_2)}(u-U_2)$. The manifolds $\mathcal{W}^{\mathrm{u}/\mathrm{s},\infty}(U_2,0)$ extend to two-dimensional center-stable/center-unstable manifolds $\mathcal{W}^{\mathrm{cu}/\mathrm{cs},\infty}(U_2,0)$ for small $k\ll1$ which intersect along the invariant line $\ell_2=\{u=U_2, p=0\}$. Let $k_0:= 1/R_0\ll1$. Recall the manifold $\overline{\Gamma}_\mathrm{in}$ intersects the set $k=k_0$ in a a curve which can be represented as a graph over $|u-U_2|<\delta_{R_0}$ given by $p=h_2(u,R_0)$ satisfying $\partial_u h_2(U_2,R_0)=\sqrt{f'(U_2)}+\mathcal{O}(k_0)$. Therefore at $k=k_0$, $\overline{\Gamma}_\mathrm{in}$ is aligned close to $\mathcal{W}^{\mathrm{cu},\infty}(U_2,0)$ at the linear level and transversely intersects $\mathcal{W}^{\mathrm{cs},\infty}(U_2,0)$ along the invariant line $\ell_2$. Thus as $k\to0$, $\overline{\Gamma}_\mathrm{in}$ aligns along the branch of the level set $E(U_2,p)=0$ corresponding to $\mathcal{W}^{\mathrm{u},\infty}(U_2,0)$, and contains the invariant line $\ell_2$. From this, we see that as $u_\mathrm{in}\to U_2$, $r_\mathrm{f}(u_\mathrm{in})\to \infty$ and the corresponding solution approaches the manifold $\mathcal{W}^\mathrm{u}_0(U_2,0)$, so that $p_\mathrm{f}(u_\mathrm{in})\to p_\mathrm{f,\infty}$ as claimed.
\end{proof}

Combining this with the results of Lemma~\ref{lem:out_monotonic}, we are able to complete the proof of Proposition~\ref{prop:reduced_tr}.
\begin{proof}[Proof of Proposition~\ref{prop:reduced_tr}]
As described above, the forward evolution of $\Gamma_\mathrm{in}$ reaches the set $u=u_\mathrm{f}$ in a curve $(p_\mathrm{f},r_\mathrm{f})(u_\mathrm{in})$ parameterized by $u_\mathrm{in}\in(U_2,u_\mathrm{f})$. By Lemma~\ref{lem:p_monotonic_u}, $p_\mathrm{f}(u_\mathrm{in})$ and $r_\mathrm{f}(u_\mathrm{in})$ are both strictly decreasing functions of $u_\mathrm{in}\in(U_2,u_\mathrm{f})$, so we can express the curve $(p_\mathrm{f},r_\mathrm{f})(u_\mathrm{in})$ as a graph $p_\mathrm{f}=p_\mathrm{f}(r)$  satisfying $p_\mathrm{f}'(r)>0$ for $r\in (r_0, \infty)$, $\lim_{r\to r_0}p_\mathrm{f}(r)= p_\mathrm{in}(u_\mathrm{f})=\mathcal{O}(r_0)$ and $\lim_{r\to \infty}p_\mathrm{f}(r)= p_\mathrm{f,\infty}$.

Furthermore, by Lemma~\ref{lem:out_monotonic}, within the set $u=u_\mathrm{f}$, $\Gamma_\mathrm{out}$ is given by a graph $p=p_\mathrm{out}(r)$ which satisfies $p_\mathrm{out}'(r)<0$ for $r\in (0,\infty)$ with $\lim_{r\to0}p_\mathrm{out}(r) = \infty$ and $\lim_{r\to\infty}p_\mathrm{out}(r) = a-u_\mathrm{f}$. 

Therefore, in order for the sets $\Gamma_\mathrm{in}$ and $\Gamma_\mathrm{out}$ to intersect transversely at some $r=r_I$, it only remains to check whether $p_\mathrm{f,\infty}>a-u_\mathrm{f}$. Equivalently, a transverse intersection occurs provided $E(u_\mathrm{f}, a-u_\mathrm{f}) > E(u_\mathrm{f},p_\mathrm{f,\infty})=E(U_2,0)=0$, which occurs if
\begin{align}
0<-\frac{1}{2}(a-u_\mathrm{f})^2 + \int_{U_2}^{u_\mathrm{f}} u-a+u(v_+(u))^2 \mathrm{d}u,
\end{align}
or equivalently
\begin{align}
\int_{U_2}^{u_\mathrm{f}} \frac{u - 2mb + \sqrt{u^2 - 4umb}}{2b^2} \mathrm{d}u> \frac{1}{2}(a-U_2)^2.
\end{align}
\end{proof}

\subsection{Proof of Theorems~\ref{thm:spot_existence}--\ref{thm:gap_existence}}\label{sec:existence_proofs}
The results of the preceding sections~\S\ref{sec:farfield}--\ref{sec:m_delta} allow us to complete the construction of radial spot solutions in~\eqref{eq:modifiedKlausmeier}.
\begin{proof}[Proof of Theorem~\ref{thm:spot_existence}]
As in~\S\ref{sec:m_delta}, we track the far-field manifold $\mathcal{W}^\mathrm{s}(\mathcal{B}^\mathrm{far}_\delta)$ into a neighborhood of $\mathcal{M}^+_\delta$, where it aligns along the unstable fibers of orbits which cross the set $\{u=u_\mathrm{f}\}$ within $\mathcal{O}(\delta)$ of the set $\Gamma_\mathrm{out}$. Likewise, the manifold $\mathcal{W}^\mathrm{cu}_\delta(\mathcal{C})$ of solutions bounded at the core can be tracked into a neighborhood of $\mathcal{M}^+_\delta$, where it aligns within $\mathcal{O}(1/s_\mathrm{c}+\delta)$ of the unstable fibers of orbits crossing the set $\{r=r_0\}$ along the curve $\Gamma_\mathrm{in}$. By Proposition~\ref{prop:reduced_tr}, the forward evolution of $\Gamma_\mathrm{in}$ reaches the set $u=u_\mathrm{f}$ in the curve $\Gamma_\mathrm{f}$ which transversely intersects $\Gamma_\mathrm{out}$ at some $r=r_I$, provided
\begin{align}\label{eq:spot_integral_thmproof}
\int_{U_2}^{u_\mathrm{f}} \frac{u - 2mb + \sqrt{u^2 - 4umb}}{2b^2} \mathrm{d}u> \frac{1}{2}(a-U_2)^2.
\end{align}

Therefore, the manifolds $\mathcal{W}^\mathrm{s}(\mathcal{B}^\mathrm{far}_\delta)$ and $\mathcal{W}^\mathrm{cu}_\delta(\mathcal{C})$ intersect transversely, corresponding to a radial spot solution bounded on $r\in[0,\infty)$, with a single sharp interface occurring at $r=r_I+\mathcal{O}(\delta)$. The value $V_\mathrm{c}(a,b,m)$ is determined by the coordinate $v_+(u_0)$ in corresponding fiber of $\mathcal{W}^\mathrm{cu}_\delta(\mathcal{C})$ in the limit $\delta\to0$; see~\eqref{eq:wcu_expansion}. 

 { Finally, the condition~\eqref{eq:spot_condition_exp} can be obtained from a lengthy but straightforward computation by carrying out the integration in~\eqref{eq:spot_integral_thmproof} and using the steady equation satisfied by $U_2$. }
\end{proof}
Regarding gaps, we similarly complete the proof of Theorem~\ref{thm:gap_existence}.
\begin{proof}[Proof of Theorem~\ref{thm:gap_existence}]
The argument is similar to that of Theorem~\ref{thm:spot_existence}. We briefly outline the differences which result in the opposite condition~\eqref{eq:gap_condition}.

The geometry of the construction is quite similar, except opposite in that the far field manifold consists of solutions asymptotic to the equilibrium $(U_2,V_2)$ on the critical manifold $\mathcal{M}^+_\delta$, and the core manifold consisting of solutions bounded as $r\to0$ is constructed from orbits originating near the desert state $(U_0,V_0)$ on $\mathcal{M}^0_\delta$. In this case building the core manifold is somewhat less involved since the flow on $\mathcal{M}^0_\delta$ is linear, and the solutions are given explicitly in terms of modified Bessel functions. However, in the far field one must deal with the nonlinear flow on $\mathcal{M}^+_\delta$.

In this case, the manifold $\mathcal{W}^\mathrm{cu}_\delta(\mathcal{C})$ intersects  $\mathcal{W}^{\mathrm{s}}(\mathcal{M}^+_\delta)$ in the set $\{u=u_\mathrm{f}\}$ transversely along the unstable fibers of orbits lying within $\mathcal{O}(\delta)$ of the set
\begin{align}\label{eq:gamma_in_gap}
\Gamma_\mathrm{in}:= \left\{u=u_\mathrm{f}, p = p_\mathrm{in}(r):= \frac{u_\mathrm{f}-a}{I_0(r)}I_1(r),  r\in [0,\bar{r}] \right\},
\end{align}
where $\bar{r}>0$ is arbitrary. We note that, using asymptotic properties of modified Bessel functions, it can be shown similarly as in~\S\ref{sec:m_delta} that $p_\mathrm{in}$ is an increasing function of $r$ with $\lim_{r\to0}p_\mathrm{in}(r)=0$ and $\lim_{r\to\infty}p_\mathrm{in}(r)=u_\mathrm{f}-a$.

In the far field, analogously to the case of spots, we can construct the two-dimensional far field manifold $\mathcal{B}^\mathrm{far}_0$ within $\mathcal{M}^+_0$ as the set of solutions of the system~\eqref{eq:slow_r_red} which remain bounded as $r\to\infty$. In the limit $r\to\infty$, this system approaches the system~\eqref{eq:slow_r_red_infty}, the solutions of which are given by level sets of the conserved quantity~\eqref{eq:energy}. Let $p_\mathrm{far}(r_I)$ denote the $p$ coordinate at $r=r_I$ of the solution which is bounded as $r\to\infty$ and satisfies $u(r_I)=u_\mathrm{f}$. Then using similar arguments as in~\S\ref{sec:m_delta}, we see that $p_\mathrm{far}$ is an increasing function of $r_I$ which satisfies $\lim_{r_I\to\infty}p_\mathrm{far}(r_I)=-p_{\mathrm{f}, \infty}$, where $p_{\mathrm{f}, \infty}$ satisfies $E(u_\mathrm{f}, -p_{\mathrm{f}, \infty})=0$.

Thus in order to have an intersection of $\mathcal{W}^\mathrm{cu}_\delta(\mathcal{C})$ and the stable fibers $\mathcal{W}^\mathrm{s}_\delta(\mathcal{B}^\mathrm{far}_\delta)$, and thus a radial gap solution with a single interface at some value of $r=r_I$, a sufficient condition is $E(u_\mathrm{f}, u_\mathrm{f}-a) < E(u_\mathrm{f},-p_\mathrm{f,\infty})=E(U_2,0)=0$, or equivalently,
\begin{align}
\int_{U_2}^{u_\mathrm{f}} \frac{u - 2mb + \sqrt{u^2 - 4umb}}{2b^2} \mathrm{d}u< \frac{1}{2}(a-U_2)^2,
\end{align}
which is precisely the opposite condition as that which guarantees the existence of spots.  {The estimate for $v_\mathrm{g}(r)$ as $r\to0$ is due to the exponential decay along the fast fibers of $\mathcal{W}^\mathrm{u}(\mathcal{M}^0_\delta)$, and the fact that the subspace $\{v=q=0\}$ is invariant under the flow of~\eqref{eq:core} for $\delta>0$.}
\end{proof}

\subsection{Rings, targets, and other radially symmetric solutions}\label{sec:other_radial}
The techniques used in~\S\ref{sec:farfield}--\ref{sec:existence_proofs} to construct spot and gap solution could be used to construct other localized solutions with radial symmetry as follows: The general strategy is the same, in that to construct a solution which is asymptotically constant, and bounded as $r\to0$, as in the case of spots/gaps, we construct a core manifold $\mathcal{W}^\mathrm{u}_\delta(\mathcal{C})$ of states originating near one of the desert or vegetated steady states $(U_0,V_0)$ or $(U_2,V_2)$. We similarly construct a far-field stable manifold $\mathcal{W}^\mathrm{s}_\delta(\mathcal{B}^\mathrm{far}_\delta)$ of solutions bounded as $r\to\infty$ consisting of stable fibers of one of the steady states $(U_0,V_0)$ or $(U_2,V_2)$, which could be the same or different from the state near the core. 

Then, to construct a radially symmetric profile with a desired number of interfaces, the core manifold $\mathcal{W}^\mathrm{u}_\delta(\mathcal{C})$ is tracked along a number of fast jumps alternating between $\mathcal{M}^0_\delta$ and $\mathcal{M}^+_\delta$ as a sequence of radii $r_j, j=1,2,3,\ldots$, in between which $\mathcal{W}^\mathrm{u}_\delta(\mathcal{C})$ follows the slow flow of the corresponding slow manifold $\mathcal{M}^0_\delta$ or $\mathcal{M}^+_\delta$, entering along its fast stable fibers, and exiting aligned along its fast unstable fibers according to the exchange lemma. 

This procedure could be used, in principle, to construct ring or target profiles with any desired (finite) number of interfaces. Some examples obtained numerically are presented in~\S\ref{sec:discussion}.

\section{Spot instabilities}\label{sec:stability}

In this section, we examine the stability of the spot solutions from Theorem~\ref{thm:spot_existence}, and in particular we demonstrate several instabilities exhibited by these solutions when considering 2D perturbations. As we are primarily interested in demonstrating potential instability mechanisms, we do not take a rigorous approach, but rather employ formal asymptotic arguments; however, we emphasize that rigorous results could be obtained using similar methods as in the existence analysis in~\S\ref{sec:existence}.

We linearize~\eqref{eq:modifiedKlausmeier} about a radial spot solution $(u_\mathrm{sp},v_\mathrm{sp})(r)=(u_\mathrm{sp},v_\mathrm{sp})(r;a,b,m,\delta)$ of  Theorem~\ref{thm:spot_existence} using an ansatz of the form 
\begin{align*}
    (U,V) = (u_\mathrm{sp},v_\mathrm{sp})(r)+e^{\lambda t+ i\ell \theta}(u,v)(r)
\end{align*} for $\ell\in\mathbb{Z}$, which results in the eigenvalue problem 
\begin{align}
\begin{split}\label{eq:klaus_stability_problem}
\lambda u&= u_{rr}+\frac{1}{r}u_r-\frac{\ell^2}{r^2}u-\left(1+v_\mathrm{sp}(r)^2\right)u-2u_\mathrm{sp}(r)v_\mathrm{sp}(r)v\\
\lambda v&= \delta^2\left(v_{rr}+\frac{1}{r}v_r-\frac{\ell^2}{r^2}v\right) -mv+v_\mathrm{sp}(r)^2\left(1-bv_\mathrm{sp}(r)\right)u+u_\mathrm{sp}(r)\left(2v_\mathrm{sp}(r)-3bv_\mathrm{sp}(r)^2\right)v.
\end{split}
\end{align}
We consider the essential spectrum associated with spot solutions in~\S\ref{sec:essential_spectrum}. The point spectrum for wave numbers $|\ell|=\mathcal{O}(1)$ is considered in~\S\ref{sec:point_spectrum}, and is evaluated asymptotically in the limit of spots of large radius in~\S\ref{sec:larger1}, where the sideband stability from nearby planar front solutions (see~\S\ref{sec:fronts_stability}) is recovered in the limit $r_I\gg1$. Finally, the point spectrum for large wavenumbers $|\ell|\gg1$ is considered in~\S\ref{sec:largel}.

\subsection{Essential Spectrum}\label{sec:essential_spectrum}
The essential spectrum associated with the radial spot solution $(u_\mathrm{sp},v_\mathrm{sp})(r;a,b,m,\delta)$ is determined by considering the limit $r\to \infty$ in~\eqref{eq:klaus_stability_problem}, and computing the $1$D essential spectrum of the asymptotic rest state $\lim_{r\to \infty}(u_\mathrm{sp},v_\mathrm{sp})(r) = (a,0)$. 

\begin{Lemma}
Consider a spot solution $(u_\mathrm{sp},v_\mathrm{sp})(r;a,b,m,\delta)$ of Theorem~\ref{thm:spot_existence}. Then the essential spectrum $\Sigma_\mathrm{ess}\subset \{\lambda\in \mathbb{C}: \Re \lambda \leq -\beta \}$,   {where $\beta = \min\{1, m\}>0$}.
\end{Lemma}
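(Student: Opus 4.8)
The plan is to compute the essential spectrum of the eigenvalue problem~\eqref{eq:klaus_stability_problem} by reducing to a constant-coefficient problem in the far-field limit $r\to\infty$. Since the spot solution satisfies $(u_\mathrm{sp},v_\mathrm{sp})(r)\to(a,0)$ as $r\to\infty$, the coefficients of~\eqref{eq:klaus_stability_problem} converge to constants: setting $v_\mathrm{sp}=0$ and $u_\mathrm{sp}=a$ (and noting that the terms $\tfrac{1}{r}\partial_r$ and $\tfrac{\ell^2}{r^2}$ decay as $r\to\infty$), the asymptotic system decouples into $\lambda u = u_{rr} - u$ and $\lambda v = \delta^2 v_{rr} - m v$. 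The essential spectrum of the full operator is then governed by the essential (continuous) spectrum of these two asymptotic operators on the line (equivalently the half-line with appropriate boundary conditions at $r=0$, which does not affect the essential spectrum).

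First I would make the decoupling precise: write the linearization as $\lambda w = \mathcal{L}(r) w$ with $w=(u,v)^\top$, and observe that $\mathcal{L}(r)\to\mathcal{L}_\infty$, where $\mathcal{L}_\infty$ is the diagonal constant-coefficient operator $\mathrm{diag}(\partial_{rr}-1,\ \delta^2\partial_{rr}-m)$ (the lower-order terms $\tfrac1r\partial_r-\tfrac{\ell^2}{r^2}$ are relatively compact perturbations and do not alter the essential spectrum). Then I would invoke the standard characterization (e.g.\ via the Fredholm/dispersion-relation criterion for asymptotically constant-coefficient operators on unbounded domains): $\lambda$ is in the essential spectrum only if the dispersion relation $\det(\mathcal{L}_\infty(\nu) - \lambda I)=0$ has a purely imaginary root $\nu=i k$ for some $k\in\mathbb{R}$. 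Here $\mathcal{L}_\infty(\nu)=\mathrm{diag}(\nu^2-1,\ \delta^2\nu^2-m)$, so the two branches of the dispersion set are $\lambda = -k^2-1$ and $\lambda = -\delta^2 k^2 - m$ for $k\in\mathbb{R}$. Both branches are real and lie in $(-\infty,-1]$ and $(-\infty,-m]$ respectively, so their union is contained in $\{\Re\lambda\le -\min\{1,m\}\}$.

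Therefore $\Sigma_\mathrm{ess}\subset\{\lambda\in\mathbb{C}:\Re\lambda\le -\beta\}$ with $\beta=\min\{1,m\}>0$, as claimed. I would phrase the argument so that it applies uniformly in the angular wavenumber $\ell\in\mathbb{Z}$, since the $\ell$-dependent term $-\ell^2/r^2$ vanishes in the limit $r\to\infty$ and only shifts the essential spectrum to the left (it is nonpositive and $r$-decaying), hence never enlarges it rightward. The main (and only real) obstacle is stating the essential-spectrum characterization cleanly on the half-line $r\in[0,\infty)$ with the radial coordinate singularity at $r=0$: one should note that $r=0$ is a regular singular point contributing no essential spectrum (the solution space of bounded solutions near $r=0$ is finite-dimensional, spanned by the Bessel-type solutions $I_{|\ell|}$), so the essential spectrum is entirely determined by the behavior at $r=\infty$. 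Everything else is a routine computation of the two scalar dispersion curves.
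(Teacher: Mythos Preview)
Your proposal is correct and takes essentially the same approach as the paper: both pass to the asymptotic constant-coefficient problem at $r\to\infty$ and locate the essential spectrum via loss of hyperbolicity (equivalently, via the dispersion relations $\lambda=-k^2-1$ and $\lambda=-\delta^2k^2-m$). The paper simply writes the limiting problem as a first-order system with matrix $A_\infty$ and asserts that non-hyperbolicity occurs only for $\Re\lambda\le-\min\{1,m\}$, whereas you spell out the dispersion curves explicitly and add the (correct, and more careful) remarks that the $r^{-1}\partial_r$ and $\ell^2/r^2$ terms are relatively compact perturbations and that the regular singular point at $r=0$ contributes no essential spectrum.
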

\begin{proof}
Letting $r\to \infty$, and writing~\eqref{eq:klaus_stability_problem} as a first order system, we obtain
\begin{align}
\begin{split}\label{eq:klaus_stability_essential}
\begin{pmatrix}u_r\\ p_r\\v_r\\ q_r\end{pmatrix}=A_\infty \begin{pmatrix}u\\ p\\v\\ q\end{pmatrix}, \qquad A_\infty = \begin{pmatrix}0&1&0&0\\ 1+\lambda &0&0&0\\0&0&0&\frac{1}{\delta}\\ 0&0&\frac{m+\lambda}{\delta}&0\end{pmatrix}.
\end{split}
\end{align}
The essential spectrum $\Sigma_\mathrm{ess}$ consists of $\lambda \in\mathbb{C}$ for which the matrix $A_\infty $ is not hyperbolic. A short computation shows that this can only occur in the region $\{\lambda\in \mathbb{C}: \Re \lambda \leq -\beta \}$,   {where $\beta = \min\{1, m\}>0$}.
\end{proof}

\subsection{Point spectrum for $|\ell|=\mathcal{O}(1)$}\label{sec:point_spectrum}
Near the interface $r=r_I$, we change variables to $r = r_I+\delta s$ for $s\in(-\nu |\log \delta|, \nu |\log \delta|)$ for some $\nu\gg1$.  {Due to the exponential convergence of the front of the fast subsystem between the critical manifolds $\mathcal{M}^0_0$ and $\mathcal{M}^+_0$, for $\nu$ chosen sufficiently large, this interval captures the portion of the spot solution which lies outside an $\mathcal{O}(\delta)$-neighborhood of the slow manifolds $\mathcal{M}^0_\delta$ and $\mathcal{M}^+_\delta$. In this region,} the eigenvalue problem~\eqref{eq:klaus_stability_problem} becomes
\begin{align}
\begin{split}\label{eq:klaus_stability_problem_fast}
\delta^2\lambda u&= u_{ss}+\frac{\delta}{r_I+\delta s}u_s-\frac{\delta^2\ell^2}{(r_I+\delta s)^2}u-\delta^2\left(1+v_\mathrm{sp}^2\right)u-2\delta^2u_\mathrm{sp}v_\mathrm{sp}v\\
\lambda v&= v_{ss}+\frac{\delta}{r_I+\delta s}v_s-\frac{\delta^2\ell^2}{(r_I+\delta s)^2}v -mv+v_\mathrm{sp}^2\left(1-bv_\mathrm{sp}\right)u+u_\mathrm{sp}\left(2v_\mathrm{sp}-3bv_\mathrm{sp}^2\right)v.
\end{split}
\end{align}
For wavenumbers $|\ell|=\mathcal{O}(1)$ with respect to $\delta$, we expand solutions of this eigenvalue problem in terms of the reduced fast system
\begin{align}
\begin{split}\label{eq:klaus_stability_problem_fast_reduced}
\lambda v&= v_{ss} -mv+u_\mathrm{f}\left(2v_\mathrm{vd}-3bv_\mathrm{vd}^2\right)v,
\end{split}
\end{align}
which is an eigenvalue problem of Sturm-Liouville type, obtained by linearizing the fast subsystem about the front $\phi_\mathrm{vd}$, and which has a solution given by the derivative $v_\mathrm{vd}'$ when $\lambda=0$. For~\eqref{eq:klaus_stability_problem_fast}, we expand the eigenfunction
\begin{align*}
\begin{pmatrix} u\\v\end{pmatrix}&=\begin{pmatrix} 0\\ v_\mathrm{vd}'\end{pmatrix}+\delta \begin{pmatrix} \bar{u}_1\\ \bar{v}_1\end{pmatrix}+\mathcal{O}(\delta^2)
\end{align*}
 and eigenvalue parameter $\lambda(\ell) =\delta \lambda_1(\ell)+\mathcal{O}(\delta^2)$, as well as the solution
\begin{align*}
\begin{pmatrix} u_\mathrm{sp}\\v_\mathrm{sp}\end{pmatrix}&=\begin{pmatrix} u_\mathrm{f}\\ v_\mathrm{vd}\end{pmatrix}+\delta \begin{pmatrix} u_1\\ v_1\end{pmatrix}+\mathcal{O}(\delta^2).
\end{align*}
Substituting into~\eqref{eq:klaus_stability_problem_fast}, we obtain to leading order
\begin{align}
\begin{split}
0&= (\bar{u}_1)_{ss}+\frac{\delta}{r_I}(\bar{u}_1)_s-\frac{\delta^2\ell^2}{r_I^2}\bar{u}-2\delta u_\mathrm{f}v_\mathrm{vd}v_\mathrm{vd}'\\
 \lambda_1 v_\mathrm{vd}'&=  \mathcal{L}_0\bar{v}_1+\frac{1}{r_I}v_\mathrm{vd}''-\frac{\delta \ell^2}{r_I^2}v_\mathrm{vd}'+ v_\mathrm{vd}^2\left(1-bv_\mathrm{vd}\right)\bar{u}_1+ u_1\left(2v_\mathrm{vd}-3bv_\mathrm{vd}^2\right)v_\mathrm{vd}'+ u_\mathrm{f}\left(2-6bv_\mathrm{vd}^2\right)v_1v_\mathrm{vd}',
\end{split}
\end{align}
where
\begin{align}
\mathcal{L}_0: = \partial_s^2-m+u_\mathrm{f}\left(2v_\mathrm{vd}-3bv_\mathrm{vd}^2\right).
\end{align}
Considering the first equation of~\eqref{eq:klaus_stability_problem_fast}, we write as a first order system
 \begin{align}
 \begin{split}\label{eq:klaus_stability_problem_fast_up}
 (\bar{u}_1)_s &= \delta \bar{p}_1\\
(\bar{p}_1)_s&=-\frac{\delta}{r_I}\bar{p}_1+2 u_\mathrm{f}v_\mathrm{vd}v_\mathrm{vd}',
\end{split}
\end{align}
so that $\bar{u}_1$ is constant to leading order. We now expand the existence problem across the fast jump near $r\approx r_I$ as
\begin{align}
\begin{split}\label{eq:klaus_existence_exp}
0&= (u_1)_{ss}+\frac{\delta}{r_I+\delta s}(u_1)_s+\delta(a-u-uv^2)\\
0&= (v_1)_{ss}+\frac{1}{r_I+\delta s}v_\mathrm{vd}'+\frac{\delta}{r_I+\delta s}(v_1)_s - mv_1+u_\mathrm{f}\left(2v_\mathrm{vd}-3bv_\mathrm{vd}^2\right)v_1+ u_1v_\mathrm{vd}^2(1-bv_\mathrm{vd})
\end{split}
\end{align}
and differentiate the second equation with respect to $s$ to obtain to leading order
\begin{align}
0&= \mathcal{L}_0 (v_1)_s +\frac{1}{r_I}v_\mathrm{vd}''+u_\mathrm{f}\left(2-6bv_\mathrm{vd}\right)v_1v_\mathrm{vd}'+ (u_1)_sv_\mathrm{vd}^2(1-bv_\mathrm{vd})+u_1\left(2v_\mathrm{vd}-3bv_\mathrm{vd}^2\right)v_\mathrm{vd}'.
\end{align}
Substituting into the second equation of~\eqref{eq:klaus_stability_problem_fast}, we have to leading order
\begin{align*}
 \lambda_1 v_\mathrm{vd}'&=  \mathcal{L}_0\bar{v}_1- \mathcal{L}_0 (v_1)_s+ v_\mathrm{vd}^2\left(1-bv_\mathrm{vd}\right)\bar{u}_1-(u_1)_sv_\mathrm{vd}^2(1-bv_\mathrm{vd}).
\end{align*}
Using the fact that $\mathcal{L}_0$ is self adjoint, we take the inner product of this equation with $v_\mathrm{vd}'$, and obtain the solvability condition
\begin{align*}
 \lambda_1 &=  \left((u_1)_s-\bar{u}_1\right)\frac{ -\int_{-\infty}^\infty v_\mathrm{vd}^2\left(1-bv_\mathrm{vd}\right)v_\mathrm{vd}' \mathrm{d}s}{\int_{-\infty}^\infty v_\mathrm{vd}'^2\mathrm{d}s}.
\end{align*}
Since $v_\mathrm{vd}'$ is strictly negative, the sign of $\lambda_1$ is given by the sign of the prefactor $\left((u_1)_s-\bar{u}_1\right)$. The value of $(u_1)_s$ is easily determined through the expansion of the existence problem since
\begin{align}
u_\mathrm{sp}' = \delta (u_1)_s+\mathcal{O}(\delta^2),
\end{align} 
so that $(u_1)_s = \frac{a-u_\mathrm{f}}{K_0(r_I)}K_1(r_I)$ corresponding to the leading order $p$-value across the fast jump at $r=r_I$. From this we obtain the solvability condition
\begin{align}\label{eq:lambda1_solvability}
 \lambda_1 &=  \left(\frac{(a-u_\mathrm{f})}{K_0(r_I)}K_1(r_I)-\bar{u}_1\right)\frac{ -\int_{-\infty}^\infty v_\mathrm{vd}^2\left(1-bv_\mathrm{vd}\right)v_\mathrm{vd}' \mathrm{d}s}{\int_{-\infty}^\infty v_\mathrm{vd}'^2\mathrm{d}s}.
\end{align}

It remains to determine the constant $\bar{u}_1$ in~\eqref{eq:lambda1_solvability}. To determine $\bar{u}_1$, we recall from~\eqref{eq:klaus_stability_problem_fast_up} that $\bar{u}_1$ is constant, whilst $\bar{p}_1$ satisfies to leading order
\begin{align*}
\bar{p}_1^0 &= \bar{p}_1^++\int_{-\infty}^\infty2 u_\mathrm{f}v_\mathrm{vd}v_\mathrm{vd}'\mathrm{d}s\\
&=\bar{p}_1^+-u_\mathrm{f}v_+(u_\mathrm{f})^2,
\end{align*}
where $\bar{p}_1^0, \bar{p}_1^+$ denote the limiting values of $\bar{p}_1$ on either side of the fast jump, when the solution approaches the critical manifolds $\mathcal{M}^0_0, \mathcal{M}^+_0$, respectively. To determine $\bar{u}_1$, we construct bounded eigenfunctions $(\bar{u}^0, \bar{p}^0)(r)$ and $(\bar{u}^+, \bar{p}^+)(r)$ in the slow regions near $\mathcal{M}^0_0, \mathcal{M}^+_0$, respectively, such that across the fast jump at $r=r_I$, we have
\begin{align}
\begin{split}\label{eq:klaus_stability_slowmatch}
\bar{u}^0(r_I) &= \bar{u}^+(r_I)\\
\bar{p}^0(r_I) &= \bar{p}^+(r_I)-u_\mathrm{f}v_+(u_\mathrm{f})^2.
\end{split}
\end{align}
We begin by analyzing the linearized equation in the slow variables on $\mathcal{M}^0_0$. We note that here $v_\mathrm{sp}(r)=0$ to leading order; inspecting~\eqref{eq:klaus_stability_problem} and recalling $\lambda = \delta \lambda_1$, we obtain that $v=0$ to leading order on $\mathcal{M}^0_0$, so that $u$ satisfies the leading order equation
\begin{align}
0 = u_{rr}+\frac{1}{r}u_r-\frac{\ell^2}{r^2}u-u,
\end{align}
which is a modified Bessel's equation. For each $\ell$, this equation admits a unique solution which is bounded as $r\to\infty$, which is the modified Bessel function of the second kind $K_\ell(r)$. We therefore obtain the leading order solution $(\bar{u}^0, \bar{p}^0)(r) = (\alpha K_\ell, \alpha K_\ell')(r)$ in the slow region $r>r_I$.

Near $\mathcal{M}^+_0$, the slow reduced equations are not as straightforward, due to the nonlinear reduced flow on $\mathcal{M}^+_0$. In particular inspecting~\eqref{eq:klaus_stability_problem}, to leading order we have that $v$ satisfies
\begin{align}
v = \frac{v_+(r)^2(1-bv_+(r))}{m-u_+(r)(2v_+(r)-3bv_+(r)^2)}u,
\end{align}
where $v_+(r):=v_+(u_+(r))$ and $u_+(r)$ is the solution in the slow region for $r<r_I$. Substituting into~\eqref{eq:klaus_stability_problem}, we obtain the leading order equation for $u$ in the slow region
\begin{align}
\begin{split}\label{eq:slow_+_ell}
0&= u_{rr}+\frac{1}{r}u_r-\frac{\ell^2}{r^2}u-\left(1+v_+(r)^2\right)u-\frac{2u_+(r)v_+(r)^3(1-bv_+(r))}{m-u_+(r)(2v_+(r)-3bv_+(r)^2)}u\\
&=u_{rr}+\frac{1}{r}u_r-\frac{\ell^2}{r^2}u-u-\frac{v_+(r)^2(m+u_+(r)v_+(r)^2)}{m-u_+(r)(2v_+(r)-3bv_+(r)^2)}u.
\end{split}
\end{align}
 {The solutions of this equation do not appear to have a nice representation in terms of special functions. However, this equation is of the form
\begin{align}
\begin{split}\label{eq:slow_+_ell_f}
0&=u_{rr}+\frac{1}{r}u_r-\frac{\ell^2}{r^2}u-u-f_+(r)u,
\end{split}
\end{align}
where $f_+(r)$ has a well-defined limit as $r\to0$. As $r\to0$, the equation behaves like a Bessel-type equation, and it is possible to show that there is a unique (up to a constant multiple) solution $u=u^*_\ell(r)$ for each $\ell$ which is bounded as $r\to0$. To see this, we rewrite~\eqref{eq:slow_+_ell_f} as
\begin{align}
\begin{split}\label{eq:slow_+_ell_aut}
u_\eta &= d\\
d_\eta&= \ell^2 u+r^2u+r^2f_+(r)u\\
r_\eta&= r,
\end{split}
\end{align}
where $\eta=\log r$. The system~\eqref{eq:slow_+_ell_aut} has a fixed point at the origin which admits a two-dimensional unstable manifold corresponding to a one-dimensional space of solutions of the nonautonomous linear system~\eqref{eq:slow_+_ell_f} which are bounded as $\eta\to-\infty$ ($r\to0$). This space is spanned by a nontrivial solution, which we denote by $u=u^*_\ell(r)$, which will serve as a candidate eigenfunction in the slow region $r<r_I$. We therefore} obtain the leading order solution $(\bar{u}^+, \bar{p}^+)(r) = (\beta u^*_\ell, \beta (u^*_\ell)')(r)$ in the slow region $r<r_I$. Using the conditions~\eqref{eq:klaus_stability_slowmatch}, we obtain
\begin{align}
\begin{split}
\alpha K_\ell(r_I) &= \beta u^*_\ell(r_I)\\
\alpha K_\ell'(r_I) &= \beta (u^*_\ell)'(r_I)-u_\mathrm{f}v_+(u_\mathrm{f})^2,
\end{split}
\end{align}
which we can solve to determine
\begin{align}
\begin{split}
\bar{u}_1&=\alpha K_\ell(r_I) = \frac{u_\mathrm{f}v_+(u_\mathrm{f})^2K_\ell(r_I)u^*_\ell(r_I)}{K_\ell(r_I)(u^*_\ell)'(r_I)-K_\ell'(r_I)u^*_\ell(r_I)},
\end{split}
\end{align}
and so
\begin{align}\label{eq:lambda_1_ell}
 \lambda_1=\lambda_1(\ell) &=  \left(\frac{(a-u_\mathrm{f})K_1(r_I)}{K_0(r_I)}-\frac{u_\mathrm{f}v_+(u_\mathrm{f})^2K_\ell(r_I)u^*_\ell(r_I)}{K_\ell(r_I)(u^*_\ell)'(r_I)-K_\ell'(r_I)u^*_\ell(r_I)}\right)\frac{ -\int_{-\infty}^\infty v_\mathrm{vd}^2\left(1-bv_\mathrm{vd}\right)v_\mathrm{vd}' \mathrm{d}s}{\int_{-\infty}^\infty v_\mathrm{vd}'^2\mathrm{d}s}.
\end{align}
In general, we require information about the solution $u^*_\ell$ to be able to determine the sign of this quantity as a function of $\ell$. This is nontrivial to do in general, as $u^*_\ell$ likely does not have a direct representation in terms of special functions. However, in certain limiting cases we can approximate~\eqref{eq:lambda_1_ell}. For sufficiently large spots $r_I\gg1$, we argue in~\S\ref{sec:larger1} that such spots inherit instabilities from nearby planar front solutions (see~\S\ref{sec:fronts_stability}). We consider the case of large wavenumbers $|\ell|\gg1$ in~\S\ref{sec:largel}.

\subsection{Large spots: recovering the sideband instability}\label{sec:larger1}
In this section, we consider the critical eigenvalue expression~\eqref{eq:lambda_1_ell} in the case of a very large radial spot solution, that is $r_I\gg1$. Near the core, such a solution is approximately constant, while at the interface, the solution resembles a stationary planar front between the desert and vegetated equilibrium states. In the limit $r_I\to\infty$, in the far field the solution approaches the stationary planar front and inherits the (in)stability properties of the front. To see this, in this section we estimate the expression~\eqref{eq:lambda_1_ell} in the asymptotic limit $r_I\to\infty$.

 {
\begin{Remark}
To investigate this limit, one option would be to apply the approach from~\S\ref{sec:fronts_stability} to~\eqref{eq:klaus_stability_problem_fast} under the assumption $r_I\gg1$, as for the stability of traveling fronts. However, since we do not intend to repeat the analysis from~\cite{CDLOR} which results in the expression~\eqref{eq:front_lambda2c_est}, in this section we provide a more direct method by estimating the expression~\eqref{eq:lambda_1_ell} in the asymptotic limit $r_I\to\infty$.
\end{Remark}}

% {\color{red}
% To see this, one option would be to apply the approach from~\S\ref{sec:fronts_stability} to~\eqref{eq:klaus_stability_problem_fast} under the assumption $r_I\gg1$, as for the stability of traveling fronts; this is briefly described in Appendix~\ref{sec:app}. However, since we do not intend to repeat the analysis from~\cite{CDLOR} which results in the expression~\eqref{eq:front_lambda2c_est}, in this section we provide a more direct method by estimating the expression~\eqref{eq:lambda_1_ell} in the asymptotic limit $r_I\to\infty$.
% \begin{Remark}
% We note that the existence analysis in~\S\ref{sec:existence} is valid for parameters such that the spot radius $r_I=\mathcal{O}(1)$ with respect to $\delta$. In the formal asymptotic analysis below, we will need to consider the limit $r_I\to\infty$ for fixed $0<\delta\ll1$ sufficiently small. While we expect the existence analysis does extend in a natural way to large spots, verifying this would necessitate consideration of additional scaling analyses to capture the limit $r_I\to\infty$ independently of $\delta$, so we do not include this here. In the formal asymptotics below, we assume that spots have been constructed for arbitrarily large radius and proceed to investigate their stability.
% \end{Remark}}

We estimate the expression~\eqref{eq:lambda_1_ell} for finite values of $\ell\in\mathbb{Z}$ as $r_I\to\infty$. The expression is explicit (in terms of special functions) except for the value of $u^*_\ell(r_I)$, where $u^*_\ell(r)$ is the unique bounded solution (up to a constant) of~\eqref{eq:slow_+_ell} as $r\to0$. In the case of large $r_I\gg1$, we can approximate $u^*_\ell(r_I)$ as follows. Note that when $\ell=\pm1$,~\eqref{eq:slow_+_ell} reduces to 
\begin{align}
\begin{split}\label{eq:slow_+_ell_pm1}
0&=u_{rr}+\frac{1}{r}u_r-\frac{1}{r^2}u-u-\frac{v_+(r)^2(m+u_+(r)v_+(r)^2)}{m-u_+(r)(2v_+(r)-3bv_+(r)^2)}u,
\end{split}
\end{align}
which admits a solution bounded at $r=0$, given by the derivative $u_+'(r)$, where $u_+(r)$ is the core solution on $\mathcal{M}^+_0$ which satisfies $u_+(r_I)=u_\mathrm{f}$ and $u_+'(r_I) =\frac{(a-u_\mathrm{f})K_1(r_I)}{K_0(r_I)}$. Using reduction of order, we can find another linearly independent solution of this equation, given by
\begin{align*}
    u_2(r):=u_+'(r)\int_{r_I}^r\frac{\mathrm{d}s}{su_+'(s)^2},
\end{align*}
and the Wronskian of $u_1:=u_+'$ and $u_2$ is given by $W(u_1,u_2)(r)=r^{-1}$. We now assume $r_I\gg1$ and attempt to construct the leading-order bounded solution of~\eqref{eq:slow_+_ell} on the interval $[0,r_I]$ as $r_I\to\infty$. We split this interval into $[0,r_I]=[0,r_I-R]\cup[r_I-R,r_I]$, where $r_I\gg R\gg1$. Since the vector field for the existence problem on $\mathcal{M}^+_0$ for large $r$ is given by~\eqref{eq:slow_r_red_infty}, by taking $R$ sufficiently large, we can arrange for $(u_+,u_+')(r_I-R)=(U_2,0)+\mathcal{O}(e^{-\nu R})$ for some $\nu>0$ fixed independent of $R$. Thus on the interval $[0,r_I-R]$~\eqref{eq:slow_+_ell} is approximately
\begin{align}
\begin{split}
0&=u_{rr}+\frac{1}{r}u_r-\frac{\ell^2}{r^2}u-\kappa_0u,
\end{split}
\end{align}
where
\begin{align}
    \kappa_0 = 1+\frac{v_+(U_2)^2(m+U_2v_+(U_2)^2)}{m-U_2(2v_+(U_2)-3bv_+(U_2)^2)},
\end{align}
which admits a unique bounded solution as $r\to0$ given by the modified Bessel function $I_\ell(\sqrt{\kappa_0}r)$. Note that since $r_I\gg R$, by asymptotic properties of Bessel functions $I_\ell(\sqrt{\kappa_0}(r_I-R))\sim \frac{e^{\sqrt{\kappa_0}(r_I-R)}}{\sqrt{2\pi r_I}}$.

On the interval $[r_I-R,r_I]$, under the assumption $\frac{\ell^2-1}{r^2}=\frac{\ell^2-1}{r_I^2}\left(1+\mathcal{O}\left(\frac{R^2}{r_I^2}\right)\right)\ll1$, we expand the bounded solution of~\eqref{eq:slow_+_ell} as $u^*_\ell =u_1(r)+\frac{\ell^2-1}{r_I^2}\tilde{u}(r)$ where $\tilde{u}$ satisfies
\begin{align}
\begin{split}\label{eq:slow_+_ell_pm1_tilde}
0&=\tilde{u}_{rr}+\frac{1}{r}\tilde{u}_r-\frac{1}{r^2}\tilde{u}-\tilde{u}-\frac{v_+(r)^2(m+u_+(r)v_+(r)^2)}{m-u_+(r)(2v_+(r)-3bv_+(r)^2)}\tilde{u} = u_1
\end{split}
\end{align}
to leading order in $\frac{\ell^2-1}{r_I^2}$. We write the solution of this system using variation of constants as
\begin{align}
    \tilde{u}(r) = C_1u_1(r)+C_2u_2(r)-u_1(r)\int_{r_I}^rsu_1(s)u_2(s)\mathrm{d}s+u_2(r)\int_{r_I-R}^rsu_1(s)^2\mathrm{d}s.
\end{align}
Recalling $u^*_\ell(r) =u_1(r)+\frac{\ell^2-1}{r_I^2}\tilde{u}(r)$, 
in order to construct a bounded solution as $r\to0$, we must have $(u,u')(r)\approx (C_3I_\ell, C_3 I_\ell')(\kappa_0r)$ at $r=r_I-R$ for some constant $C_3$ in order to match with the bounded solution $\mathcal{I}_\ell$ on the interval $[0,r_I-R]$. Using the fact that $u_1(r_I-R)$ decays exponentially in $R$ as $R\to \infty$, while $u_2(r_I-R)$ grows exponentially as $R\to\infty$ and $I_\ell(\sqrt{\kappa_0}(r_I-R)) \sim e^{-\sqrt{\kappa_0}R}I_\ell(\sqrt{\kappa_0}r_I)$ for $1\ll R\ll r_I$, we see that we must choose $C_2\approx 0$ in order to ensure $u(r)$ remains bounded as $r\to0$. Thus we obtain the solution 
\begin{align}\label{eq:u_star_ell}
    u^*_\ell(r)\sim u_1(r)-\frac{(\ell^2-1)}{r_I^2}u_1(r)\int_{r_I}^rsu_1(s)u_2(s)\mathrm{d}s+\frac{(\ell^2-1)}{r_I^2}u_2(r)\int_{r_I-R}^rsu_1(s)^2\mathrm{d}s.
\end{align}

It remains to determine the coefficient~\eqref{eq:lambda_1_ell}. From~\eqref{eq:u_star_ell}, we have that
\begin{align*}
    u^*_\ell(r_I)&\sim u_1(r_I)=u_+'(r_I)\\
    (u^*_\ell)'(r_I)&\sim u_1'(r_I)+\frac{(\ell^2-1)}{r_I^2}u_2'(r_I)\int_{r_I-R}^{r_I}s u_1(s)^2\mathrm{d}s\\
    &= u_+''(r_I)+\frac{(\ell^2-1)}{r_I^3u_+'(r_I)}\int_{r_I-R}^{r_I}s u_+'(s)^2\mathrm{d}s\\
    &\sim u_+''(r_I)+\frac{(\ell^2-1)}{r_I^2u_+'(r_I)}\int_{r_I-R}^{r_I} u_+'(s)^2\mathrm{d}s.
\end{align*}
 From this, we find that 
\begin{align*}
    &\left(K_\ell(r_I)(u^*_\ell)'(r_I)-K_\ell'(r_I)u^*_\ell(r_I)\right)\sim K_\ell(r_I)\left(u_+''(r_I)+\frac{(\ell^2-1)}{r_I^2u_+'(r_I)}\int_{r_I-R}^{r_I} u_+'(s)^2\mathrm{d}s\right)-K_\ell'(r_I)u_+'(r_I)\\
    &=K_\ell(r_I)\left(u_\mathrm{f}-a+u_\mathrm{f}v_+(u_\mathrm{f})^2+\frac{(\ell^2-1)}{r_I^2u_+'(r_I)}\int_{r_I-R}^{r_I} u_+'(s)^2\mathrm{d}s\right) -u_+'(r_I)\left(K_\ell'(r_I)+\frac{K_\ell(r_I)}{r_I}\right)\\
    &=K_\ell(r_I)u_\mathrm{f}v_+(u_\mathrm{f})^2+K_\ell(r_I)\frac{(\ell^2-1)}{r_I^2u_+'(r_I)}\int_{r_I-R}^{r_I} u_+'(s)^2\mathrm{d}s+ \frac{\left(u_\mathrm{f}-a\right)}{K_0(r_I)}\left(K_\ell(r_I)K_0(r_I) +K_1(r_I)\left(K_\ell'(r_I)+\frac{K_\ell(r_I)}{r_I}\right)\right)\\
    &=K_\ell(r_I)u_\mathrm{f}v_+(u_\mathrm{f})^2+K_\ell(r_I)\frac{(\ell^2-1)}{r_I^2u_+'(r_I)}\int_{r_I-R}^{r_I} u_+'(s)^2\mathrm{d}s+ \frac{\left(u_\mathrm{f}-a\right)}{K_0(r_I)}\left( K_1(r_I)K_\ell'(r_I)-K_\ell(r_I)K_1'(r_I)\right)\\
    &\sim K_\ell(r_I)u_\mathrm{f}v_+(u_\mathrm{f})^2+\frac{(\ell^2-1)}{r_I^2}\left(\frac{K_\ell(r_I)}{u_+'(r_I)}\int_{r_I-R}^{r_I} u_+'(s)^2\mathrm{d}s+\frac{\left(a-u_\mathrm{f}\right)\pi}{4r_IK_0(r_I)}e^{-2r_I}\right),
\end{align*}
where we used the recurrence formulas~\cite[\S 10.29(i)]{handbook} and asymptotic relations~\cite[\S 10.40(i)]{handbook} for modified Bessel functions. Therefore, we obtain
\begin{align}\label{eq:lambda_1_ell_asymp}
\begin{split}
\lambda_1(\ell) &\sim \frac{\ell^2-1}{r_I^2u_\mathrm{f}v_+(u_\mathrm{f})^2} \left(\int_{r_I-R}^{r_I}u_+'(s)^2\mathrm{d}s+\frac{(u_\mathrm{f}-a)^2}{2}\right)\frac{ -\int_{-\infty}^\infty v_\mathrm{vd}^2\left(1-bv_\mathrm{vd}\right)v_\mathrm{vd}' \mathrm{d}s}{\int_{-\infty}^\infty v_\mathrm{vd}'^2\mathrm{d}s}\\
&\sim \frac{\ell^2-1}{r_I^2}\frac{1}{u_\mathrm{f}v_+(u_\mathrm{f})^2} \left(\int_{-\infty}^{0}u_{+,\infty}'(s)^2\mathrm{d}s+\frac{(u_\mathrm{f}-a)^2}{2}\right)\frac{ -\int_{-\infty}^\infty v_\mathrm{vd}^2\left(1-bv_\mathrm{vd}\right)v_\mathrm{vd}' \mathrm{d}s}{\int_{-\infty}^\infty v_\mathrm{vd}'^2\mathrm{d}s},
\end{split}
\end{align}
where we again use asymptotic properties of modified Bessel functions, and the fact that on the interval $r\in[r_I-R,r_I]$, the solution $u_+(r)$ is approximately $u_+(r)\sim u_{+, \infty}(r-r_I)$, where $u_{+, \infty}(s)$ is the solution for the reduced flow on $\mathcal{M}^+_0$ which forms part of the singular stationary front solution in the limit $r_I\to\infty$, which satisfies $u_{+, \infty}(0) = u_\mathrm{f}$. Returning to~\eqref{eq:front_lambda2c_est}, and noting that 
\begin{align}
    u_{0,\infty}'(\xi) = (a-u_\mathrm{f})e^{\xi}
\end{align}
in the case of a stationary front $c_\mathrm{vd}=0$, we see that for large $r_I\gg1$, we recover the coefficient~\eqref{eq:front_lambda2c_est} for $\lambda_1(\ell)$ with the prefactor $\frac{\ell^2-1}{r_I^2}$.

\subsection{Point spectrum for $|\ell|\gg1$}\label{sec:largel}
When $|\ell|\gg1$ (but $\mathcal{O}(1)$ with respect to $\delta$), we can obtain asymptotic approximations for $K_\ell$ and $u^*_\ell$ in~\eqref{eq:lambda_1_ell}. For the former, we can employ standard asymptotic results for modified Bessel functions~\cite{handbook}. For the latter, we consider~\eqref{eq:slow_+_ell} in the limit of large $|\ell|$. Considering~\eqref{eq:lambda_1_ell} and rearranging the term involving $K_\ell$ and $u^*_\ell$, we see that 
\begin{align}\label{eq:lambda_1_ell_re}
\lambda_1(\ell) &=  \left(\frac{(a-u_\mathrm{f})K_1(r_I)}{K_0(r_I)}-\frac{u_\mathrm{f}v_+(u_\mathrm{f})^2}{\frac{(u^*_\ell)'(r_I)}{u^*_\ell(r_I)}-\frac{K_\ell'(r_I)}{K_\ell(r_I)}}\right)\frac{ -\int_{-\infty}^\infty v_\mathrm{vd}^2\left(1-bv_\mathrm{vd}\right)v_\mathrm{vd}' \mathrm{d}s}{\int_{-\infty}^\infty v_\mathrm{vd}'^2\mathrm{d}s}.
\end{align}
Defining $w^*_\ell(r)=\frac{r}{|\ell|}\frac{(u^*_\ell)'(r)}{u^*_\ell(r)}$, and using~\eqref{eq:slow_+_ell}, we have that $w=w^*_\ell(r)$ satisfies the equation
\begin{align}
    w_r =\frac{|\ell|}{r}\left(1-w^2\right)+\frac{r}{|\ell|}\left(1+f_+(r)\right),
\end{align}
where
\begin{align}
    f_+(r) = \frac{v_+(r)^2(m+u_+(r)v_+(r)^2)}{m-u_+(r)(2v_+(r)-3bv_+(r)^2)}.
\end{align}
Appending an equation for $r$ and rescaling the spatial coordinate, we have the equivalent autonomous system
\begin{align}
\begin{split}
    w' &=\left(1-w^2\right)+\frac{r^2}{\ell^2}\left(1+f_+(r)\right)\\
    r'&=\frac{r}{|\ell|},
    \end{split}
\end{align}
which, for large $|\ell|\gg1$, admits two invariant manifolds which are defined up to $r=0$, given by $w_\pm(r;\ell)=\pm1+\mathcal{O}(|\ell|^{-1})$. The manifold $w=1$ is attracting while $w=-1$ is repelling. For large $|\ell|$, in order for $u^*_\ell(r)$ to be bounded as $r\to0$, the solution $w^*_\ell(r)$ must lie on the manifold $w_+(r;\ell)$, and hence $w^*_\ell(r)\sim 1$ for large $|\ell|$. Therefore, $\frac{(u^*_\ell)'(r)}{u^*_\ell(r)}\sim \frac{|\ell|}{r}$
as $|\ell|\to\infty$. A similar argument (or using the asymptotic expressions in~\cite[\S10.41]{handbook}) shows that $\frac{K_\ell'(r)}{K_\ell(r)}\sim -\frac{|\ell|}{r}$. Finally, returning to~\eqref{eq:lambda_1_ell_re}, we have that
\begin{align*}
\lambda_1(\ell) &=  \left(\frac{(a-u_\mathrm{f})K_1(r_I)}{K_0(r_I)}-\frac{u_\mathrm{f}v_+(u_\mathrm{f})^2r_I}{2|\ell|}\right)\frac{ -\int_{-\infty}^\infty v_\mathrm{vd}^2\left(1-bv_\mathrm{vd}\right)v_\mathrm{vd}' \mathrm{d}s}{\int_{-\infty}^\infty v_\mathrm{vd}'^2\mathrm{d}s}\\
&\sim \left(\frac{(a-u_\mathrm{f})K_1(r_I)}{K_0(r_I)}\right)\frac{ -\int_{-\infty}^\infty v_\mathrm{vd}^2\left(1-bv_\mathrm{vd}\right)v_\mathrm{vd}' \mathrm{d}s}{\int_{-\infty}^\infty v_\mathrm{vd}'^2\mathrm{d}s}\\
&>0
\end{align*}
for $|\ell|\gg1$ sufficiently large (but $\mathcal{O}(1)$ with respect to $\delta$), so that spots of radius $r_I=\mathcal{O}(1)$ are always unstable. In particular, the spots we have constructed in~\S\ref{sec:existence} are unstable to (suitably) large wave numbers $\ell$. However, for sufficiently small spots ($r_I=o(1)$ with respect to $\delta$), the above argument is no longer valid, and it is not possible to rule out stable spots; see~\S\ref{sec:discussion} for some solutions obtained numerically. However, the regime $r_I=o(1)$ lies outside the scope of the existence analysis in~\S\ref{sec:existence}.

Next, we note that if $\ell = \bar{\ell}\delta^{-1/2}$ for some $0<\bar{\ell}=\mathcal{O}(1)$, a similar analysis as in~\S\ref{sec:point_spectrum} results in the solvability condition
\begin{align}\label{eq:lambda1_solvability_large_ell}
 \lambda_1 &= -\frac{\bar{\ell}^2}{r_I^2}+ \left(\frac{(a-u_\mathrm{f})}{K_0(r_I)}K_1(r_I)\right)\frac{ -\int_{-\infty}^\infty v_\mathrm{vd}^2\left(1-bv_\mathrm{vd}\right)v_\mathrm{vd}' \mathrm{d}s}{\int_{-\infty}^\infty v_\mathrm{vd}'^2\mathrm{d}s}
\end{align}
(in place of~\eqref{eq:lambda_1_ell}) so that spots are always unstable to wavenumbers of the form $\ell = \bar{\ell}\delta^{-1/2}$ where $\bar{\ell}$ is small but $\mathcal{O}(1)$. For larger wavenumbers, the first term in~\eqref{eq:lambda1_solvability_large_ell} dominates so that $\lambda_1(\ell)<0$. Thus a switch from unstable to stable wavenumbers occurs at some critical $\ell = \bar{\ell}\delta^{-1/2}$.

Finally, if $\ell = \frac{\bar{\ell}}{\delta}$ for some $0<\bar{\ell}=\mathcal{O}(1)$, to construct a bounded solution of~\eqref{eq:klaus_stability_problem_fast}, to leading order we must have $u\equiv0$, and the fast equation in~\eqref{eq:klaus_stability_problem_fast} reduces to
\begin{align}
\begin{split}\label{eq:klaus_stability_problem_fast_large_ell}
\lambda v+\frac{\bar{\ell}^2}{r_I^2}v&= v_{ss} -mv+u_\mathrm{f}\left(2v_\mathrm{vd}-3bv_\mathrm{vd}^2\right)v,
\end{split}
\end{align}
which is just the Sturm-Liouville problem~\eqref{eq:klaus_stability_problem_fast_reduced}, but with $\lambda$ shifted by $\frac{\bar{\ell}^2}{r_I^2}$. The problem~\eqref{eq:klaus_stability_problem_fast_reduced} has an eigenvalue at $0$ due to translation invariance of the front $v_\mathrm{vd}$, while any other eigenvalues are bounded away from the imaginary axis. Hence we have that any eigenvalues of~\eqref{eq:klaus_stability_problem_fast_large_ell} lie in the region $\left\{\lambda\in\mathbb{C}:\mathrm{Re} \lambda\leq -\frac{\bar{\ell^2}}{r_I^2}\right\}$. Thus there are no further instabilities of $\lambda(\ell)$ for $|\ell| = \mathcal{O}(1/\delta)$ or larger.

\section{Numerical simulations and discussion}\label{sec:discussion}
The results of~\S\ref{sec:stability} demonstrate that the spot and gap solutions of Theorems~\ref{thm:spot_existence}--\ref{thm:gap_existence} are \emph{unstable} with respect to sufficiently large wave numbers $|\ell|\gg1$ for spots of radius $0<r_I=\mathcal{O}(1)$ with respect to $\delta$. We further showed in~\S\ref{sec:larger1} that spots are unstable to smaller wave numbers in the limit of large radius $r_I\gg1$. The radius can be controlled by choosing parameters closer/further from the hypersurface in $(a,b,m)$ parameter space given by the relation~\eqref{eq:energy_formula}, which simultaneously represents the existence condition for stationary fronts, and the boundary between the existence regions for the spots and gaps. Taking parameters closer to this threshold results in spots/gaps of larger radius, which are therefore better approximated by the corresponding nearby stationary front, with the leading order expression for $\lambda_1(\ell)$ approximated by the asymptotic relation~\eqref{eq:lambda_1_ell_asymp}.

\begin{figure}
\hspace{.1\textwidth}
\begin{subfigure}{.3 \textwidth}
\centering
\includegraphics[width=1\linewidth]{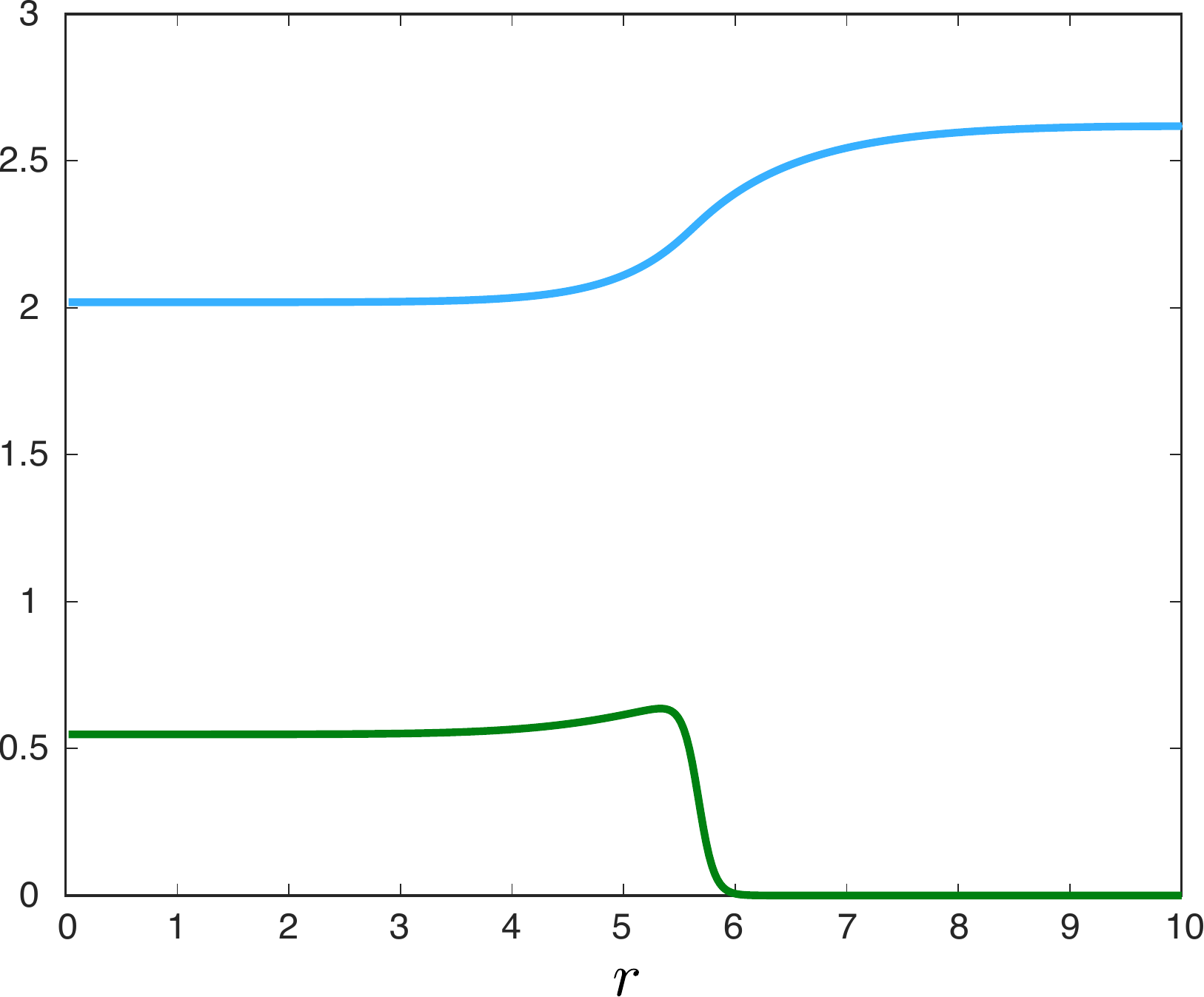}
\end{subfigure}
\hspace{.1\textwidth}
\begin{subfigure}{.3 \textwidth}
\centering
\includegraphics[width=1\linewidth]{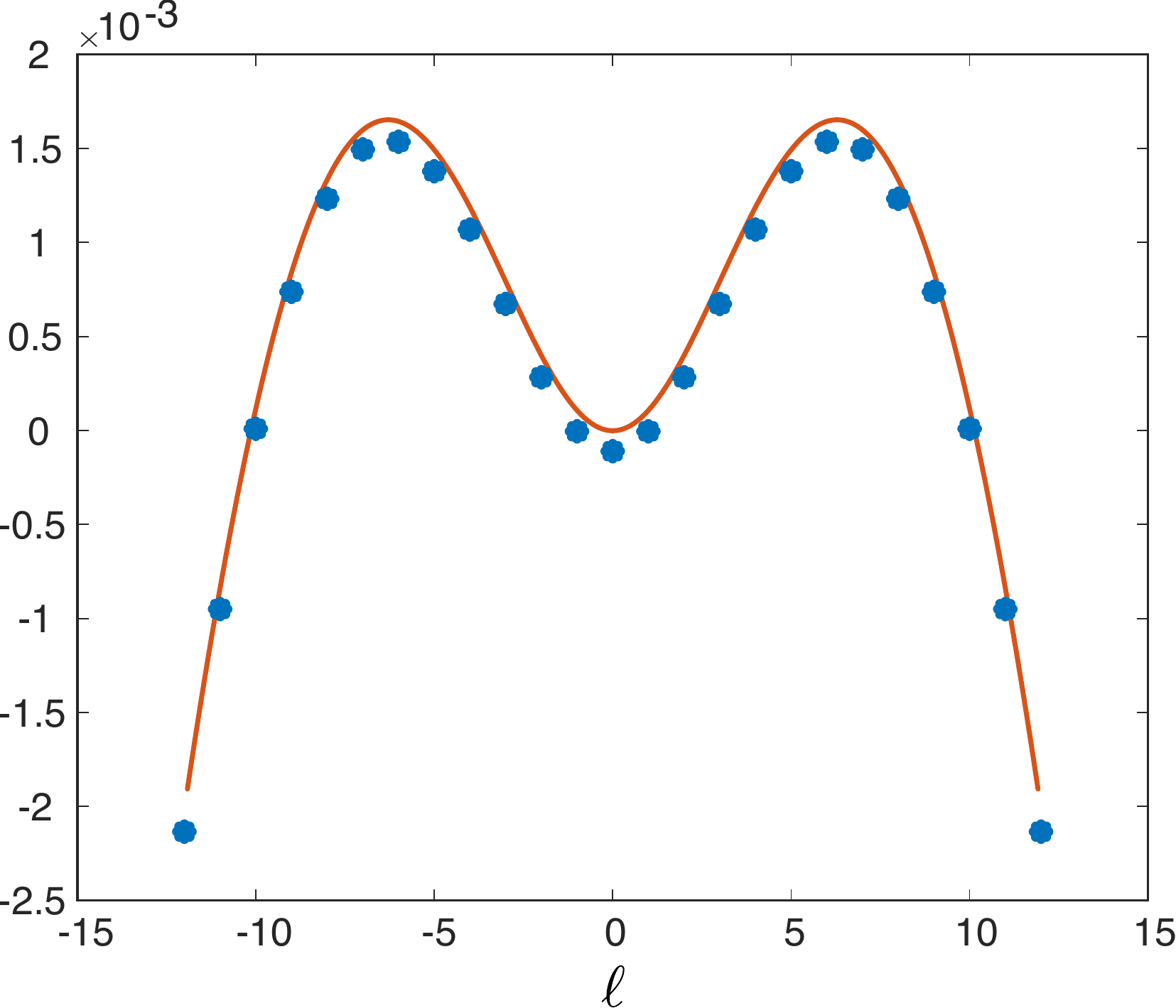}
\end{subfigure}
\hspace{.1\textwidth}\\

\hspace{.1\textwidth}
\begin{subfigure}{.3 \textwidth}
\centering
\includegraphics[width=1\linewidth]{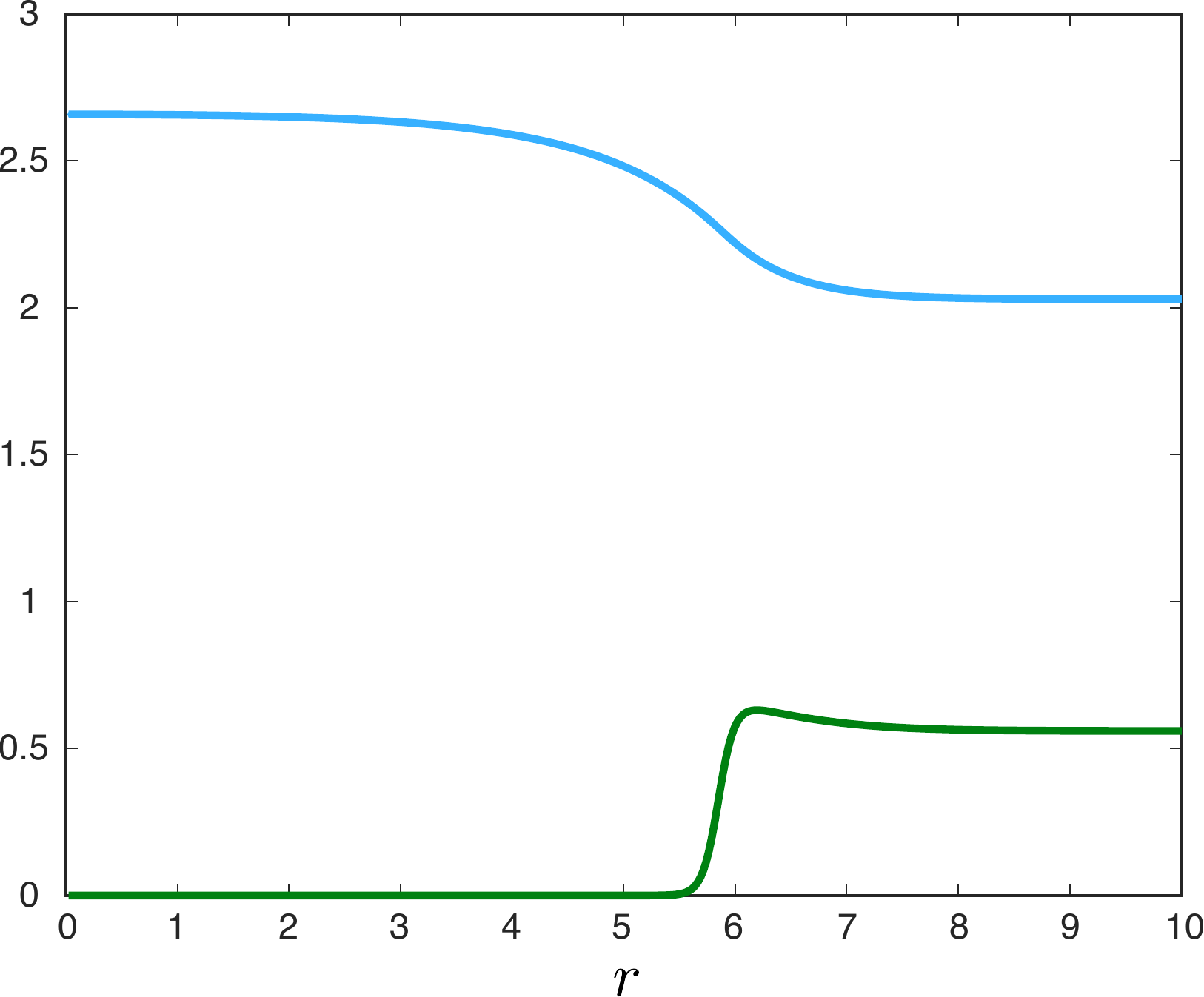}
\end{subfigure}
\hspace{.1\textwidth}
\begin{subfigure}{.3 \textwidth}
\centering
\includegraphics[width=1\linewidth]{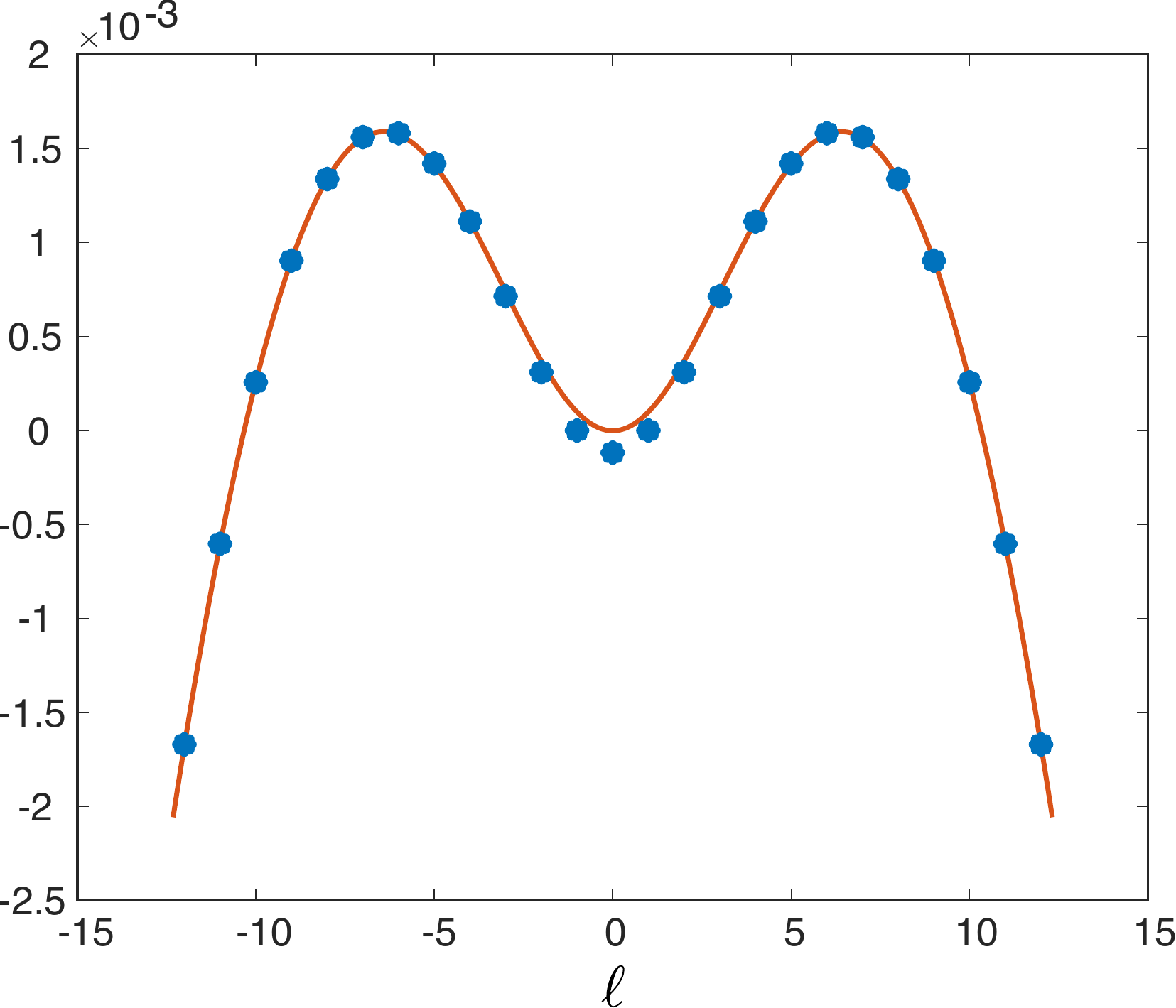}
\end{subfigure}
\hspace{.1\textwidth}
\caption{Stationary radial profiles obtained in~\eqref{eq:klaus_stationary} for $b=1.0, m=0.5, \delta = 0.05$ corresponding to a spot solution (top panels, $a=2.625$) and a gap solution (bottom panels, $a=2.665$). The left panels depict the corresponding radial profiles $(u,v)(r)$ ($u$-profile in blue, $v$-profile in green), while the right panels depict the corresponding critical eigenvalues (blue dots) $\lambda(\ell)$ for $-12\leq\ell\leq 12$. Also plotted (red) is the critical eigenvalue curve for a (slowly) traveling front found for the same parameter values: for $a=2.625$ (top), the corresponding front has speed $c=0.012$, while for $a=2.665$ (bottom), the front has speed $c=-0.013$. The radial profiles were obtained by solving the stationary equation~\eqref{eq:klaus_stationary} using Matlab's fsolve routine, where finite differences were employed for the spatial discretization with Neumann boundary conditions. The eigenvalues $\lambda(\ell)$ were obtained by linearizing~\eqref{eq:modifiedKlausmeier} about the radial profile and using Matlab's eigs routine.  }
\label{fig:unstable_spot}
\end{figure}

In Figure~\ref{fig:unstable_spot}, we demonstrate this for particular parameter values $m=0.5, b=1$, and values of $a$ nearby $a\approx2.6369$ which is the value of $a$ satisfying~\eqref{eq:energy_formula} for $(m,b) = (0.5,1)$, thus representing the location of the stationary front from~\S\ref{sec:fronts_stationary} in the limit $\delta\to0$. Figure~\ref{fig:unstable_spot} depicts a radial profile of a spot solution of radius $r_I\approx 5.66$ (the value of $r_I$ is approximated by the location of the inflection point of the $v$-profile of the solution), as well as the corresponding eigenvalues $\lambda(\ell)$ for $-12\leq \ell\leq 12$. We see good agreement when comparing with the curve obtained by numerically continuing the critical eigenvalue~\eqref{eq:lambda_crit_front} under the rescaled wavenumber $\ell\to\ell/r_I$, for a (slowly) traveling front found for the same parameter values.  (The front has a wave speed close to zero, as we are near the parameter values corresponding to the singular stationary front from~\S\ref{sec:fronts_stationary}.) Figure~\ref{fig:unstable_spot} shows similar agreement for the same computations performed for a radial gap solution of radius $r_I\approx 5.85$. We also point out that these spectral computations show agreement with the analysis in~\S\ref{sec:largel}, in that the spots/gaps are unstable for a range of `large' wave numbers (note here that $1/\sqrt{\delta} \approx 4.47$), and that $\lambda(\ell)$ becomes negative for sufficiently large $|\ell|$.

\begin{figure}
\hspace{.01\textwidth}
\begin{subfigure}{.23 \textwidth}
\centering
\includegraphics[width=1\linewidth]{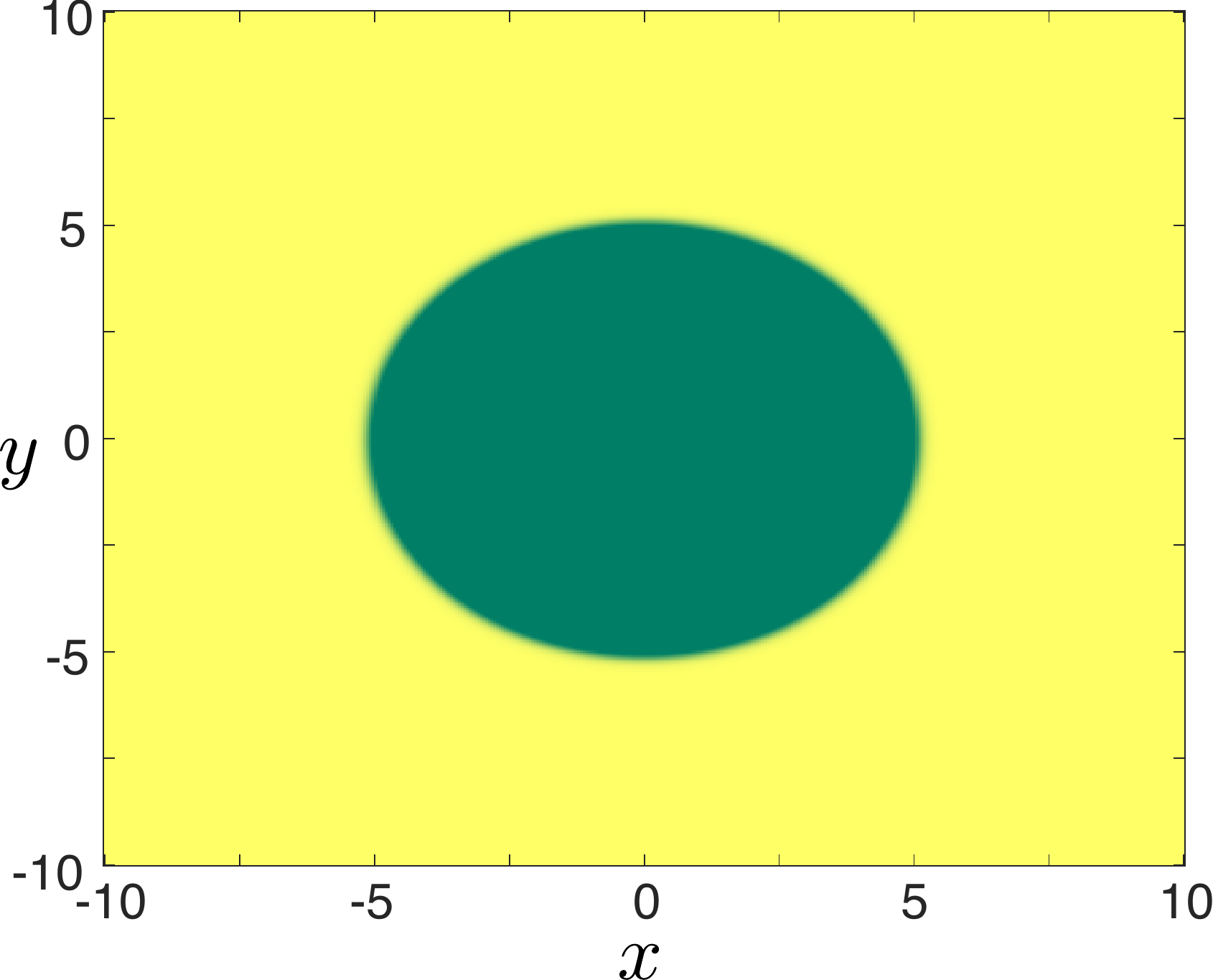}
\end{subfigure}
%\hspace{.05\textwidth}
\begin{subfigure}{.23 \textwidth}
\centering
\includegraphics[width=1\linewidth]{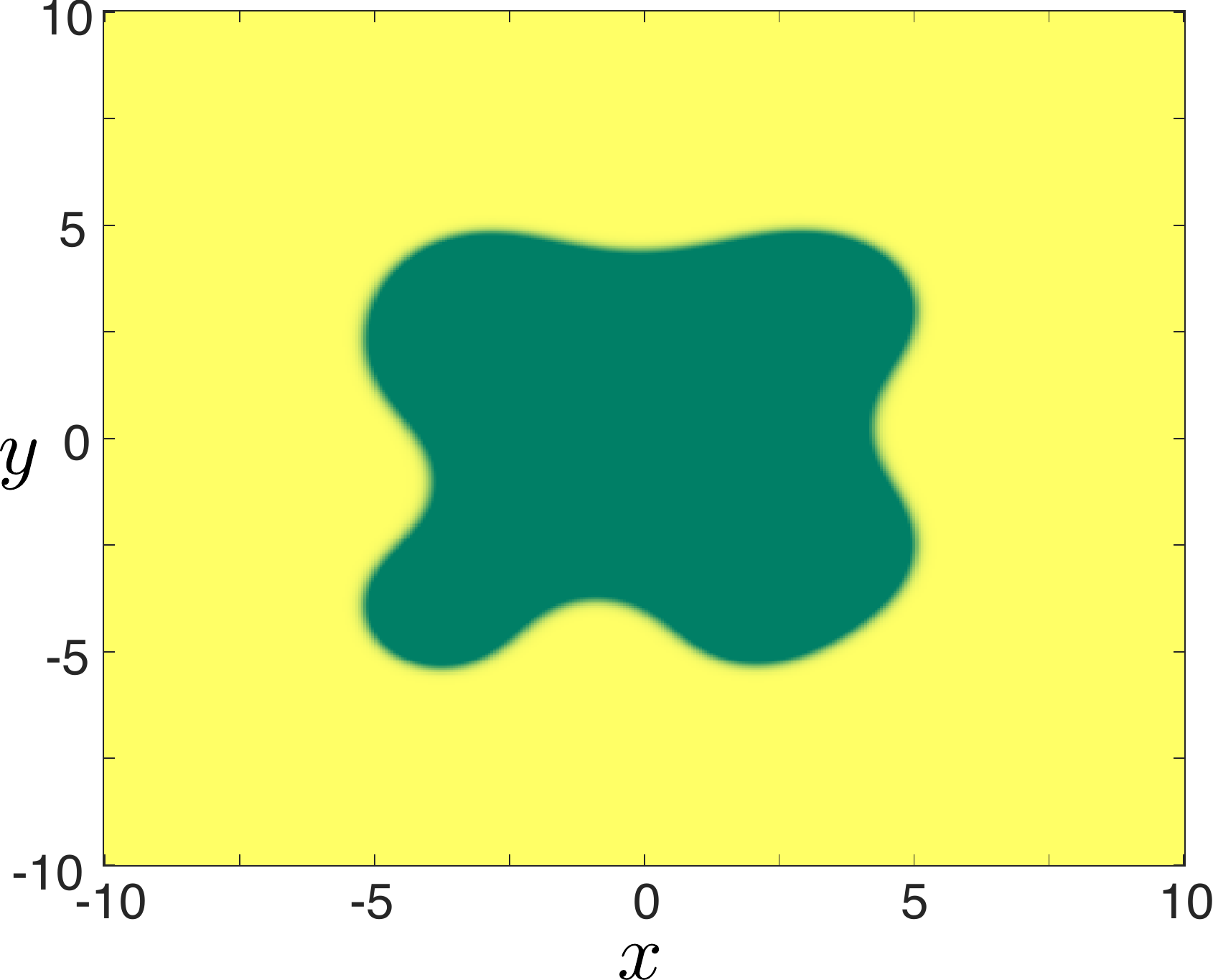}
\end{subfigure}
\begin{subfigure}{.23 \textwidth}
\centering
\includegraphics[width=1\linewidth]{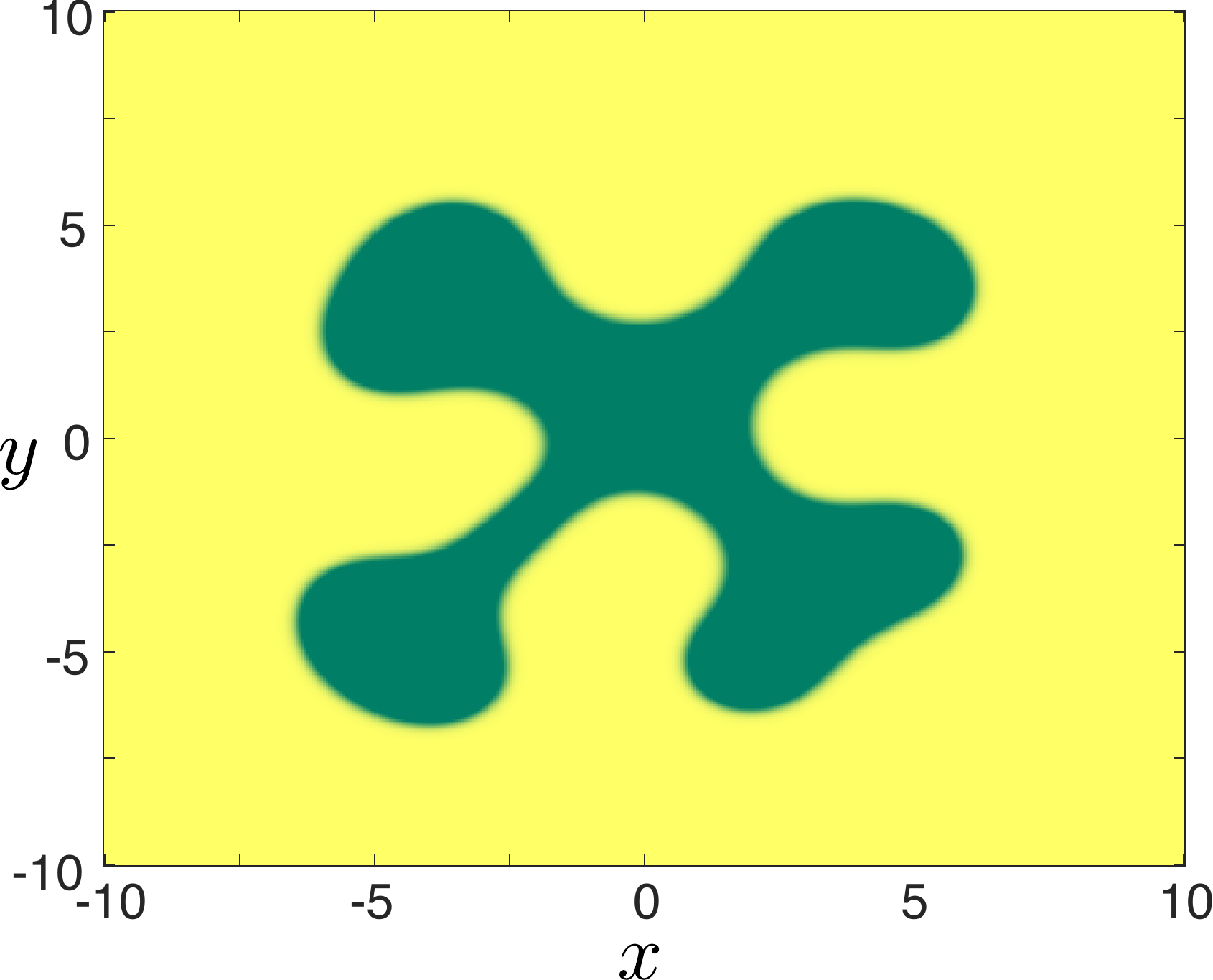}
\end{subfigure}
\begin{subfigure}{.23 \textwidth}
\centering
\includegraphics[width=1\linewidth]{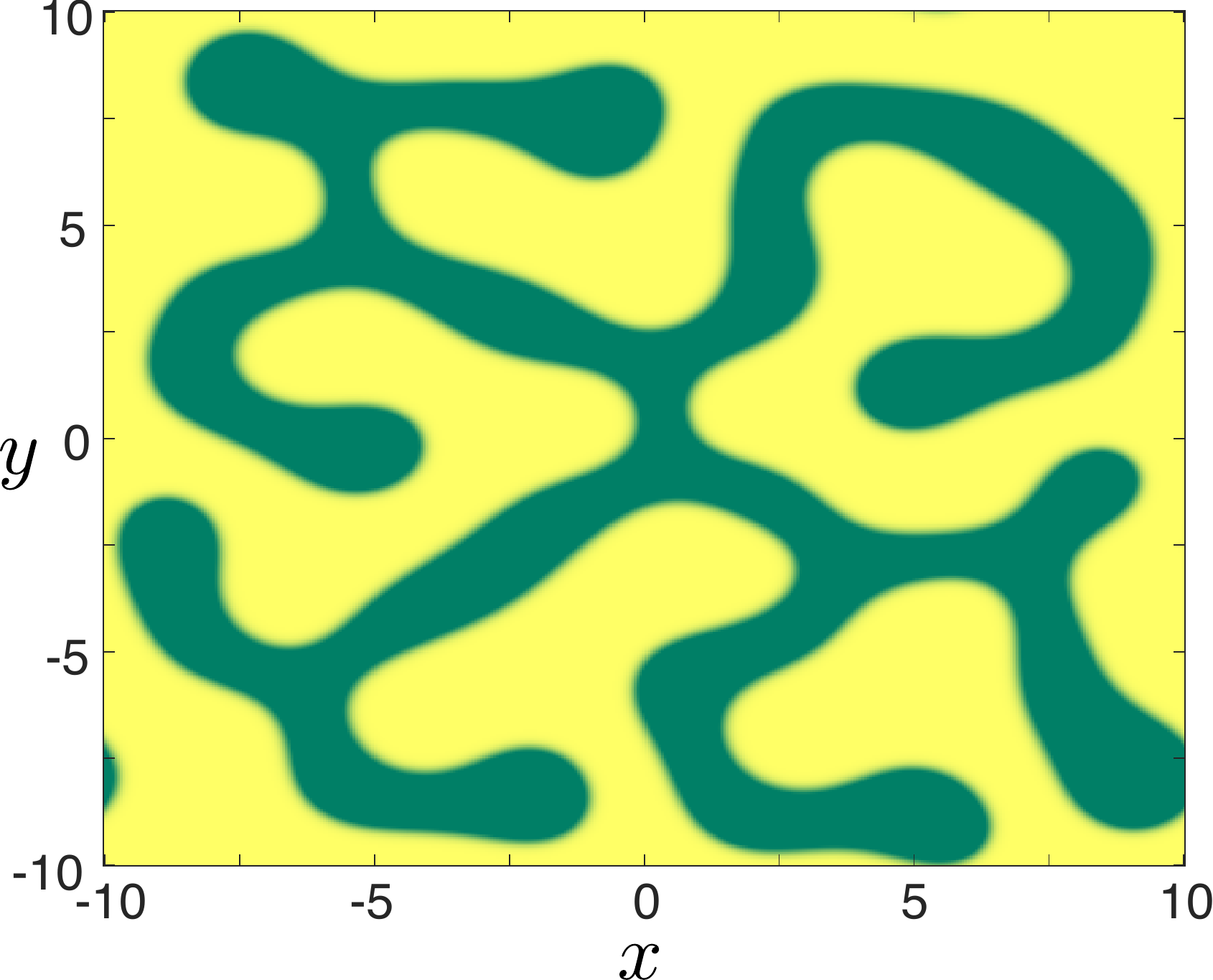}
\end{subfigure}
\hspace{.01\textwidth}\\

\hspace{.01\textwidth}
\begin{subfigure}{.23 \textwidth}
\centering
\includegraphics[width=1\linewidth]{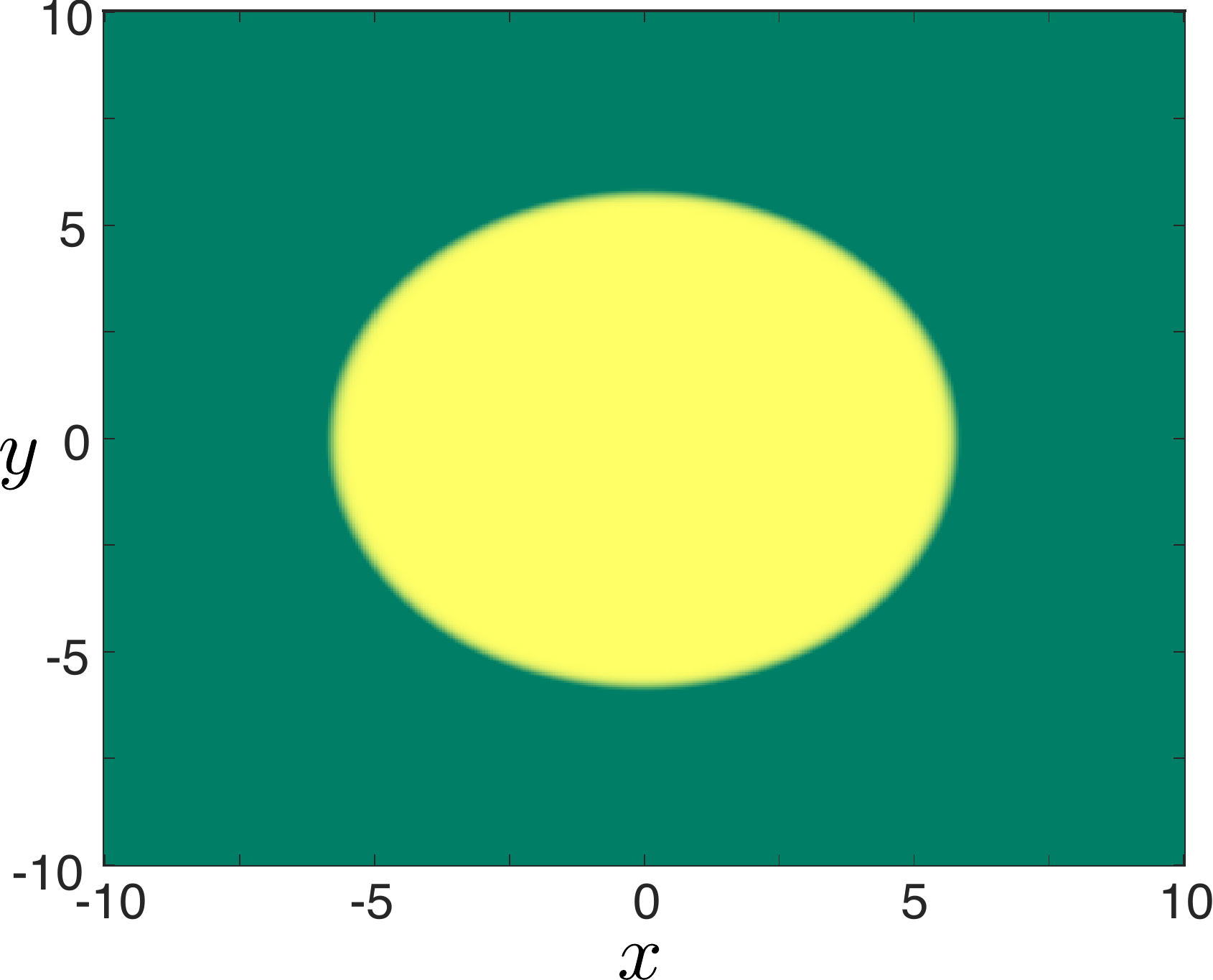}
\end{subfigure}
%\hspace{.05\textwidth}
\begin{subfigure}{.23 \textwidth}
\centering
\includegraphics[width=1\linewidth]{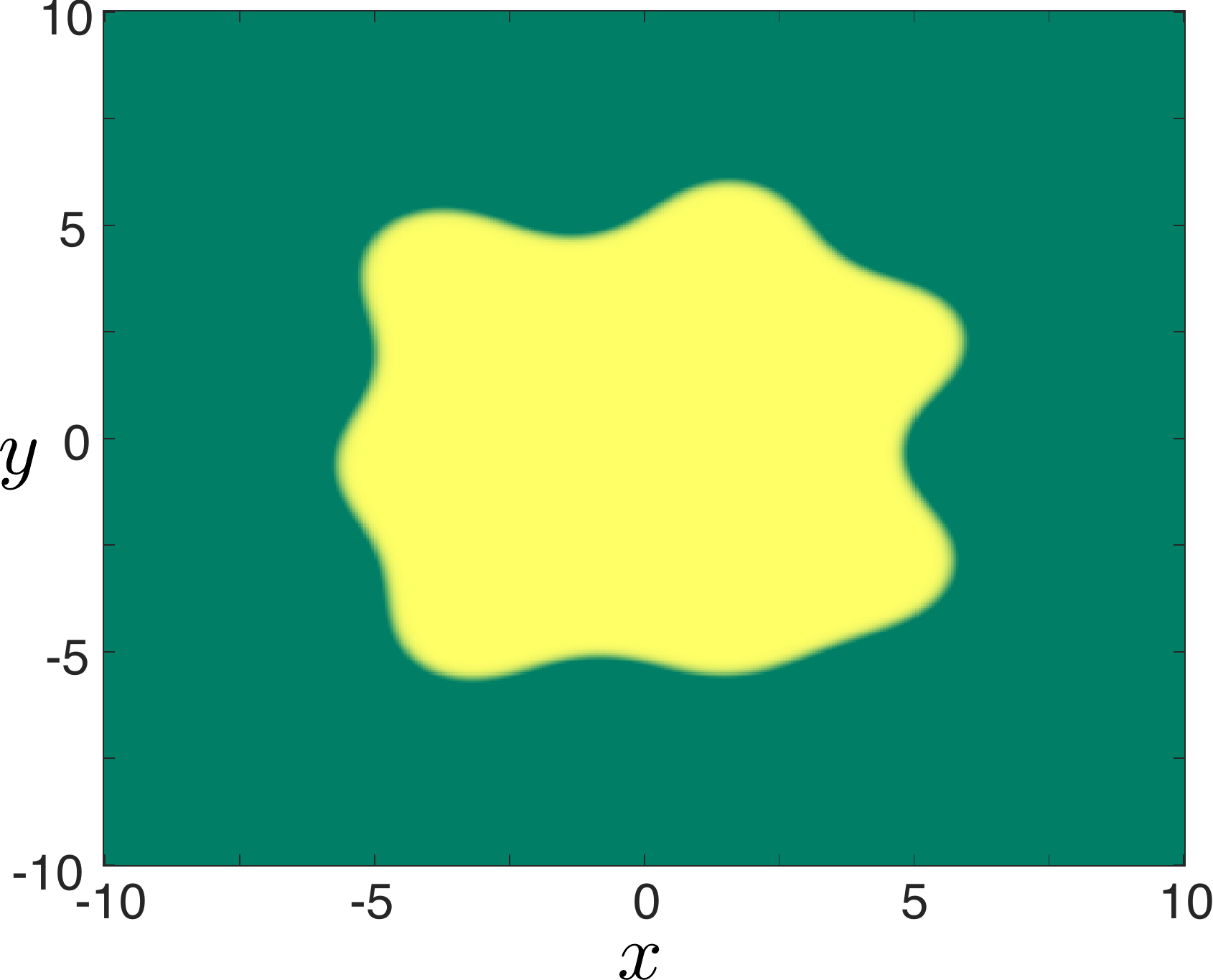}
\end{subfigure}
\begin{subfigure}{.23 \textwidth}
\centering
\includegraphics[width=1\linewidth]{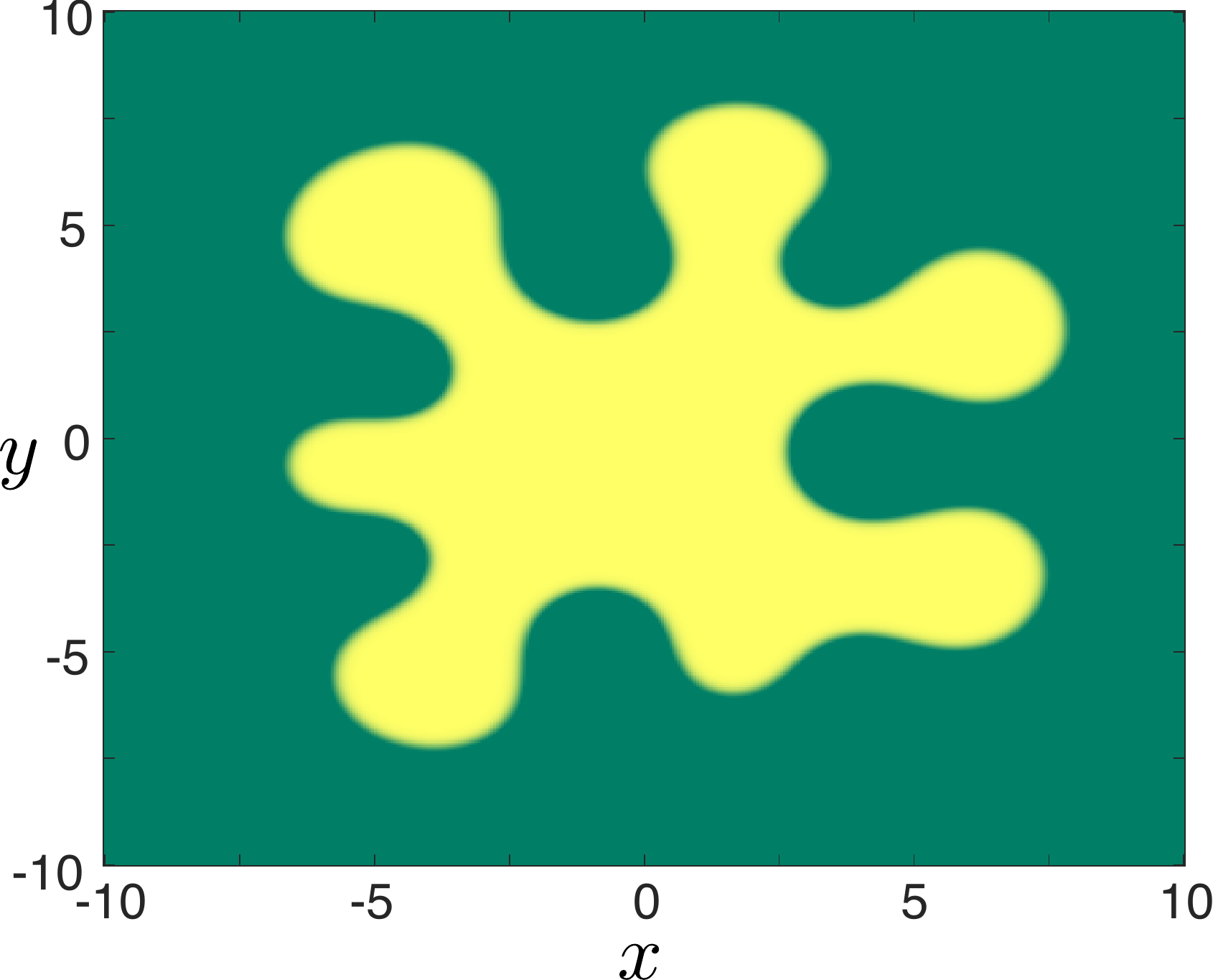}
\end{subfigure}
\begin{subfigure}{.23 \textwidth}
\centering
\includegraphics[width=1\linewidth]{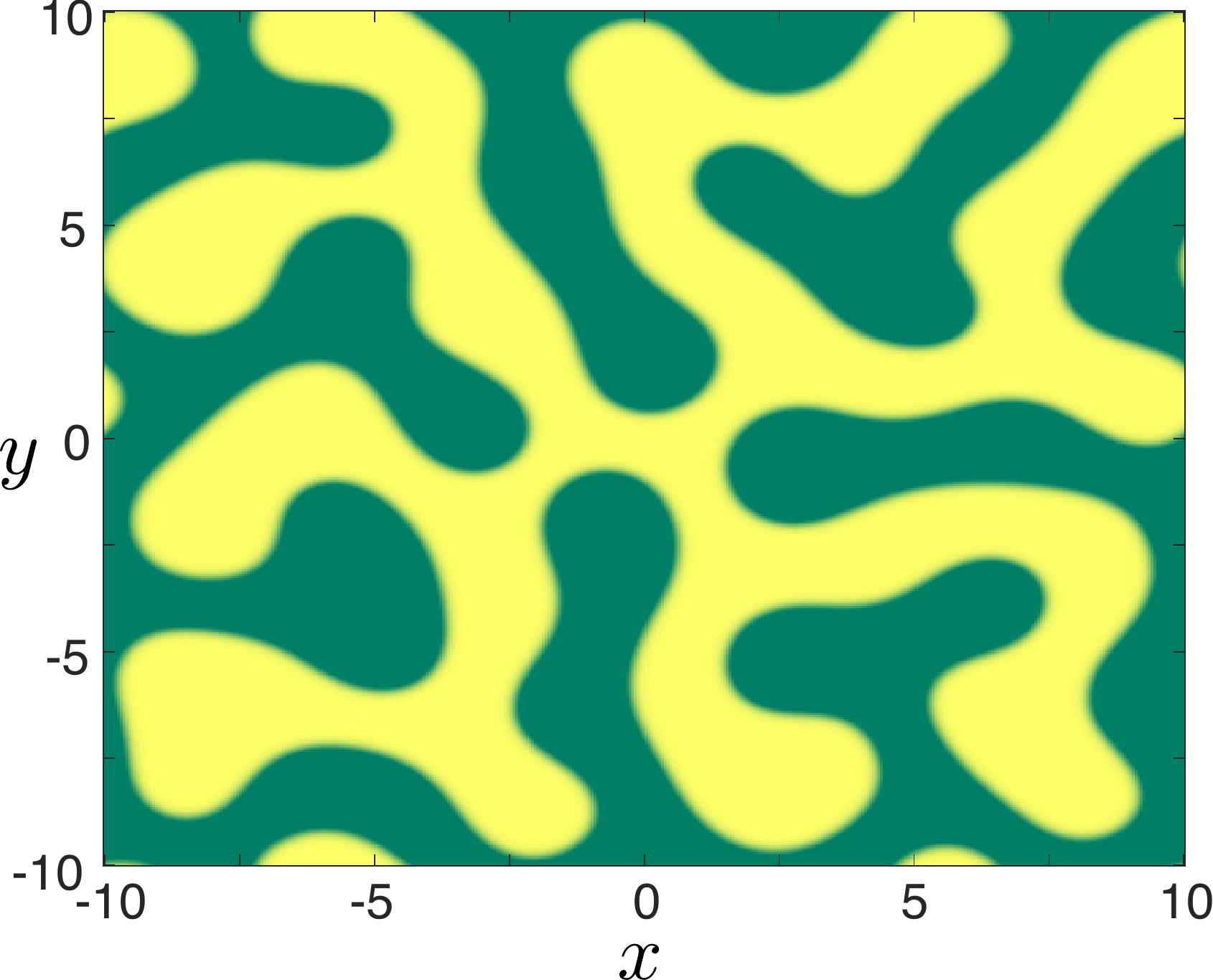}
\end{subfigure}
\hspace{.01\textwidth}
\caption{(Upper panels) Snapshots of direct numerical simulation of a spot solution for $a=2.625, b=1.0, m=0.5, \delta = 0.05$, with initial data given by the solution in Figure~\ref{fig:unstable_spot} (top left panel). The spot develops finger-type patterns along the interface which spread throughout the domain. (Lower panels) Snapshots of direct numerical simulation of gap solution for $a=2.665, b=1.0, m=0.5, \delta = 0.05$, with initial data given by the solution in Figure~\ref{fig:unstable_spot} (bottom left panel), which develops similar finger-type patterns. Simulations were performed in Matlab using finite differences for spatial discretization with periodic boundary conditions, and Matlab's ode15s routine for time integration. }
\label{fig:spot_fingering}
\end{figure}

A natural question concerns the nature of these linear instabilities in the nonlinear dynamics of the spots/gaps. In the large radius limit, we expect such solutions to inherit the sideband instability of the nearby stationary front; in~\cite{CDLOR}, it was demonstrated that this sideband instabilty can lead to the appearance of finger-like patterns along the front interface, which can in turn lead to labyrinthine patterns which expand spatially into the homogeneous states. By performing direct numerical simulations using the unstable spot and gap solutions from Figure~\ref{fig:unstable_spot} as initial data, we see a similar instability manifest along the (circular) interface; see Figure~\ref{fig:spot_fingering} for snapshots of these simulations. We leave a more detailed study of the appearance of such finger-like patterns, and the relation to the corresponding instabilities in the stationary front interface to future work.

\begin{figure}
\hspace{.02\textwidth}
\begin{subfigure}{.3 \textwidth}
\centering
\includegraphics[width=1\linewidth]{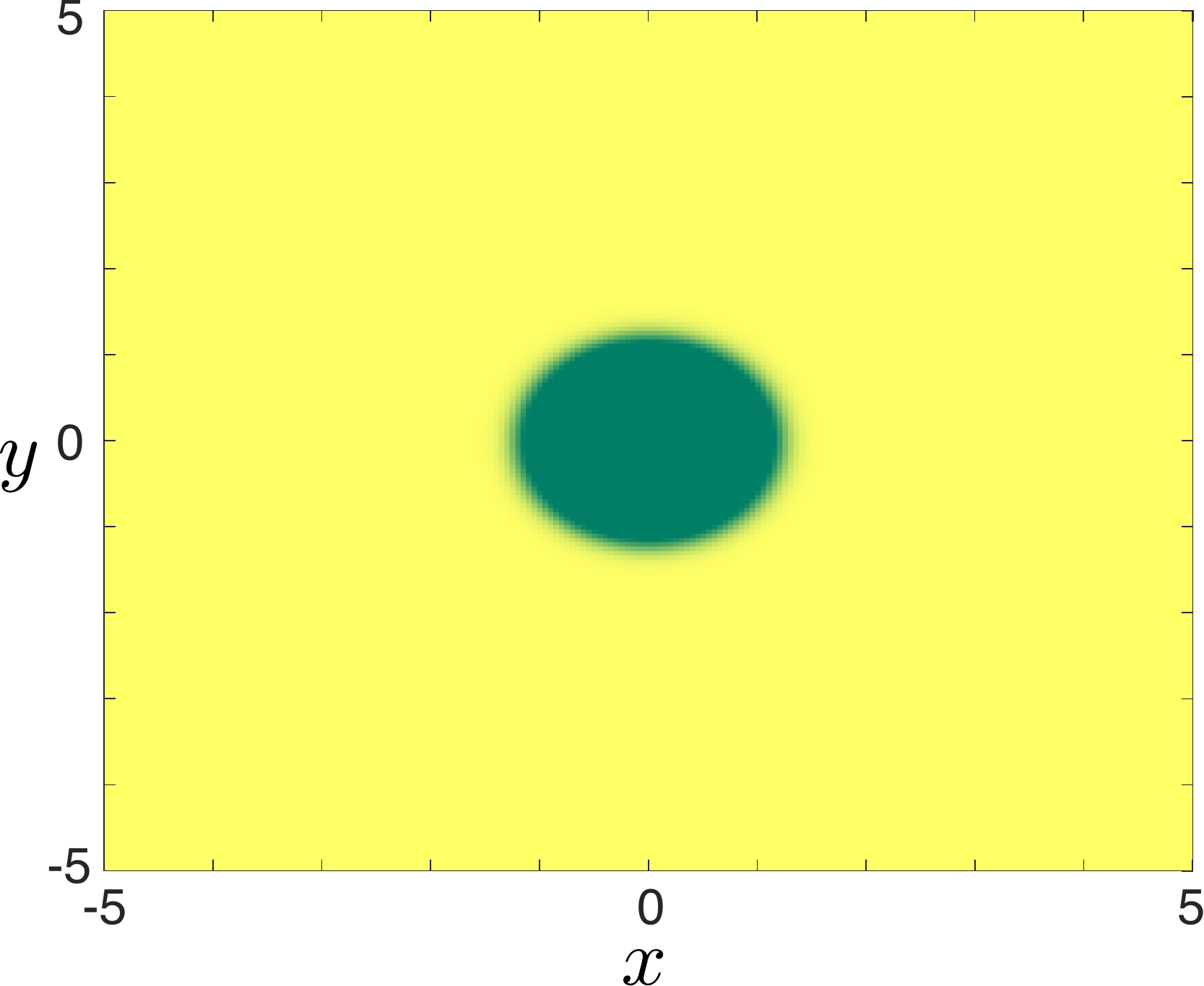}
\end{subfigure}
\hspace{.02\textwidth}
\begin{subfigure}{.3 \textwidth}
\centering
\includegraphics[width=1\linewidth]{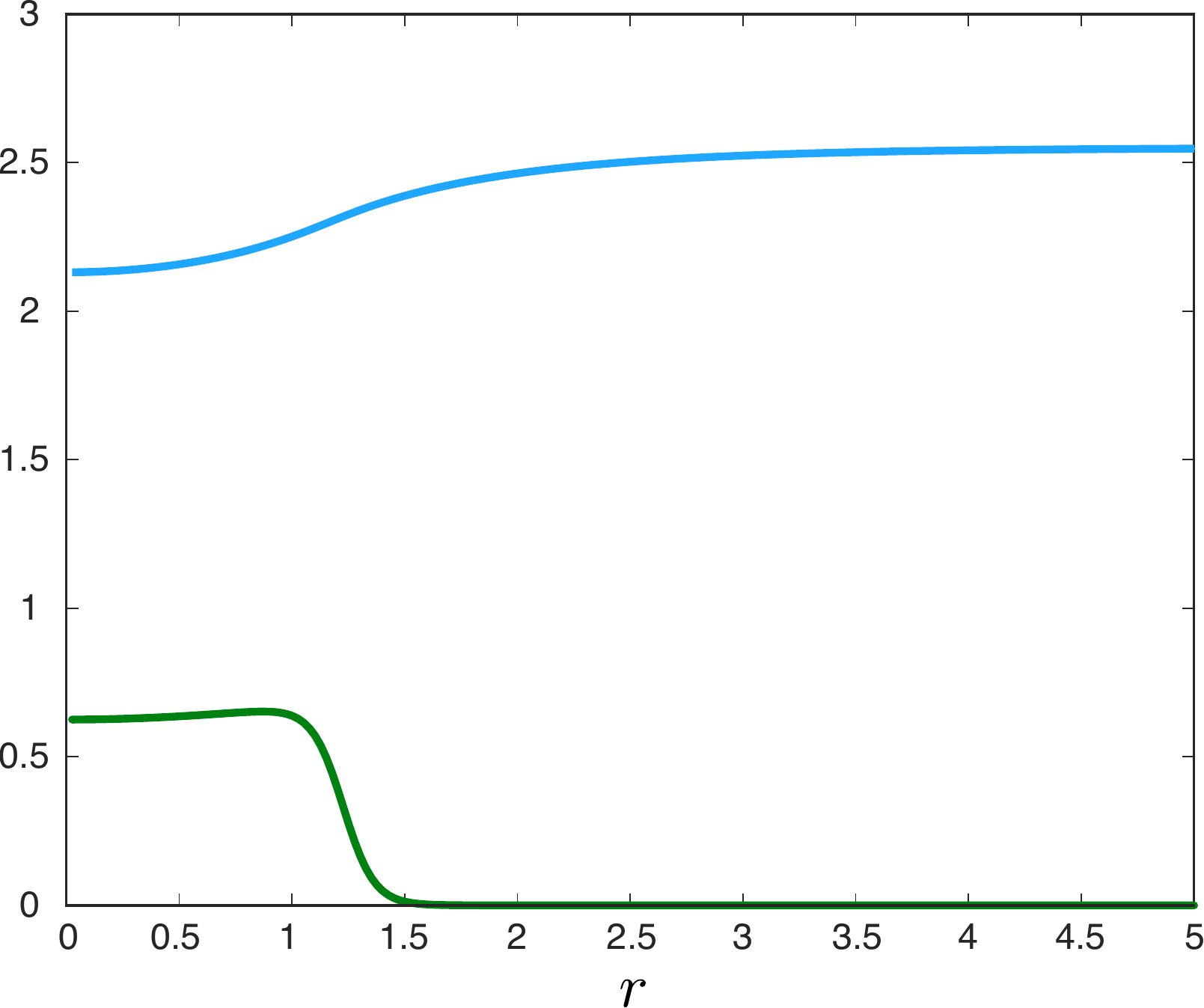}
\end{subfigure}
\hspace{.02\textwidth}
\begin{subfigure}{.3 \textwidth}
\centering
\includegraphics[width=1\linewidth]{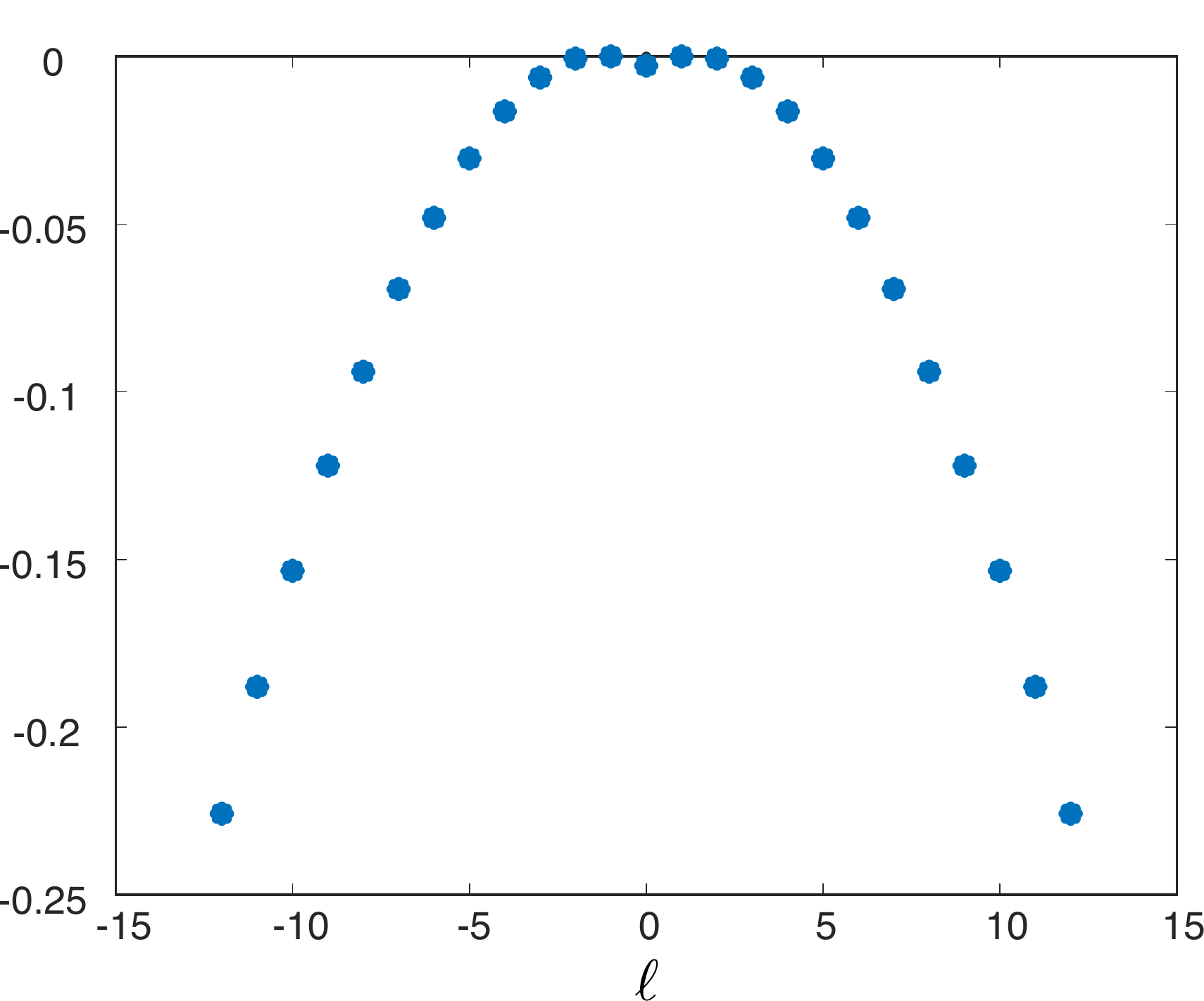}
\end{subfigure}\\

\hspace{.02\textwidth}
\begin{subfigure}{.3 \textwidth}
\centering
\includegraphics[width=1\linewidth]{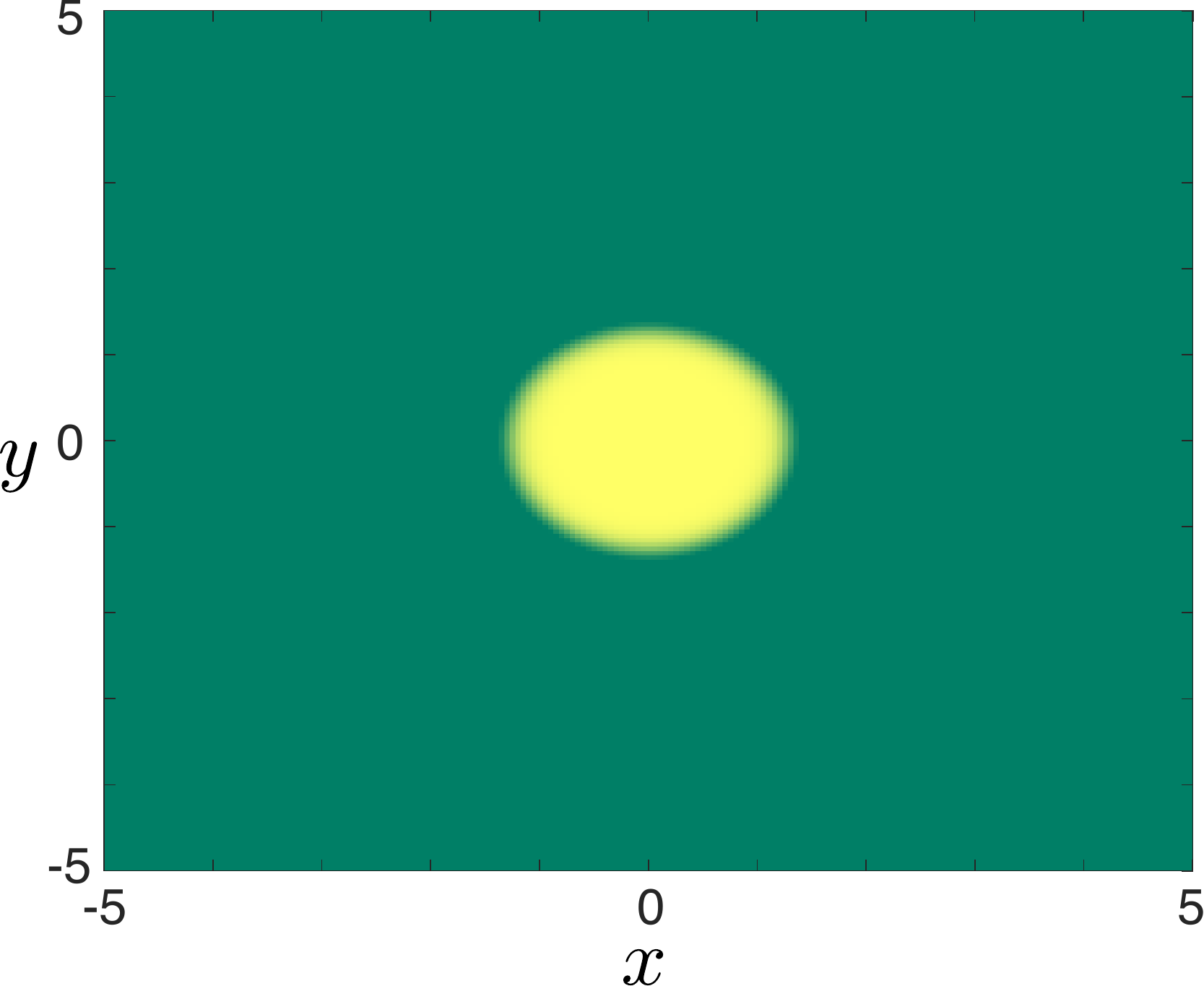}
\end{subfigure}
\hspace{.02\textwidth}
\begin{subfigure}{.3 \textwidth}
\centering
\includegraphics[width=1\linewidth]{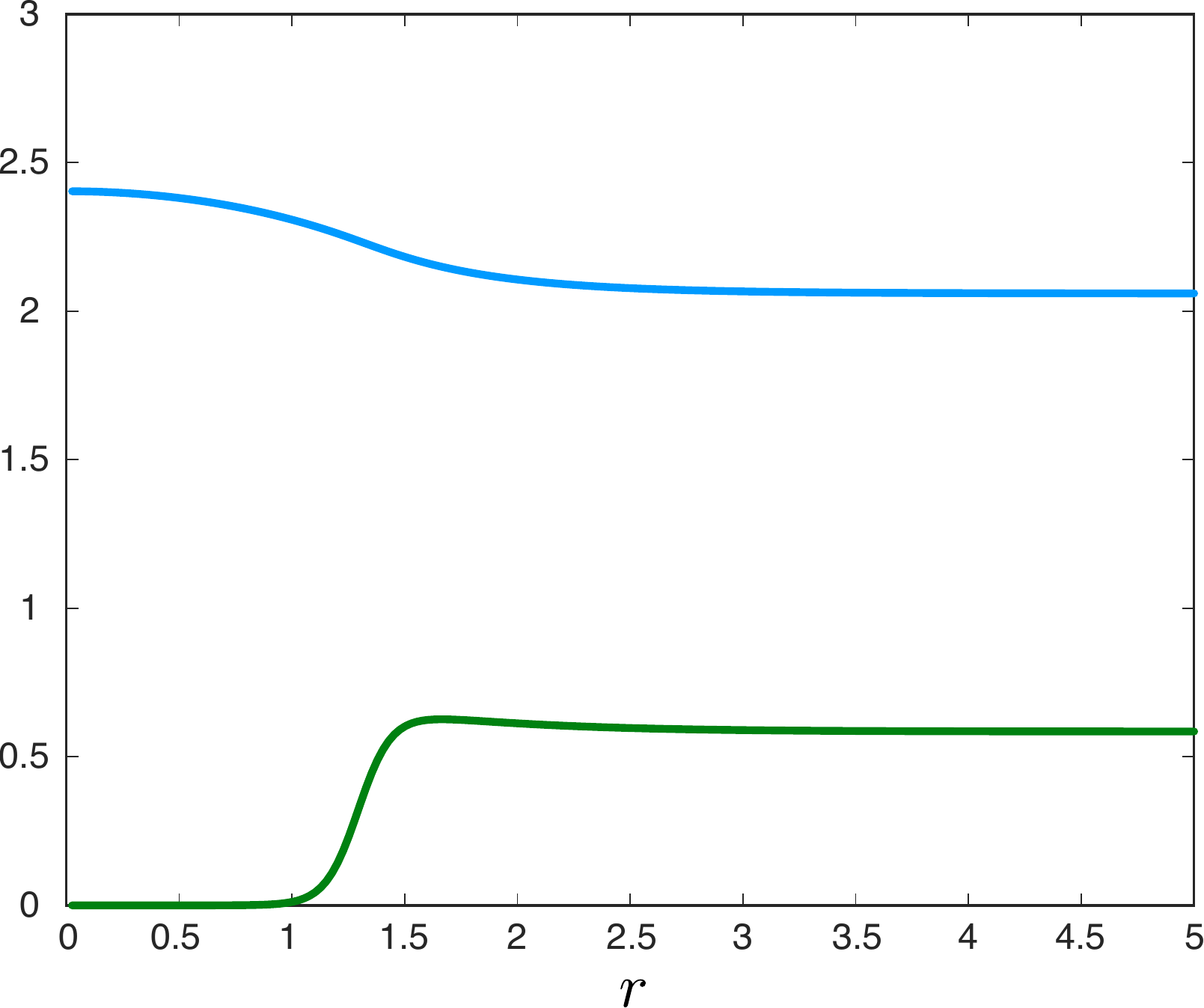}
\end{subfigure}
\hspace{.02\textwidth}
\begin{subfigure}{.3 \textwidth}
\centering
\includegraphics[width=1\linewidth]{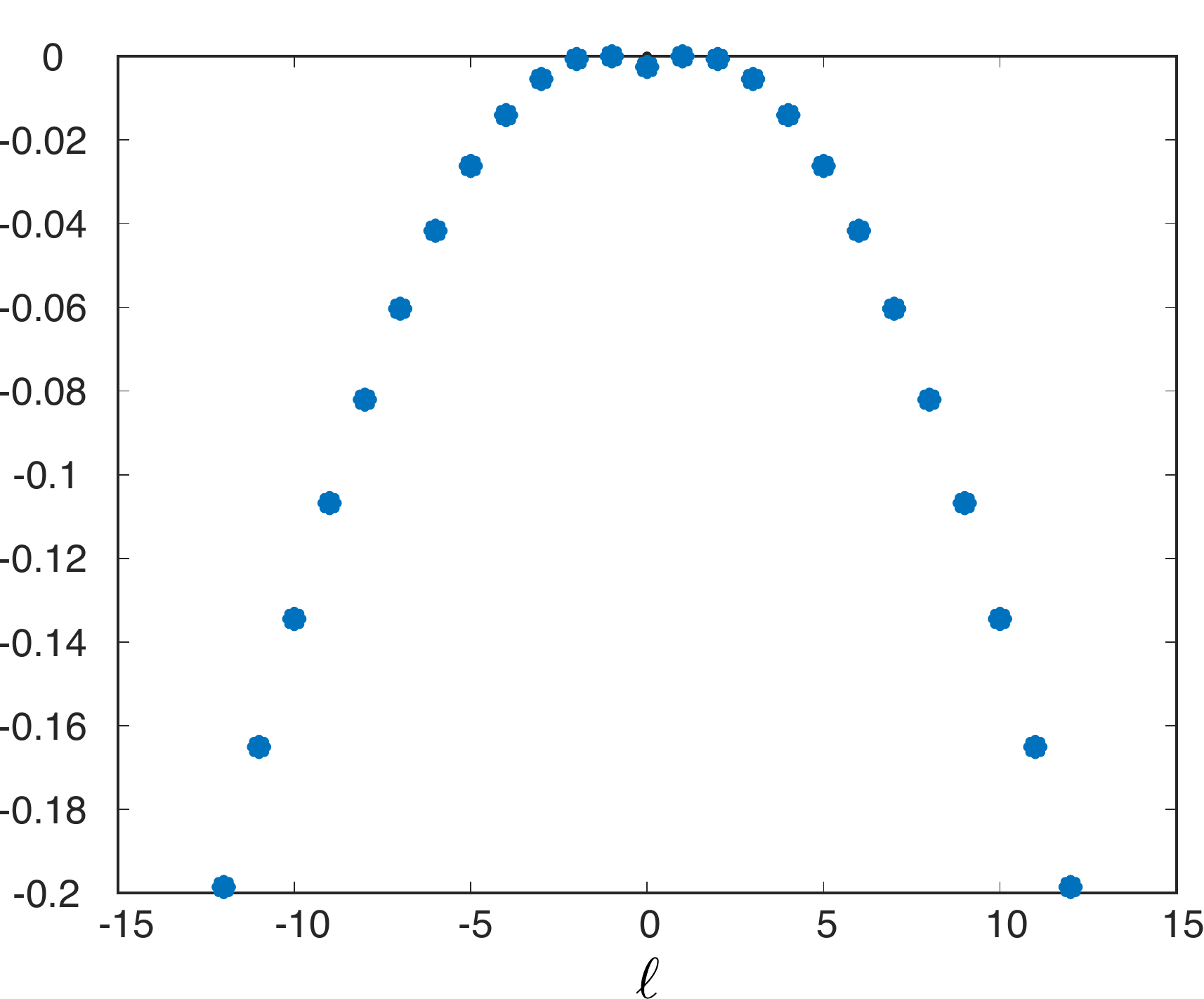}
\end{subfigure}\\

\hspace{.02\textwidth}
\begin{subfigure}{.3 \textwidth}
\centering
\includegraphics[width=1\linewidth]{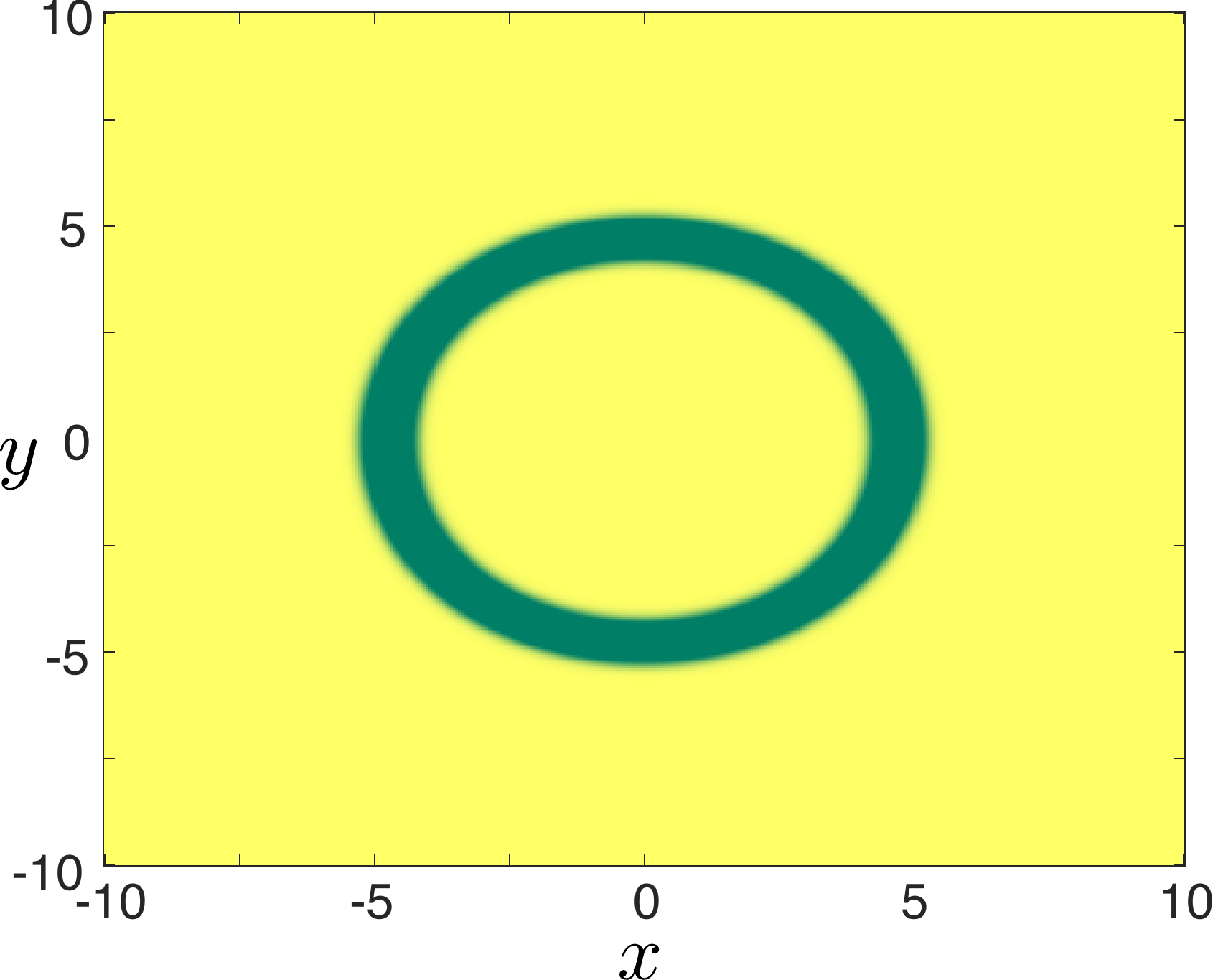}
\end{subfigure}
\hspace{.02\textwidth}
\begin{subfigure}{.3 \textwidth}
\centering
\includegraphics[width=1\linewidth]{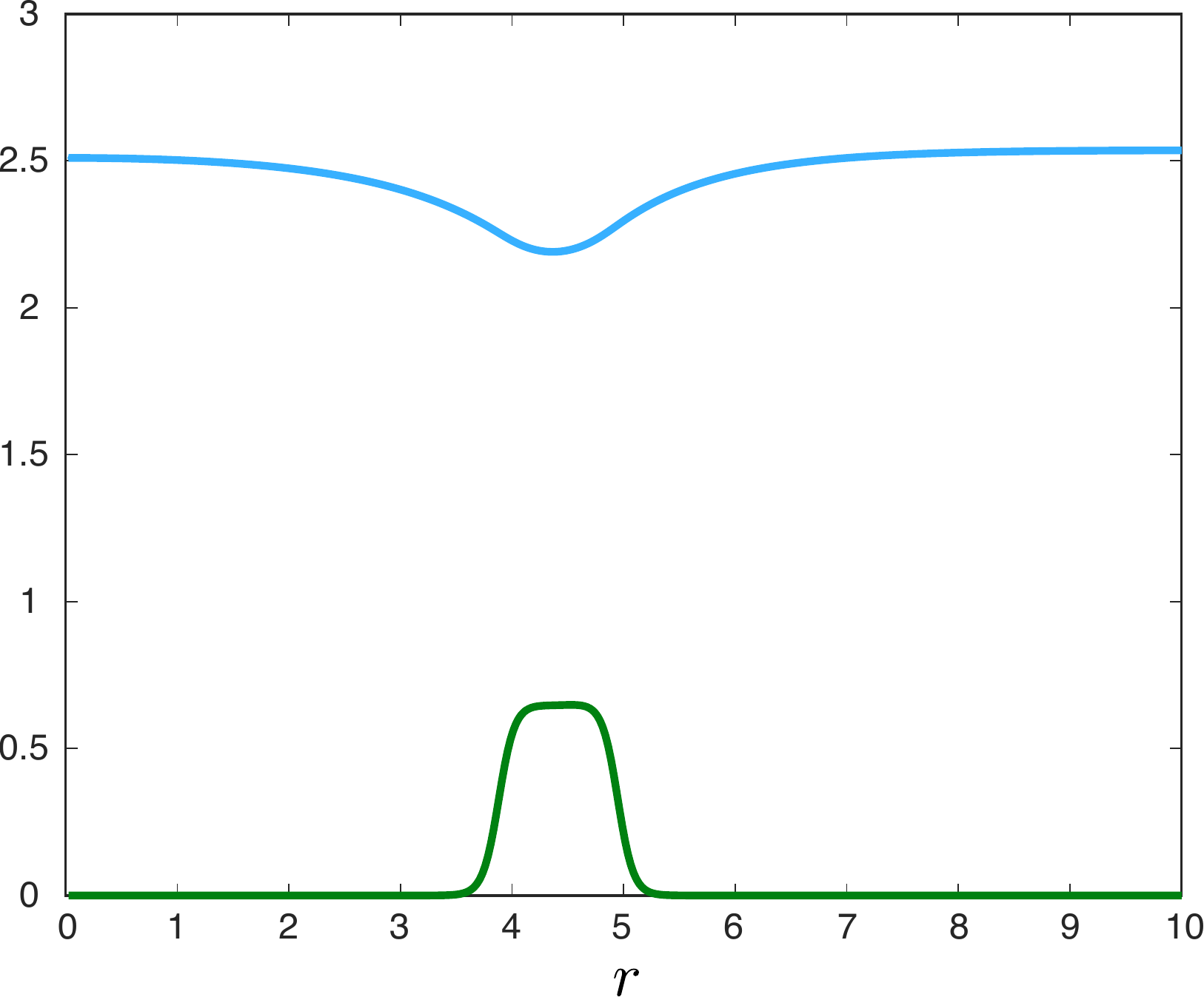}
\end{subfigure}
\hspace{.02\textwidth}
\begin{subfigure}{.3 \textwidth}
\centering
\includegraphics[width=1\linewidth]{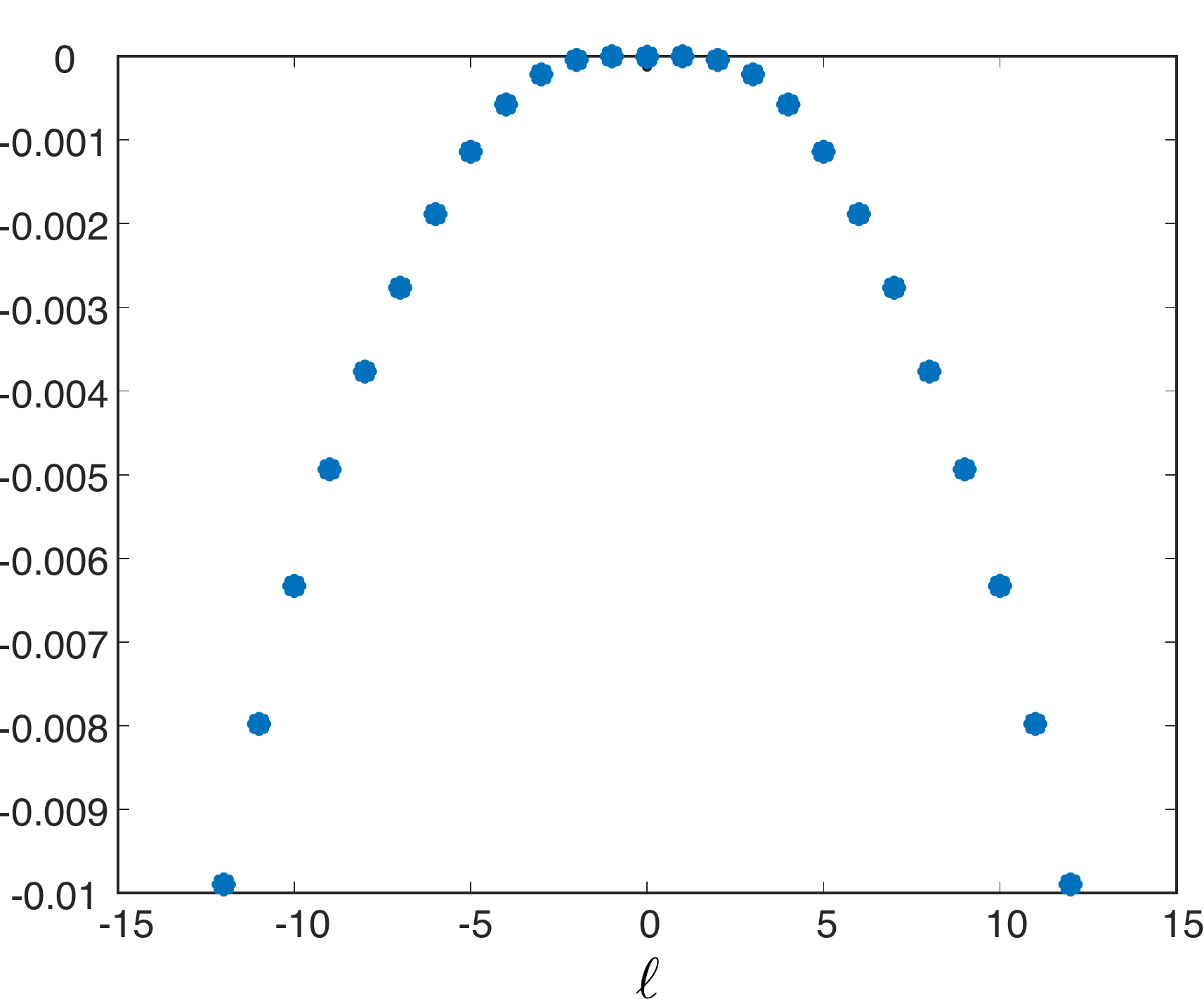}
\end{subfigure}\\

\hspace{.02\textwidth}
\begin{subfigure}{.3 \textwidth}
\centering
\includegraphics[width=1\linewidth]{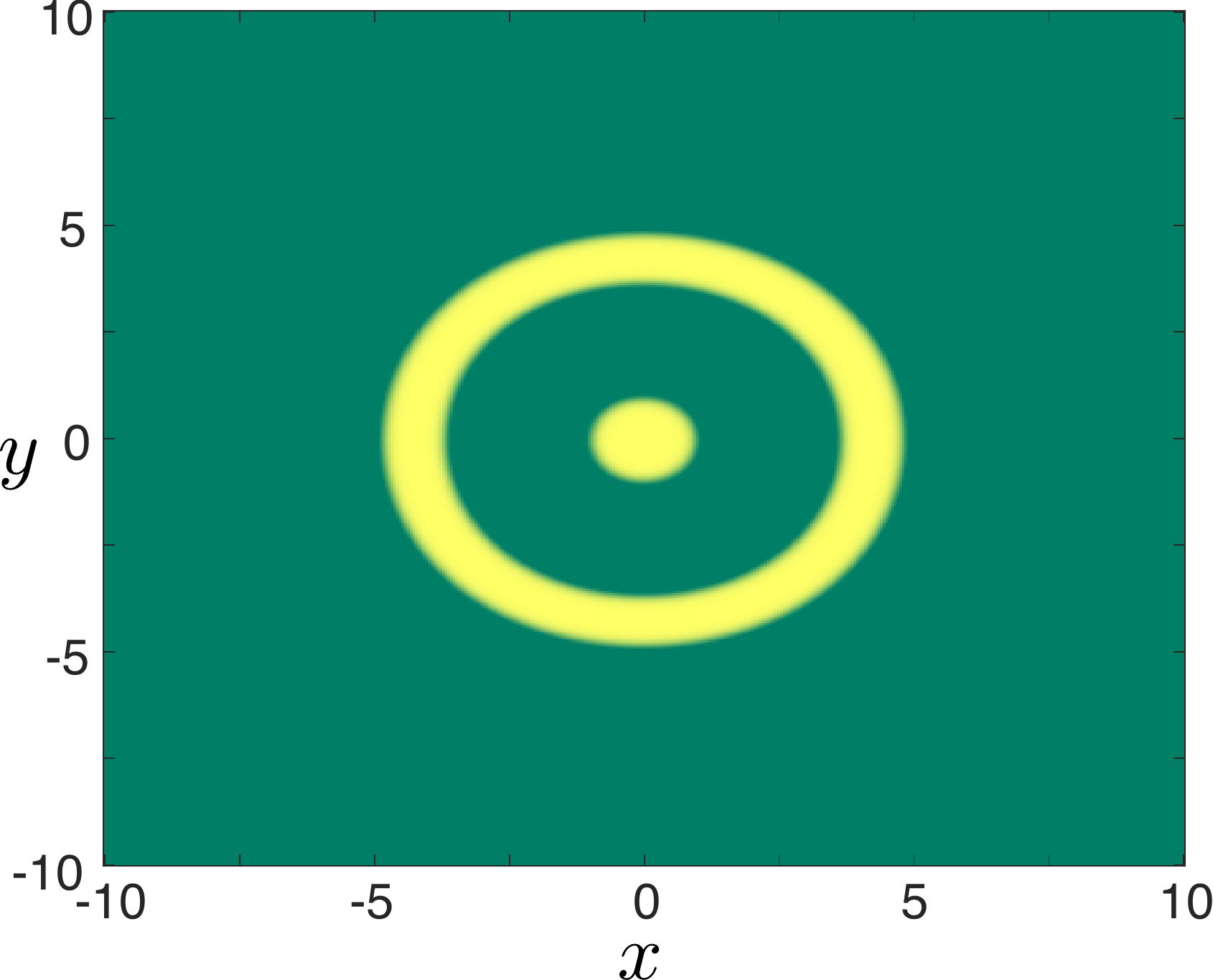}
\end{subfigure}
\hspace{.02\textwidth}
\begin{subfigure}{.3 \textwidth}
\centering
\includegraphics[width=1\linewidth]{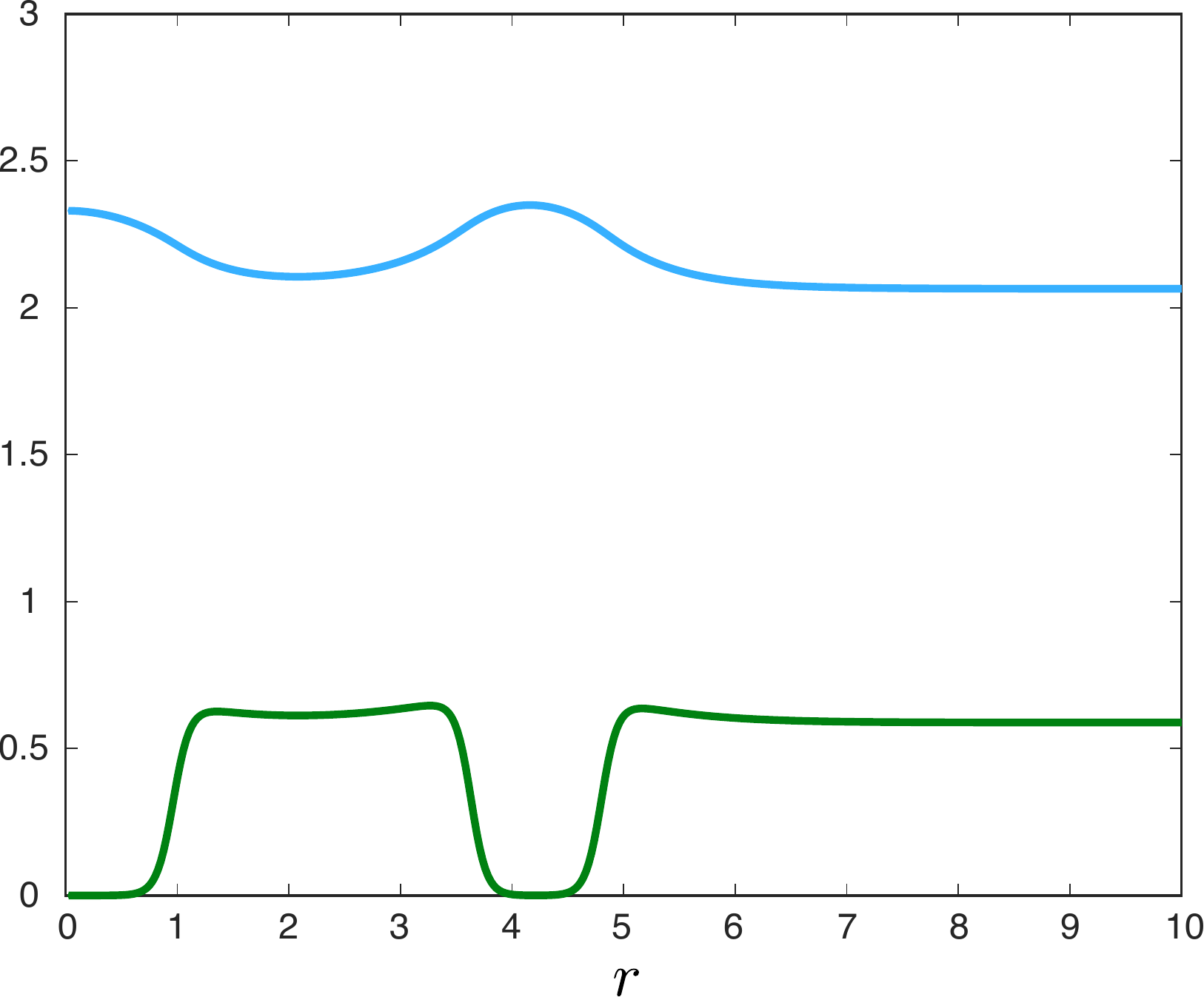}
\end{subfigure}
\hspace{.02\textwidth}
\begin{subfigure}{.3 \textwidth}
\centering
\includegraphics[width=1\linewidth]{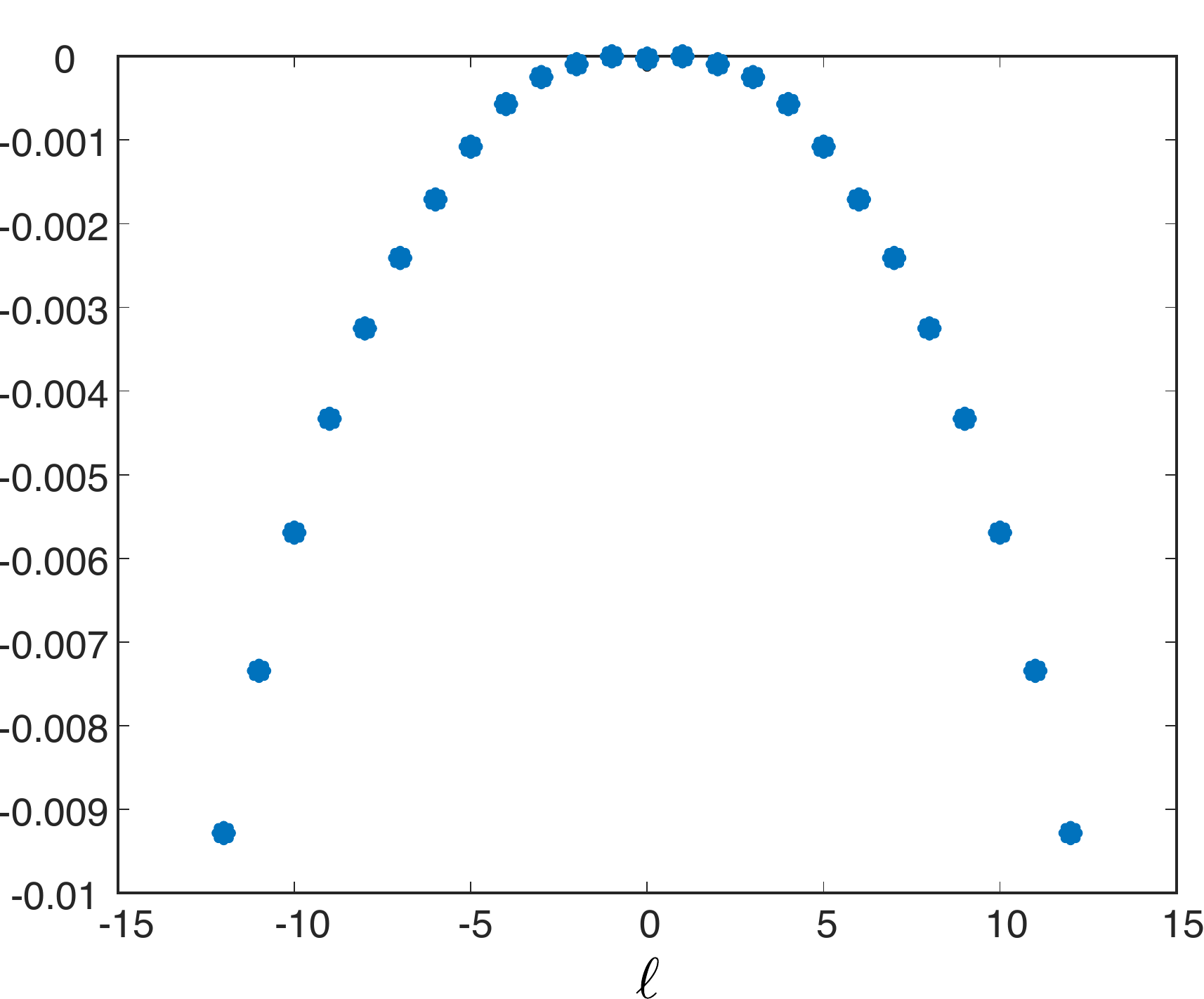}
\end{subfigure}

\caption{Radially symmetric solutions obtained for the parameter values $b=1.0, m=0.5, \delta = 0.05$ for different values of $a$. From top to bottom: spot ($a=2.55$), gap ($=2.765$), ring ($a=2.538$), and target pattern ($a=2.78$). The various solutions survive in direct numerical simulations suggesting they are indeed stable. For each row, the left panel depicts the planar profile obtained by direct numerical simulation in~\eqref{eq:modifiedKlausmeier} using finite differences for spatial discretization with periodic boundary conditions. The middle panel depicts a radial profile obtained by solving~\eqref{eq:klaus_stationary} using finite differences and Neumann boundary conditions, and the right panel depicts the critical eigenvalue $\lambda(\ell)$ for $-12\leq\ell\leq 12$, obtained by linearizing~\eqref{eq:modifiedKlausmeier} about the radial profile and using Matlab's eigs routine.}
\label{fig:stable_radial_solutions}
\end{figure}

\begin{figure}
\hspace{.01\textwidth}
\begin{subfigure}{.45 \textwidth}
\centering
\includegraphics[width=1\linewidth]{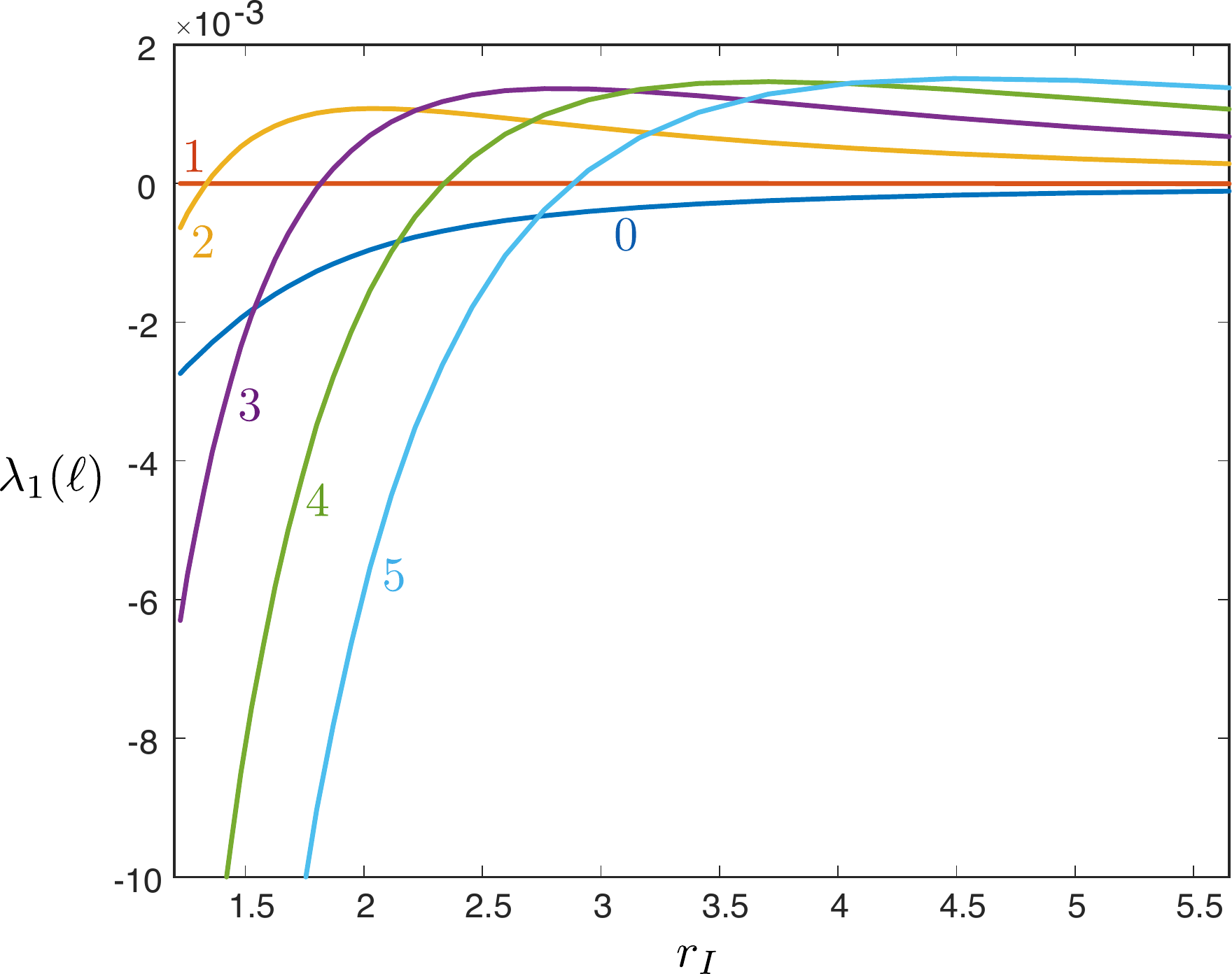}
\end{subfigure}
\hspace{.03\textwidth}
\begin{subfigure}{.45 \textwidth}
\centering
\includegraphics[width=1\linewidth]{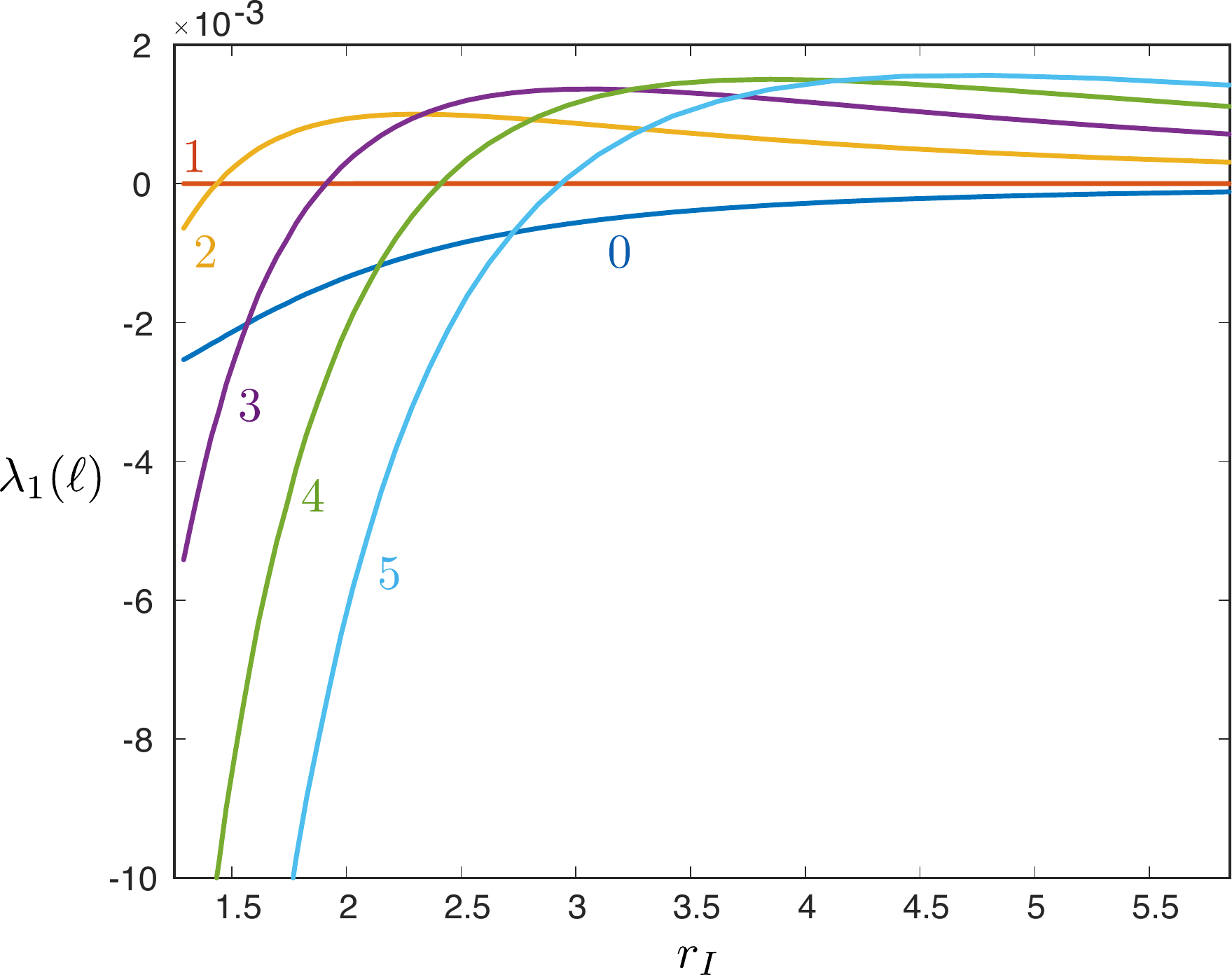}
\end{subfigure}
\caption{ {Shown is a numerical continuation of the eigenvalues $\lambda_1(\ell)$ for $\ell=0,1,2,3,4,5$ for a spot solution (left) and gap solution (right) as a function of the interface radius $r_I$. The curves were obtained by continuing the unstable spot and gap solutions from Figure~\ref{fig:unstable_spot} to the stable spot and gap solutions of Figure~\ref{fig:stable_radial_solutions} by adjusting the parameter $a$ for fixed $(b,m,\delta)$. The interface location $r_I$ was computed at each step by approximating the inflection point of the vegetation profile of the corresponding solution. From the figure, we see that the unstable eigenvalues $\lambda_1(\ell), \ell=2,3,4,5$ eventually stabilize as $r_I$ decreases. }}
\label{fig:evalcont}
\end{figure}

While the spots and gaps of Theorems~\ref{thm:spot_existence}--\ref{thm:gap_existence} are unstable for radii $r_I=\mathcal{O}(1)$ (or larger) with respect to $\delta$, the relation~\eqref{eq:lambda_1_ell_asymp} is no longer valid when $r_I$ is not large, and the analysis in~\S\ref{sec:point_spectrum} is not valid if $r_I=o(1)$ as $\delta\to0$. Hence it may be possible to find smaller spots or gaps which are stable. Figure~\ref{fig:stable_radial_solutions} depicts spot and gap solutions of smaller radii (but nearby in parameter space to those in Figure~\ref{fig:unstable_spot}), for which we see that $\lambda_1(\ell)$ is no longer well approximated by~\eqref{eq:lambda_1_ell_asymp}. We see in this case that $\lambda(\ell)$ is negative aside from the double zero eigenvalue $\lambda(\pm1)=0$ due to translation invariance.  {Figure~\ref{fig:evalcont} shows a continuation of the eigenvalues $\lambda_1(\ell), \ell=0,1,2,3,4,5$ for decreasing $r_I$ for these spot and gap solutions, where we observe that each eigenvalue eventually stabilizes as $r_I$ decreases.} Additionally, as described in~\S\ref{sec:other_radial}, we are similarly able to find (seemingly) stable radially symmetric ring and target patterns (see Figure~\ref{fig:stable_radial_solutions}), which could be obtained in a similar manner to the spots/gaps of Theorems~\ref{thm:spot_existence}--\ref{thm:gap_existence} by constructing solutions with several sharp interfaces at distinct radii $r_i$. 

Obtaining the stability of spots of smaller radii rigorously appears to be a challenging problem, and we leave this to future work; in particular our existence analysis does not immediately extend to this regime. Additionally, unlike prior works which considered the stability of radially symmetric solutions using singular perturbation methods~\cite{vHS,vHS2} in a $3$-component FitzHugh--Nagumo system, the solutions of the linearized equations~\eqref{eq:slow_+_ell} do not have explicit representations in terms of special functions, which makes it difficult to determine $\lambda_1(\ell)$ for smaller values of $r_I$. This is related to the challenges which arise in the existence analysis in~\S\ref{sec:existence} due to the nonlinear reduced flow on the slow manifold $\mathcal{M}^+_\delta$.

Another natural line of further research concerns investigating whether the present insights obtained on specific model (\ref{eq:modifiedKlausmeier}) can be lifted to the general setting of the 2-component singularly perturbed reaction diffusion systems considered in the companion paper \cite{CDLOR}. In that paper, we study the (in)stability of planar fronts with respect to longitudinal perturbations, as we did for (\ref{eq:modifiedKlausmeier}) in section \S\ref{sec:fronts}, and derive two general, and relatively simple, criteria on the emergence of the sideband instability mechanism (that typically leads to finger-like patterns). The present analysis indicates that the same mechanism may drive the (in)stability of large spots and gaps in the general setting of \cite{CDLOR}, although one should not underestimate the technicalities involved in establishing the counterparts of Theorems \ref{thm:spot_existence} and \ref{thm:gap_existence} for the general model. It is less clear from the combination of the present insights and those of \cite{CDLOR}, under which conditions spots and gaps with a radius $r_I$ of $\mathcal{O}(1)$ will be unstable (as is the case for (\ref{eq:modifiedKlausmeier})). The nature of the analysis in section \S\ref{sec:largel} suggests that it is possible to derive a general (in)stability result for spots and gaps with radius $r_I = \mathcal{O}(1)$ against perturbations with $|\ell| \gg 1$. This suggests that also in the general setting, spots and gaps with radius $r_I$ `sufficiently small' are potentially the `most stable' (radially symmetric) localized patterns. Thus, the issue of the existence and stability of spots and gaps of sufficiently small radius is a central question and resolving that question may explain the abundance of `spikes' in the literature on localized patterns in singularly perturbed reaction-diffusion systems -- see~\cite{chen2011stability,kolokolnikov2009spot,wei2013mathematical} and the references therein and Remark \ref{r:spikesandmore}. These spikes have a fully homoclinic nature, in the sense that they are not close to a concatenation of almost heteroclinic orbits: away from the slow manifold $\mathcal{M}^0_\delta$ they only follow the fast (spatial) dynamics, they do not follow the slow flow on a second slow manifold $\mathcal{M}^+_\delta$ -- as is the case for the patterns constructed here (with $r_I = \mathcal{O}(1)$). Thus, by studying spot and gap patterns with radius $r_I$ decreasing from being $\mathcal{O}(1)$ to asymptotically small, one needs to zoom in on the subtle process through which a localized pattern detaches from $\mathcal{M}^+_\delta$ during its jump away from and back to $\mathcal{M}^0_\delta$ -- see also \cite{KBD22}. 

\begin{figure}
\hspace{.05\textwidth}
\begin{subfigure}{.35 \textwidth}
\centering
\includegraphics[width=1\linewidth]{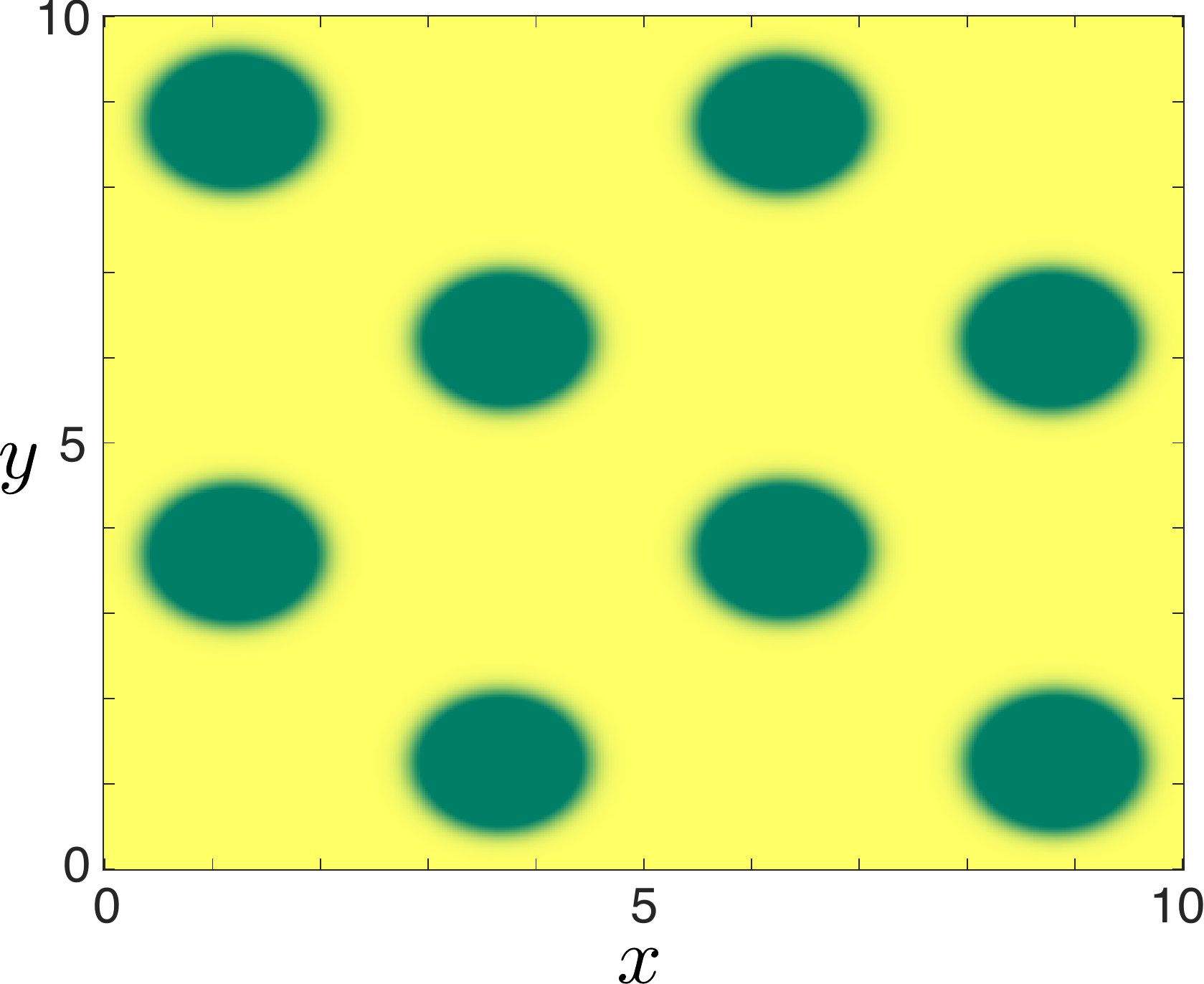}
\end{subfigure}
\hspace{.1\textwidth}
\begin{subfigure}{.35 \textwidth}
\centering
\includegraphics[width=1\linewidth]{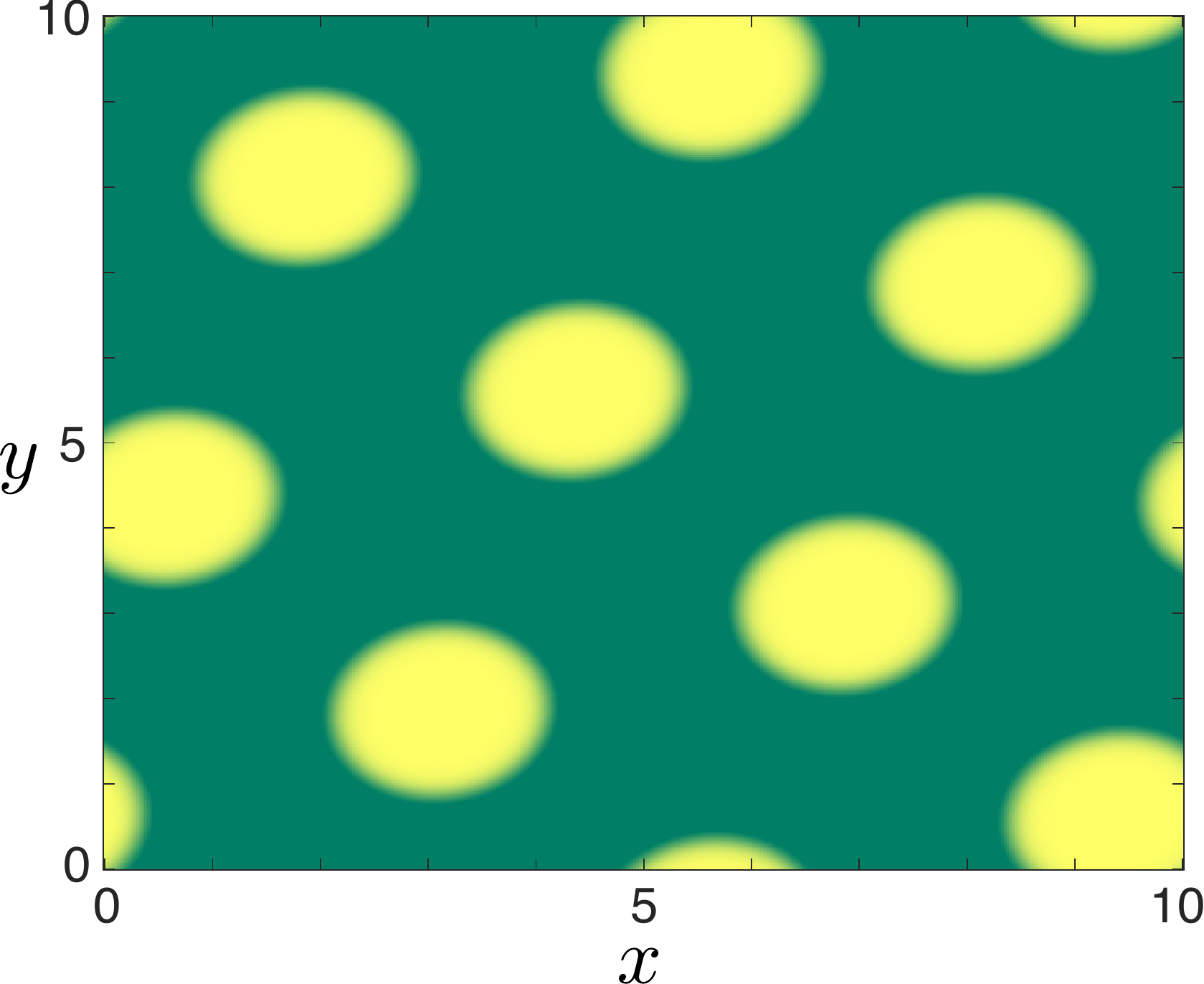}
\end{subfigure}
\hspace{.1\textwidth}
\caption{(Left) Spot pattern solution obtained for $a=2.55, b=1.0, m=0.5, \delta = 0.05$. (Right) Gap pattern solution obtained for $a=2.765, b=1.0, m=0.5, \delta = 0.05$. }
\label{fig:patterns}
\end{figure}

Lastly, we briefly describe the appearance of far-from-onset spot \emph{patterns} in~\eqref{eq:modifiedKlausmeier}. While the analysis of Theorems~\ref{thm:spot_existence}--\ref{thm:gap_existence} only applies to the construction of a single spot or gap solution, we anticipate that one could construct periodic spot or gap patterns by tiling the plane with well-separated copies of the primary spot or gap solution. See Figure~\ref{fig:patterns} for results of direct numerical simulations in~\eqref{eq:modifiedKlausmeier} which result in the appearance of spatially periodic spot and gap lattice patterns. While the construction of such patterns is beyond the scope of this work, we note that a spatial dynamics approach, such as that described in~\cite{scheel2003radially} could be used to construct such large amplitude spot patterns. However, the question of \emph{stability} of the resulting patterns is likely very challenging. 

{\bf Acknowledgment.} AD acknowledges the hospitality
of Arnd Scheel and the School of Mathematics during his stay at the University of Minnesota as Ordway
Visiting Professor.

{\bf Funding.} EB and LL were supported by the NSF REU program through the grant DMS-2016216. PC was supported by the NSF through the grants DMS-2016216 and DMS-2105816.

{\bf Conflict of interest.} The authors have no relevant financial or non-financial interests to disclose.

{\bf Data availability.} The datasets generated during and/or analysed during the current study are available from the corresponding author on reasonable request.

\bibliographystyle{abbrv}
\bibliography{my_bib}

\end{document}